\newtheorem{theorem}{Theorem}[section]
\theoremstyle{plain}
\newtheorem{corollary}[theorem]{Corollary}
\newtheorem{defi}[theorem]{Definition}
\newtheorem{lemma}[theorem]{Lemma}
\def\Int{{\rm int}}
\def\freq{{\rm freq}}
\def\diam{{\rm diam}}
\def\dist{{\rm dist}}
\def\supp{{\rm supp}}
\def\Hk{{\mathcal H}}
\def\th{\theta}
\def\wtil{\widetilde}
\newcommand{\lam}{\lambda}
\newcommand{\gam}{\gamma}
\newcommand{\om}{\omega}
\def\Om{\Omega}
\newcommand{\Gam}{\Gamma}
\newcommand{\R}{{\mathbb R}}
\newcommand{\Q}{{\mathbb Q}}
\newcommand{\Z}{{\mathbb Z}}
\newcommand{\C}{{\mathbb C}}
\def\N{{\mathbb N}}
\newcommand{\Nat}{{\mathbb N}}
\def\Ak{{\mathcal A}}
\def\Bk{{\mathcal B}}
\def\Rk{{\mathcal R}}
\def\Qk{{\mathcal Q}}
\def\Sk{{\mathcal S}}
\def\Lk{{\mathcal L}}
\def\Mk{{\mathcal M}}
\def\Ik{{\mathcal I}}
\def\Pk{{\mathcal P}}
\def\Ck{{\mathcal C}}
\def\Sf{{\sf S}}
\def\Tk{{\mathcal T}}
\def\be{\begin{equation}}
\def\ee{\end{equation}}
\newcommand{\Fk}{{\mathcal F}}
\newcommand{\eps}{{\varepsilon}}
\newcommand{\es}{\emptyset}
\def\ov{\overline}
\newcommand{\const}{{\rm const}}
\def\Frs{{\mathfrak S}}
\begin{document}

\title[Limit theorems for self-similar tilings]{Limit theorems for self-similar tilings}

\author{Alexander I. Bufetov}
\address{Alexander I. Bufetov\\ Steklov Institute, Moscow; 
The Institute for Information Transmission Problems, Moscow; 
National Research University Higher School of Economics, Moscow, Russia; 
Rice University, Houston, TX, USA
}
\email{bufetov@mi.ras.ru}
\author{Boris Solomyak }
\address{Boris Solomyak, Box 354350, Department of Mathematics,
University of Washington, Seattle, WA, USA}
\email{solomyak@uw.edu}

\begin{abstract}
We study deviation of ergodic averages for dynamical systems
given by self-similar tilings on the plane and in higher dimensions.
The main object of our paper
is a special family of
finitely-additive measures for our systems.
An asymptotic formula is given for ergodic integrals in terms of these finitely-additive measures, and, as a corollary,
limit theorems are  obtained for dynamical systems given by self-similar tilings.
\end{abstract}

\date{\today}

\thanks{
A. B. is an Alfred P. Sloan Fellow and a Dynasty Foundation
Fellow.  He is supported in part by the Grant MK 4893.2010.1 of the President of
the Russian Federation, by the Programme on Dynamical Systems and Mathematical Control Theory
of the Presidium of the Russian Academy of Sciences, by RFBR-CNRS grant
10-01-93115 and by the RFBR grant 11-01-00654. \\
\indent B. S.  is supported in part by NSF grant DMS-0968879.
}

\maketitle

\thispagestyle{empty}

\section{Introduction}

We study the deviation of ergodic averages for certain tiling dynamical systems, namely,
translation $\R^d$-actions associated with self-similar tilings.
For $d=1$ asymptotic formulas and limit laws for such deviations were obtained in \cite{Bufetov}.
The main novelty of the $d\ge 2$ case is the appearance of ``boundary effects'' which result in some new phenomena.

We assume that the tilings have translationally finite local complexity, are aperiodic, and repetitive. 
Self-similarity means that there is an
expanding similarity map $\phi:\,\R^d\to \R^d$, such that every ``inflated'' tile can be subdivided into tiles  of the
original tiling, basically, a Markov property.
Given a self-similar tiling, we consider its orbit under translations and its closure in the natural ``local'' topology.
This is a compact metric space, on which $\R^d$ acts by translations.
The resulting dynamical system is known to be minimal and uniquely ergodic.
See the next section for precise definitions and statements.

Let $\Sf$ be the substitution matrix, which is primitive, and let $\theta_1,\ldots,\theta_m$ be its eigenvalues, ordered
by their absolute values:
$\theta_1 > |\theta_2|\ge \cdots \ge |\theta_m|$.
In order to describe our results, we make a simplifying assumption that there are no Jordan blocks associated with 
eigenvalues of absolute value $|\theta_2|$.
The most basic class of functions for which we consider the deviation of ergodic averages is the collection of
characteristic functions for ``cylinder sets'' of tiles. Averaging can be done over balls or cubes of diameter $R$, 
or more general increasing families of Lipschitz domains. A question arises: how can we estimate from above 
the deviation of the ergodic average from the mean? The answer depends on the relation between
$|\theta_2|$ and $\theta_1^{\frac{d-1}{d}}$, see Corollary~\ref{th-main1}. If $|\theta_2|<\theta_1^{\frac{d-1}{d}}$, 
then the deviation term is bounded above by $CR^{d-1}$, which means that the
main contribution comes from the boundary of the domain. On the other hand, if
$|\theta_2|> \theta_1^{\frac{d-1}{d}}$, 
then the  deviation term is bounded above by $CR^\alpha$, where $\alpha = \frac{d\log|\theta_2|}{\log\theta_1}\in (d-1,d)$
(if $|\theta_2|=\theta_1^{\frac{d-1}{d}}$, then there is a logarithmic correction). These deviation bounds are sharp, at least,
in the general case.
There are related recent results by Solomon
\cite{Solomon1,Solomon2} and Aliste-Prieto, Coronel, Gambaudo \cite{ACG,ACG2}, who obtained estimates 
for the rate of convergence to frequency of the number of prototiles per volume for a class of domains. They were
motivated by questions on bi-Lipschitz equivalence and bounded displacement of separated nets, arising
from self-similar tilings, to the lattice. The reader is referred to remarks at the end of Section 4
for a more detailed discussion of these results and how they compare to ours.

Our goal is a finer analysis of the deviation from the ergodic average, which we can perform in the
case $|\theta_2|> \theta_1^{\frac{d-1}{d}}$. 
The main tool here is a family of {\em finitely-additive measures} associated with the system.
It is known that the right and left eigenvectors of $\Sf$ corresponding to the dominant eigenvalue $\theta_1$ give rise
to the unique invariant probability
measure for the tiling dynamical system. The tiling space is locally a product of the ``Euclidean
leaf'' --- an open set in $\R^d$ --- and the transversal, which is a Cantor set with the structure
of a topological Markov chain. The invariant measure is
locally the product of the Lebesgue measure on $\R^d$ and a Markov measure. It turns out that for each eigenvalue $\theta$ of
$\Sf$, that is larger than $\theta_1^{\frac{d-1}{d}}$ in absolute value, 
one can associate two finitely-additive complex (or real signed) measures: one defined on an algebra of sets in $\R^d$ including
Lipschitz domains, and another one defined on the transversal. The latter one yields an invariant finitely-additive measure for the 
dynamical system, if we take a product (locally) with the Lebesgue measure.

Tilings can be viewed as multi-dimensional analogues of substitution
dynamical systems. By the Vershik-Livshits theorem \cite{Vershik,VerLiv}, primitive substitution
dynamical systems can be equivalently realized as Vershik's automorphisms
corresponding to Bratteli diagrams. Upper bounds for the deviation of
ergodic averages for substitution dynamical systems have been obtained
by Adamczewski \cite{Adamcz}; in the related context of interval exchange
transformations and translation flows on flat surfaces, such upper bounds
are due to Zorich \cite{Zorich} and Forni \cite{Forni}. An asymptotic formula for ergodic integrals
for translation flows has been obtained in \cite{Bufetov1},
relying on the construction of a special family of
finitely-additive invariant measures.
In particular, G. Forni's invariant distributions  are expressed through
the finitely-additive measures. Limit theorems for translation flows
follow as a corollary of the asymptotic formula.
We should mention that related objects (minimal cocycles with a scaling property)
for 1-dimensional symbolic substitutions have been studied by Kamae and
collaborators \cite{DKT,Kamae}.

As we said above, the main difficulty of the multi-dimensional case is due
to the more complicated behavior at the boundary.
In the one-dimensional case, finitely-additive measures are directly
constructed on ``Markovian'' arcs and then extended to general arcs by
exhaustion. In the multi-dimensional case, finitely-additive measures are
first constructed on tiles, and then the question arises of their
extension to rectangles, discs and so forth. Note, however, that while the
boundary of an interval consists of two points, the boundary of a
rectangle consists of several arcs, and their contribution need not be
negligible! 

Our first main result (see Theorem~\ref{th-main2}) is an asymptotic formula for the
deviation of the ergodic average in terms of the finitely-additive measures up to an error term, generically of order $R^{d-1}$.
Under the additional assumptions that the tiles are polyhedral,
 the similarity map $\phi$ is a pure dilation, and the second eigenvalue $\theta_2$ is real, simple, and satisfies
 $\theta_2> |\theta_3|$, we prove that
the deviations of ergodic averages obey a limit law: more precisely,
averages on cubes of side $r\lam^n$, appropriately normalized, converge in distribution to a non-degenerate random variable
(see Theorem~\ref{th-limitlaw}).

\section{Preliminaries}

We begin with tiling preliminaries, following \cite{SolJap}, see also \cite{LMS2,Robi,Sadun-book}.
We emphasize that our tilings are {\em translationally
finite}, thus excluding the pinwheel tiling \cite{radin} and its relatives.

\subsection{Tilings.}
Fix a set of types (or colors) labeled by $\{1,\ldots,m\}$.
A {\em tile} in $\R^d$ is defined as a pair $T = (A,i)$ where
$A  = \supp(T)$ (the support of $T$) is a compact set in $\R^d$ which is the
closure of its interior, and $i = \ell(T)\in \{1,\ldots,m\}$ is the type of $T$.
(The tiles are not assumed to
be homeomorphic to the ball or even connected. They may have fractal boundary.)
A {\em tiling} of $\R^d$ is a set $\Tk$ of tiles such that $\R^d = \bigcup
\{\supp(T):\ T\in \Tk\}$ and distinct tiles (or rather, their supports)
have disjoint interiors.

A patch $P$ is a finite set of tiles with disjoint interiors.
The {\em support of a patch} $P$ is defined by
$\supp (P) = \bigcup\{\supp(T):\ T\in P\}$. The {\em diameter of a patch}
$P$ is $\diam(P)=\diam(\supp(P))$. The {\em translate} of a tile $T=(A,i)$
by a vector $y\in \R^d$ is $T+y = (A+y, i)$. The translate
of a patch $P$ is $P+y = \{T+y:\ T\in P\}$. We say that two patches
$P_1,P_2$ are {\em translationally equivalent} if $P_2 = P_1+y$ for some
$g\in \R^d$. Finite subsets of $\Tk$ are called $\Tk$-patches.
For a set $\Om\subset \R^d$ we denote by
$$
\Tk|_\Om= \cup \{T\in \Tk:\,\supp(T)\subset \Om\}
$$
the patch of $\Tk$-tiles whose supports are contained in $\Om$.

\begin{defi} \label{def-flc}
A tiling $\Tk$ has (translational) {\em finite local complexity} (FLC) if
for any $R>0$ there are finitely many $\Tk$-patches of diameter less
than $R$ up to translation equivalence.
\end{defi}

\begin{defi} \label{def-rep}
A tiling $\Tk$ is called {\em repetitive} if for any patch $P\subset \Tk$
there is $R>0$ such that for any $x\in \R^d$ there is a
$\Tk$-patch $P'$ such that $\supp(P')\subset B_R(x)$ and $P'$ is a
translate of $P$.
%The minimal such $R$, denoted $R(P)$, is called the {\em repetitivity radius}
%of $P$.
\end{defi}

\subsection{Tile-substitutions, self-affine tilings.}
We study {\em perfect}
(geometric) substitutions, in which a tile is ``blown up'' by an expanding
linear map and then subdivided. 
A linear map $\phi : \R^d \rightarrow \R^d$ is {\em expansive}
if all its eigenvalues lie outside the unit circle.

\begin{defi}\label{def-subst}
Let $\Ak = \{T_1,\ldots,T_m\}$ be a finite set of tiles in $\R^d$
such that $T_i=(A_i,i)$; we will call them {\em prototiles}.
Denote by $\Pk_{\Ak}$ the set of
patches made of tiles each of which is a translate of one of $T_i$'s.
A map $\omega: \Ak \to \Pk_{\Ak}$ is called a {\em tile-substitution} with
expansion $\phi$ if
\begin{equation} \label{def-sub}
\supp(\om(T_j)) = \phi A_j \ \ \  \mbox{for} \  j\le m.
\end{equation}

In plain language, every expanded prototile $\phi T_j$ can be decomposed into
a union of tiles (which are all translates of the prototiles) with disjoint
interiors.
\end{defi}

The substitution $\om$ is extended to all translates of prototiles by
$\om(y+T_j)= \phi y + \om(T_j)$, and to patches by
$\om(P)=\cup\{\om(T):\ T\in P\}$. This is well-defined due to (\ref{def-sub}).
The substitution $\om$ also acts on the space of tilings whose tiles are
translates of those in $\Ak$.

To the substitution $\om$ we associate its $m \times m$
substitution matrix ${\sf S}$, with ${\sf S}_{ij}$
being the number of tiles of type $i$
in the patch $\om(T_j)$.
The substitution $\om$ is called {\em primitive}
if the substitution matrix is primitive, that is, if there exists $k\in \Nat$
such that ${\sf S}^k$ has only positive entries.
%We say that $\Tk$ is a fixed point of a substitution if $\om(\Tk) = \Tk$.

\begin{defi} \label{def-tsp}
Given a primitive tile-substitution $\om$, let
$X_\om$ be the set of all tilings whose
every patch is a translate of a subpatch of
$\om^n(T_j)$ for some $j\le m$ and $n\in \Nat$. (Of course, one can use
a specific $j$ by primitivity.) The set $X_\om$ is called the
{\em tiling space} corresponding to the substitution.
\end{defi}

\begin{defi} \label{def-saf}
A repetitive tiling $\Tk$, such that $\om(\Tk)=\Tk$ for a primitive tile-substitution $\om$,
is called a {\em self-affine tiling}.
The self-affine tiling is {\em self-similar} if the expansion map of $\om$ is a similitude, that is, for some $\lam >1$ we have
$$|\phi(x)| = \lam |x|,\ \ \mbox{for all}\ x\in \R^d.$$
The number $\lam$ is called the {\em real expansion constant}, or {\em linear dilatation}, of the map $\phi$.
\end{defi}

We say that a tile-substitution $\om$ has FLC
if for any $R>0$ there are finitely many subpatches of $\om^n(T_j)$
for all $j\le m$, $n\in \Nat$, of diameter less
than $R$, up to translation. This obviously implies that all tilings in
$X_\om$ have FLC, and is equivalent to it if the tile-substitution is
primitive.

\medskip

\noindent
{\bf Remark.}
A primitive substitution tiling
space is not necessarily of finite local complexity,
see \cite[p.244]{Ken1} and \cite{Danzer}. Thus we have
to assume FLC explicitly. Recently, tiling systems without FLC were studied in \cite{FraRob,FraSad}.

\begin{lemma}\cite[Prop.\,1.2]{prag} \label{lem-prag}
Let $\om$ be a primitive tile-substitution
of finite local complexity.
Then every tiling $\Sk\in X_\om$ is repetitive.
\end{lemma}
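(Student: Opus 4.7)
The plan is to exploit the two defining properties of the setting: primitivity, which propagates a single patch into all deep enough supertiles, and the fact that $\Sk\in X_\om$, which asserts that every finite patch of $\Sk$ embeds, up to translation, into some $\om^N(T_{j})$. Together these should yield a universal repetition radius $R=R(P)$ for any chosen $\Sk$-patch $P$.

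\textbf{Steps.} First I would fix $P\subset\Sk$ and, using the definition of $X_\om$, find $n_0$, $j_0$, and $v$ with $P+v\subset\om^{n_0}(T_{j_0})$. Then, by primitivity, I would choose $k$ such that $\Sf^k$ is strictly positive entrywise, so that $\om^k(T_i)$ contains a translate of $T_{j_0}$ for every $i$; consequently, with $n:=n_0+k$, the supertile $\om^n(T_i)=\om^{n_0}(\om^k(T_i))$ contains a translate of $\om^{n_0}(T_{j_0})$ and hence a translate of $P$, \emph{regardless of} $i$. Set $D:=\max_i\diam\om^n(T_i)$, let $D_*$ be the maximal prototile diameter, and put $R:=D+2D_*$. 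Given any $x\in\R^d$, I would form the patch $Q:=\{T\in\Sk:\supp T\cap B_R(x)\ne\es\}$, which satisfies $B_R(x)\subset\supp Q\subset B_{R+D_*}(x)$, and invoke $\Sk\in X_\om$ to get $y\in\R^d$, $N\ge n$, and $j_1$ with $Q+y\subset\om^N(T_{j_1})$. The identity $\om^N(T_{j_1})=\om^n(\om^{N-n}(T_{j_1}))$ partitions $\om^N(T_{j_1})$ into level-$n$ supertiles; one of them, call it $S$, contains the point $x+y$, and because $\diam S\le D<R$, its support lies inside $B_R(x)+y\subset\supp(Q+y)$.

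\textbf{Main obstacle.} The delicate step I expect to be the main obstacle is upgrading ``$\supp S\subset\supp(Q+y)$'' to ``$S$ is a sub-collection of the tiles of $Q+y$''. To settle this, I would note that $S$ and $Q+y$ are both subpatches of the common patch $\om^N(T_{j_1})$, whose tiles have pairwise disjoint interiors, and that $Q+y$ already tiles $\supp(Q+y)$; any tile of $S$ whose support sits inside $\supp(Q+y)$ must then coincide with one of the tiles of $Q+y$. Once this is granted, $S-y$ is a subpatch of $\Sk$ containing a translate of $P$ whose support lies in $B_R(x)$, which exhibits $R$ as a repetition radius for $P$ independent of $x\in\R^d$; since $P$ was arbitrary, $\Sk$ is repetitive. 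All remaining work is routine bookkeeping on diameters, chosen to ensure that the level-$n$ supertile $S$ fits strictly inside $B_R(x)+y$ with the margin required by FLC.
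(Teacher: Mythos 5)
Your argument is correct and essentially complete. The paper itself offers no proof of this lemma --- it is imported by citation from Praggastis \cite[Prop.\,1.2]{prag} --- so there is nothing internal to compare against; your write-up is a self-contained proof of the standard kind. The two pivots are exactly right: (i) primitivity guarantees that for $n=n_0+k$ (with $\Sf^k$ strictly positive) \emph{every} level-$n$ supertile contains a translate of $P$, and (ii) any ball of radius $R=D+2D_*$ in $\Sk$ contains the support of a full level-$n$ supertile of a suitable high-order supertile covering that ball, which you then pull back into $\Sk$. Your resolution of the ``main obstacle'' is also sound: a tile $T$ of $S$ with $\supp T\subset\supp(Q+y)$ has a nonempty open interior which cannot be contained in the nowhere dense finite union of boundaries of the tiles of $Q+y$, so it shares an interior point with some tile of $Q+y$, and since both live in the single patch $\om^N(T_{j_1})$ with pairwise disjoint interiors they coincide. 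Two small points you gloss over but which are easily supplied: the upgrade to $N\ge n$ when embedding $Q+y$ into a supertile uses primitivity again (a translate of $\om^N(T_{j_1})$ sits inside $\om^{N+k}(T_{j'})$ for every $j'$ once $\Sf^k>0$), and the finiteness of $Q$ follows from the positivity of the minimal prototile volume. Note also that your proof never genuinely invokes FLC; for repetitivity in the form stated here that is fine, since primitivity and the definition of $X_\om$ suffice.
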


\subsection{Tile boundaries} For a tiling $\Tk$ denote by 
$$
\partial \Tk = \bigcup_{T\in \Tk} \partial (\supp(T))
$$
the union of the boundaries of all tile supports.

By the definition of a tiling, $\partial \Tk$ is nowhere dense in $\R^d$. For self-affine tilings, the boundary has zero Lebesgue measure.

\begin{lemma}\cite[Prop.\,1.1]{prag} \label{lem-bound1}
Let $\Tk$ be a self-affine tiling of $\R^d$. Then $\Lk^d(\partial \Tk)=0$.
\end{lemma}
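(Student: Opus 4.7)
My plan is to reduce the claim to showing $\Lk^d(\partial T_j)=0$ for each of the $m$ prototiles. Since every tile of $\Tk$ is a translate of some $T_j$ and $\Tk$ is countable (tiles have positive volume and bounded diameter), $\partial \Tk$ would then be a countable union of Lebesgue null sets.

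Set $\beta_j := \Lk^d(\partial T_j)$. The self-affinity $\om(\Tk)=\Tk$, together with the decomposition of $\phi T_j = \supp(\om(T_j))$ into translates of prototiles with pairwise disjoint interiors, yields
$$\partial(\phi T_j) \;=\; \phi(\partial T_j) \;\subset\; \bigcup_{T'\in \om(T_j)} \partial T'.$$
Since $\phi$ is a linear bijection, $\Lk^d(\phi(\partial T_j)) = |\det \phi|\,\beta_j$. Equating the volume of $\phi T_j$ with the sum of volumes of its subtiles gives $|\det\phi|\,a = \Sf^T a$ for the prototile volume vector $a=(a_j)$, and Perron--Frobenius applied to the strictly positive vector $a$ identifies $|\det \phi|=\theta_1$. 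Consequently
$$\theta_1 \beta_j \;\le\; \sum_i \Sf_{ij}\beta_i, \qquad \text{equivalently} \qquad \Sf^T \beta \;\ge\; \theta_1 \beta \ \text{componentwise.}$$
Pairing with the strictly positive right Perron eigenvector $w$ of $\Sf$ (which satisfies $w^T\Sf^T = \theta_1 w^T$) gives $w^T(\Sf^T\beta - \theta_1\beta)=0$; with each summand non-negative and $w>0$, this forces $\Sf^T\beta = \theta_1 \beta$. By uniqueness of the non-negative Perron direction for a primitive matrix, $\beta = c\,p$ for some $c\ge 0$ and the Perron eigenvector $p>0$.

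Suppose for contradiction that $c>0$, so $\beta$ is componentwise strictly positive. Applying the same inequality chain to the iterated substitution $\om^n$ (whose matrix is $\Sf^n$ with dominant eigenvalue $\theta_1^n = |\det \phi^n|$) yields
$$\theta_1^n \beta_j \;=\; \Lk^d(\partial(\phi^n T_j)) \;\le\; \Lk^d\!\Bigl(\bigcup_{T'\in \om^n(T_j)}\!\partial T'\Bigr) \;\le\; \sum_i \Sf^n_{ij}\,\beta_i \;=\; \theta_1^n\beta_j,$$
so every inequality is an equality and $\bigcup_{T'}\partial T' \setminus \partial(\phi^n T_j)$ has Lebesgue measure zero. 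In particular, any tile $T''\in \om^n(T_j)$ that lies strictly inside $\Int(\phi^n T_j)$ satisfies $\Lk^d(\partial T'') = \beta_{\ell(T'')}=0$. The one genuinely subtle step is showing that for $n$ large enough, strictly interior tiles of \emph{every} prototile type occur in $\om^n(T_j)$; this follows from primitivity (every type appears once $\Sf^n$ is strictly positive) combined with a crude surface-to-volume count (at most $O(\theta_1^{n(d-1)/d})$ of the $\asymp \theta_1^n$ tiles of $\om^n(T_j)$ can meet $\partial(\phi^n T_j)$, a negligible fraction for large $n$). This forces $\beta_i=0$ for every $i$, contradicting $c>0$, and the proof is complete.
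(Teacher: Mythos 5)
The paper gives no proof of this lemma (it is quoted from Praggastis), so your argument has to stand on its own. Its core mechanism is sound and is the natural one: the vector $\beta=(\Lk^d(\partial A_j))_{j\le m}$ satisfies $\Sf^t\beta\ge\th_1\beta$ componentwise because $\partial(\phi A_j)\subset\bigcup_{T'\in\om(T_j)}\partial(\supp(T'))$; pairing with the positive left Perron eigenvector upgrades this to equality; and equality throughout the chain for $\om^n(T_j)$ forces $\beta_i=0$ for every type $i$ realized by a tile of $\om^n(T_j)$ lying in $\Int(\phi^nA_j)$. But two supporting steps are not justified as written. The lesser one: you obtain $|\det\phi|=\th_1$ by ``equating the volume of $\phi T_j$ with the sum of volumes of its subtiles,'' and that equation is precisely the assertion that the pairwise overlaps (which lie in the tile boundaries) are null --- i.e.\ it presupposes the lemma. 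This is repairable: subadditivity gives $|\det\phi|^n a_j\le\sum_i\Sf^n_{ij}a_i$ (with $a_j=\Lk^d(A_j)$), while disjointness of the \emph{interiors} gives $\sum_i\Sf^n_{ij}\Lk^d(\Int A_i)\le|\det\phi|^n a_j$ with $\Lk^d(\Int A_i)>0$; since $\Sf^n_{ij}\asymp\th_1^n$ by primitivity, these sandwich $|\det\phi|=\th_1$ without circularity.

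The more serious gap is the step you yourself flag as subtle. The count ``at most $O(\th_1^{n(d-1)/d})$ of the tiles of $\om^n(T_j)$ meet $\partial(\phi^nA_j)$'' is exactly the kind of surface-to-volume estimate that requires $\partial A_j$ to be $(d-1)$-rectifiable, as in (\ref{eq-gmt}); the paper explicitly allows fractal tile boundaries, and under your contradiction hypothesis $\Lk^d(\partial A_j)>0$ there is no a priori reason the boundary-touching tiles form a negligible fraction of all tiles. Fortunately the conclusion you need is much weaker and has a clean proof that avoids any metric information about the boundary: since $\Int A_j\ne\es$ and $\phi$ is expansive, for $n$ large $\Int(\phi^nA_j)$ contains a closed ball of radius $d_{\max}$ about some point $y$, so the tile of $\om^n(T_j)$ containing $y$ has its support inside $\Int(\phi^nA_j)$; call it $T'''$. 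Choosing $k$ with $\Sf^k>0$, the sub-patch $\om^k(T''')\subset\om^{n+k}(T_j)$ has support $\phi^k(\supp(T'''))\subset\phi^k(\Int(\phi^nA_j))=\Int(\phi^{n+k}A_j)$ and contains tiles of every type, which is all your equality argument requires. With these two repairs the proof is complete.
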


There is also a kind of ``geometric rigidity'': if $\Tk$ is self-affine and
$\partial \Tk$ is piecewise smooth (even piecewise Lipschitz), then it has to be polyhedral.
This follows from the fact that $\partial \Tk$ is invariant under the expanding linear map.

\subsection{Tiling topology and tiling dynamical system}
We use a tiling metric on $X_\om$, which is based on a
simple idea: two tilings are close if after a small translation they agree
on a large ball around the origin. There is more than one way
to make this precise, and our formal definition is as follows:
For $\Tk_1,\Tk_2 \in X_\om$ let
$$
\widetilde{d}(\Tk_1,\Tk_2) := \inf\{r \in (0,2^{-1/2}):
\ \exists\,y,\ |y| \le r,\ \ 
\supp((\Tk_1-y) \cap \Tk_2) \supset B_{1/r}(0)\}.
$$
%$$
%\Tk_1-g \mbox{ \ agrees with $\Tk_2$ on $B_{1/r}(0)$} \}.
%$$
Then
$
d(\Tk_1,\Tk_2) = \min\{2^{-1/2},\widetilde{d}(\Tk_1,\Tk_2)\}
$
is a metric on $X_\om$.

\begin{theorem}\cite{Rud} {\em (see also \cite{Robi}).}
$(X_\om,d)$ is a complete metric space.
It is compact, whenever the space has finite local complexity.
The action of $\R^d$ by translations on
$X_\om$, given by  $\Sk\mapsto \Sk-y$, $y\in \R^d$, is continuous.
\end{theorem}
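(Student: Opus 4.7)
The plan is to establish the three assertions in turn: that $d$ is a metric and $(X_\om,d)$ is complete, that FLC implies compactness, and that the $\R^d$-action is continuous.

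First, to check that $d$ is a metric, the non-trivial points are the identity of indiscernibles and the triangle inequality. If $d(\Tk_1,\Tk_2)=0$, I would extract translations $y_n\to 0$ with $(\Tk_1-y_n)\cap\Tk_2$ containing all tiles supported in $B_n(0)$; since tiles are closures of their interiors and distinct tiles have disjoint interiors, passing to the limit forces a perfect matching of tiles, i.e.\ $\Tk_1=\Tk_2$. Symmetry is immediate by replacing $y$ with $-y$. For the triangle inequality, given witnesses $y_{12}, y_{23}$ for $\wtil{d}(\Tk_1,\Tk_2)<r_{12}$ and $\wtil{d}(\Tk_2,\Tk_3)<r_{23}$, the vector $y_{12}+y_{23}$ makes $\Tk_1-(y_{12}+y_{23})$ agree with $\Tk_3$ on $B_\rho(0)$ where $\rho$ is controlled from below by $\min\{1/r_{12},1/r_{23}\}-|y_{12}|-|y_{23}|$; choosing $r=r_{12}+r_{23}$ and using the truncation at $2^{-1/2}$ to dispose of the vacuous large-distance case yields $d(\Tk_1,\Tk_3)\le d(\Tk_1,\Tk_2)+d(\Tk_2,\Tk_3)$.

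For completeness, I take a Cauchy sequence $\Tk_n$. Fix $r_k\downarrow 0$; for each $k$ there exist vectors $y_{n,m}^{(k)}$ of norm at most $r_k$ such that $\Tk_n-y_{n,m}^{(k)}$ and $\Tk_m$ agree on $B_{1/r_k}(0)$ for all large $n,m$. A diagonal extraction produces a subsequence $\Tk_{n_k}$ and a sequence of compatible translations $z_k\in\R^d$ (with $z_k-z_{k-1}\to 0$ summable) so that $\Tk_{n_k}-z_k$ stabilizes on each bounded region; the resulting pointwise limit is a well-defined tiling $\Tk$ to which $\Tk_n$ converges in $d$.

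For compactness under FLC: given $\Tk_n\in X_\om$, finite local complexity provides, for each $R$, finitely many $\Tk_n$-patches of diameter $\le R$ up to translation. Hence after a small translation, the restrictions $\Tk_n|_{B_R(0)}$ take only finitely many translational equivalence classes. A standard diagonal argument over $R=1,2,3,\dots$ produces a subsequence that is Cauchy in $d$, and the completeness step above yields a limit $\Tk$, which lies in $X_\om$ because the closed condition of Definition~\ref{def-tsp} passes to the limit. Continuity of $(\by,\Tk)\mapsto \Tk-\by$ is then a short computation: if $d(\Tk_1,\Tk_2)<r$ with translation witness $y_0$, then for small $|\by_1-\by_2|$ the vector $y_0+\by_1-\by_2$ witnesses closeness of $\Tk_1-\by_1$ and $\Tk_2-\by_2$ on a slightly shrunken ball.

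The main obstacle in this proof is bookkeeping in the diagonal extractions: the metric $d$ couples the tolerance translation $y$ with the inverse radius $1/r$, so one must arrange that successive translations have summable norms (so their total stays small) while simultaneously forcing the agreement regions to exhaust $\R^d$. Apart from this, the argument is essentially the standard ``local isomorphism $\Rightarrow$ compactness'' scheme for tiling spaces, and the FLC hypothesis is what replaces the equicontinuity step in a classical Arzel\`a--Ascoli argument.
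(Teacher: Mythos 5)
This theorem is quoted from the literature (Rudolph, Robinson): the paper gives no proof of it, so there is nothing internal to compare your argument against. Your sketch follows the standard route that those references take --- verify the metric axioms using the cutoff at $2^{-1/2}$, prove completeness by a diagonal extraction of stabilizing patches, get compactness from FLC plus completeness, and check continuity of the translation action directly --- and as an outline it is sound. Two places deserve more care than you give them. First, for the metric axioms: with the paper's one-sided formulation (only $\Tk_1$ is translated), replacing $y$ by $-y$ shows that $(\Tk_2+y)\cap\Tk_1$ covers $B_{1/r}(0)+y$, which contains only $B_{1/r-r}(0)$, not $B_{1/r}(0)$; so symmetry is not ``immediate'' but requires either passing to a slightly larger $r$ (and checking the infimum is unaffected, which uses $r<2^{-1/2}$) or working with the symmetrized two-translation definition. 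The same bookkeeping, $1/r_{12}-r_{23}\ge 1/(r_{12}+r_{23})$ precisely because $r_{12}(r_{12}+r_{23})<1$ for $r_{12},r_{23}<2^{-1/2}$, is what makes your triangle-inequality step work, and it is worth writing out since naive versions of this metric genuinely fail the triangle inequality. Second, in the compactness argument, FLC gives finitely many patch types on $B_R(0)$ up to translation, but you must also extract convergence of the \emph{offset} of the chosen patch relative to the origin (a Bolzano--Weierstrass step on a bounded set of translation vectors) before the diagonal argument produces a Cauchy subsequence; ``finitely many equivalence classes'' alone does not give this. Neither point is a fatal gap --- both are standard --- but they are exactly the details that distinguish a proof from a plausibility argument here.
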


This continuous translation action $(X_\om,\R^d)$ is called the
(topological) tiling dynamical system associated with the tile-substitution.

\begin{theorem} \label{th-min}
If $\om$ is a primitive tiling substitution with FLC, then
the dynamical system $(X_\om,\R^d)$ is minimal, that is, for every
$\Sk\in X_\om$, the orbit $\{\Sk-y:\ y\in \R^d\}$ is dense in $X_\om$.
\end{theorem}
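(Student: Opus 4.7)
The plan is to reduce minimality to repetitivity (Lemma~\ref{lem-prag}) by first producing a self-affine fixed point of some iterate of $\om$, showing that its orbit closure is a minimal set, and then verifying that every tiling in $X_\om$ lies in that minimal set via a common-language argument.

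\textbf{Step 1 (Self-affine fixed point).} First I would construct $\Tk_0 \in X_\om$ with $\om^k(\Tk_0) = \Tk_0$ for some $k \ge 1$. Choose $k$ so that $\Sf^k$ has strictly positive entries (by primitivity); then $\om^k(T_i)$ contains a translate $T_i + v$ of $T_i$ for some prototile index $i$ and some $v \in \R^d$. Since $\phi$ is expansive, $I - \phi^k$ is invertible; translating $T_i$ by the unique $y$ with $(I - \phi^k) y = v$ produces a tile $T$ satisfying $T \subset \om^k(T)$. Iterating yields a nested sequence of patches $T \subset \om^k(T) \subset \om^{2k}(T) \subset \cdots$ whose supports exhaust $\R^d$, giving a fixed tiling $\Tk_0 \in X_\om$. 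Replacing $\om$ by $\om^k$ (which leaves $X_\om$ and the $\R^d$-action unchanged), I may assume $\om(\Tk_0) = \Tk_0$.

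\textbf{Step 2 (Orbit closure of $\Tk_0$ is minimal).} From the identity $\Tk_0 = \om^n(\Tk_0) = \bigcup_{T \in \Tk_0} \om^n(T)$, every tile of $\Tk_0$ sits inside a super-tile which is a translate of $\om^n(T_i)$ for some $i$. Since primitivity forces every type to occur in $\Tk_0$, each patch of the form $\om^n(T_i)$ appears as a subpatch of $\Tk_0$ up to translation. By Lemma~\ref{lem-prag}, $\Tk_0$ is repetitive; a standard argument then shows that the orbit closure $M_0 := \overline{\{\Tk_0 - y : y \in \R^d\}}$ is a minimal closed $\R^d$-invariant subset of the compact space $X_\om$.

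\textbf{Step 3 (Every $\Sk \in X_\om$ lies in $M_0$).} Given any $\Sk \in X_\om$ and any $R > 0$, Definition~\ref{def-tsp} realises the patch $\Sk|_{B_R(0)}$ as a translate of a subpatch of some $\om^{n(R)}(T_{j(R)})$. By Step 2 that larger patch occurs in $\Tk_0$ up to translation, so there exists $y_R \in \R^d$ for which $(\Tk_0 - y_R)|_{B_R(0)} = \Sk|_{B_R(0)}$. Letting $R \to \infty$ yields $\Tk_0 - y_R \to \Sk$ in the tiling metric, so $\Sk \in M_0$; hence $X_\om = M_0$ and the action is minimal.

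\textbf{Main obstacle.} The most delicate ingredient is the implication ``repetitivity of $\Tk_0 \Rightarrow$ minimality of $M_0$'' used in Step 2: given any $\Sk' \in M_0$, one must show $\Tk_0$ lies in the orbit closure of $\Sk'$, which requires that the bounded-gap recurrence of each finite patch of $\Tk_0$ in $\Tk_0$ transfers to bounded-gap recurrence in $\Sk'$, exploiting the fact that any such $\Sk'$ agrees with a translate of $\Tk_0$ on arbitrarily large central balls. The fixed-point construction in Step 1, while classical, also requires verifying that the nested super-tiles exhaust $\R^d$, which uses expansivity of $\phi$ together with FLC.
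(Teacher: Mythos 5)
Your route differs from the paper's: the paper deduces the theorem in one line from Lemma~\ref{lem-prag} (every tiling in $X_\om$ is repetitive) together with Gottschalk's theorem, referring to \cite[Sec.\,5]{Robi} for the details, whereas you first manufacture a tiling fixed by a power of $\om$ and then run a language argument. That is a legitimate classical alternative; note, though, that the ``standard argument'' you invoke in Step 2 (repetitive point $\Rightarrow$ minimal orbit closure) \emph{is} Gottschalk's theorem, so in the end you use the same key input as the paper, applied to one well-chosen tiling rather than to all of them. Steps 2 and 3 are sound in outline.

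The genuine gap is in Step 1, in the assertion that the nested super-tiles exhaust $\R^d$. With $T = T_i - y$ and $y$ solving $(\phi^k - I)y = v$, the point $y$ is the fixed point of the contraction $F(x) = \phi^{-k}(x+v)$, which maps $A_i$ into itself; hence $y \in A_i$, but nothing forces $y \in \Int(A_i)$. Since $\supp(\om^{nk}(T)) = \phi^{nk}(A_i - y)$ and $\phi^{nk}$ fixes the origin, the union $\bigcup_n \phi^{nk}(A_i - y)$ equals $\R^d$ if and only if $0 \in \Int(A_i - y)$, i.e.\ $y \in \Int(A_i)$. If $y$ lies on $\partial A_i$ the union is a proper cone or half-space: for the Fibonacci tile substitution in $d=1$ the recurrent copy of $T_a$ sits at the left endpoint of $\om(T_a)$, the fixed point is that endpoint, and the super-tiles fill only a half-line. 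Expansivity and FLC, which you name as the needed ingredients, do not repair this. The standard fix is to take $k$ large enough that $\om^k(T_i)$ contains a translate $T_i + v$ with $A_i + v \subset \Int(\phi^k A_i)$ (possible by primitivity, by locating an occurrence of $T_i$ inside a lower-order super-tile of $\om^k(T_i)$ that stays away from $\partial(\phi^k A_i)$); then $F(A_i) \subset \Int(A_i)$, so $y \in \Int(A_i)$ and the exhaustion goes through. (Alternatively one can bypass Step 1 entirely, as the paper does, since by Lemma~\ref{lem-prag} \emph{every} tiling of $X_\om$ is repetitive and Gottschalk applies to any of them.) Two harmless slips: the equation should read $(\phi^k - I)y = v$ for $T = T_i - y$, and a tile is an element, not a subset, of the patch $\om^k(T)$.
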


This follows from Lemma~\ref{lem-prag} and Gottschalk's Theorem
\cite{Gott}, see \cite[Sec.\,5]{Robi} for details.

Recall that a topological dynamical system is said to be {\em uniquely ergodic} if it has
a unique invariant Borel probability measure.

\begin{theorem} \label{unerg}
If $\om$ is a primitive tiling substitution with FLC, then
the dynamical system $(X_\om,\R^d)$ is uniquely ergodic.
\end{theorem}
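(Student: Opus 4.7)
The plan is to reduce unique ergodicity of $(X_\om, \R^d)$ to the existence of uniform patch frequencies, and then to produce these frequencies from the hierarchical substitution structure via Perron--Frobenius. Concretely, I would first observe that unique ergodicity is equivalent to the statement that for every patch $P$ appearing in some tiling of $X_\om$,
$$
\frac{L_P(B_R(x), \Sk)}{\Lk^d(B_R(x))} \longrightarrow \freq(P) \quad \text{as } R \to \infty,
$$
uniformly in $x \in \R^d$ and $\Sk \in X_\om$, where $L_P(B_R(x), \Sk)$ counts translates of $P$ contained in $\Sk$ whose support lies in $B_R(x)$. The reduction is standard: cylinder indicators $\mathbf 1_{X_{P,U}}$, for $X_{P,U} = \{\Sk \in X_\om : P+u \subset \Sk \text{ for some } u \in U\}$ with $P$ a patch and $U\subset \R^d$ open, are dense in $C(X_\om)$, and uniform convergence of their $\R^d$-ergodic averages to constants is precisely unique ergodicity (see \cite{LMS2,Robi}).

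To produce the per-supertile frequencies, I would invoke Perron--Frobenius for the primitive matrix $\Sf$: the dominant eigenvalue $\theta_1$ is simple with strictly positive right and left eigenvectors $r,l$ normalized by $l^T r = 1$, and $\theta_1^{-n}\Sf^n \to r\,l^T$. Write $a_j^{(n)}(P)$ for the number of translates of $P$ sitting inside the level-$n$ supertile $\om^n(T_j)$. The substitution gives the recursion
$$
a_j^{(n+1)}(P) = \sum_i \Sf_{ij}\, a_i^{(n)}(P) + E_j^{(n)}(P),
$$
where the error $E_j^{(n)}(P)$ counts $P$-copies straddling two children of $\om(T_j)$. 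Since $\Lk^d(\partial \Tk) = 0$ by Lemma~\ref{lem-bound1}, and the number of tiles in a $\diam(P)$-neighborhood of such an interface is of lower order than $\theta_1^n$, iterating the recursion and applying the Perron--Frobenius limit yields $\theta_1^{-n}a_j^{(n)}(P) \to c(P)\,r_j$ for a positive constant $c(P)$ independent of $j$.

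To upgrade this to arbitrary balls, I would perform a hierarchical covering. By the recognizability theorem of Solomyak for primitive FLC self-affine tilings, every $\Sk \in X_\om$ decomposes, for each $n$, as a tiling by translates of level-$n$ supertiles $\om^n(T_j)$. Choose $n = n(R)$ so that $\|\phi^n\| \to \infty$ but $\|\phi^n\| = o(R)$, e.g.\ $\|\phi^n\| \sim R^{1/2}$. Level-$n$ supertiles lying entirely inside $B_R(x)$ exhaust all but an $O((R/\|\phi^n\|)^{d-1})$-sized boundary collar; summing their $P$-counts and dividing by $\Lk^d(B_R(x))$ gives $\freq(P) = c(P)\sum_j r_j \Lk^d(T_j)$. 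The boundary supertiles contribute at most $O((R/\|\phi^n\|)^{d-1})\cdot O(\theta_1^n) = o(R^d)$ copies of $P$. FLC ensures only finitely many distinct level-$n$ supertiles up to translation, yielding uniformity in $\Sk$ and $x$.

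The main obstacle is the hierarchical decomposition itself: asserting that every $\Sk \in X_\om$ genuinely splits into level-$n$ supertiles for each $n$ is Solomyak's recognizability theorem, whose proof uses primitivity together with nonperiodicity. In the exceptional situation that $X_\om$ contains a periodic tiling, minimality forces $X_\om$ to be a finite union of $\R^d$-orbits forming a quotient torus, and unique ergodicity is immediate from classical Weyl equidistribution; in the generic nonperiodic case recognizability supplies the needed decomposition. The Perron--Frobenius step and the volume-to-boundary estimate are, once recognizability is in hand, elementary bookkeeping.
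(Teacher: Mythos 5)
The paper offers no proof of Theorem~\ref{unerg}; it cites \cite{LMS2} and \cite{Robi}, and your outline follows the same standard route as those sources: reduce unique ergodicity to uniform existence of patch frequencies, obtain per-supertile counts from Perron--Frobenius for the primitive matrix $\Sf$, and pass to arbitrary balls by a hierarchical packing with a boundary collar. The genuine gap is in the passage to arbitrary balls. You decompose an arbitrary $\Sk\in X_\om$ into level-$n$ supertiles, and you correctly identify this as recognizability (Theorem~\ref{th-ucp}) --- but that theorem requires non-periodicity, a hypothesis Theorem~\ref{unerg} does not carry (the paper imposes non-periodicity only \emph{after} this point). Your fallback for the periodic case is wrong: if one (hence, by continuity and minimality, every) tiling in $X_\om$ has a nontrivial period, the group of periods is a lattice of some rank $k$ with $1\le k\le d$, and only for $k=d$ is $X_\om$ a torus. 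For $1\le k<d$ your argument gives nothing: e.g., the product of the one-letter substitution $c\mapsto cc$ with Thue--Morse is a primitive FLC self-similar substitution in $\R^2$ with expansion $2I$ whose every tiling has period $(0,1)$ but no horizontal period; there $\om$ is not injective, recognizability fails, and the hull is not a torus, yet the theorem still holds.

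The repair --- and the way the cited references actually argue --- is to avoid recognizability altogether. By the very definition of $X_\om$, the restriction of any $\Sk\in X_\om$ to any ball $B_R(x)$ is a translate of a subpatch of some $\om^N(T_{j_0})$, and \emph{inside} $\om^N(T_{j_0})$ the decomposition into level-$n$ supertiles for each $n\le N$ exists by construction (subdivide top-down); your collar estimate then runs relative to that decomposition, whose possible non-uniqueness is irrelevant to the count. A secondary soft spot: when you ``iterate the recursion and apply Perron--Frobenius,'' error terms $E^{(k)}_j=o(\theta_1^k)$ pushed through $n$ levels of the recursion only give a cumulative error $o(n\theta_1^n)$, not $o(\theta_1^n)$. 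You need either a summable bound on $E^{(k)}_j/\theta_1^k$ (which requires a quantitative, not merely zero-measure, estimate on neighborhoods of the possibly fractal tile boundaries, beyond what Lemma~\ref{lem-bound1} gives), or the standard superadditivity trick $a^{(n+m)}_j\ge\sum_i \Sf^m_{ij}\,a^{(n)}_i$ comparing $\liminf$ and $\limsup$ of $\theta_1^{-n}a^{(n)}_j(P)$.
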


This result has appeared in the literature in several
slightly different versions.
We refer to \cite[Theorem 4.1]{LMS2} and \cite{Robi} for the proof.

Let $\mu$ be the unique invariant measure from Theorem~\ref{unerg}.
The measure-preserving tiling dynamical system is denoted by
$(X_\om,\R^d,\mu)$.

\begin{lemma} {\em (see \cite[Th.\,5.10]{Robi})} If $\om$ is a primitive tiling substitution with FLC, then there exists $k\in \N$ and
$\Tk\in X_\om$ such that $\om^k(\Tk)=\Tk$.
\end{lemma}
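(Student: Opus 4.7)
The plan is to produce $\Tk$ explicitly as an increasing union of iterated substitution patches of a carefully chosen ``self-matching'' tile. By primitivity, choose $k$ so that $\Sf^k$ has strictly positive entries; in particular, for each prototile $T_i$ there is at least one translate $T_i+a$ sitting inside $\om^k(T_i)$. Since $\phi^k$ is expansive, every eigenvalue of $\phi^k$ has modulus exceeding $1$, so $I-\phi^k$ is invertible, and we set
$$v=(I-\phi^k)^{-1}a,\qquad T=T_i+v.$$
The translation rule $\om^k(T_i+v)=\om^k(T_i)+\phi^k v$ then shows directly that $T$ is itself a tile of $\om^k(T)$: the copy $(T_i+a)+\phi^k v$ equals $T_i+v=T$, because $a+\phi^k v=v$ by the definition of $v$.

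With $\{T\}\subset\om^k(T)$ in hand, monotonicity of $\om$ on patches yields the nested chain $\om^{nk}(T)\subset\om^{(n+1)k}(T)$, so $\Tk:=\bigcup_{n\ge 0}\om^{nk}(T)$ is a collection of tiles with pairwise disjoint interiors. The identity $\om^k(\Tk)=\Tk$ is immediate from this nesting, and $\Tk\in X_\om$ because every finite sub-patch of $\Tk$ lies inside some $\om^{nk}(T)$, itself a translate of the admissible patch $\om^{nk}(T_i)$. It remains to check that the supports $\supp(\om^{nk}(T))=\phi^{nk}(\supp(T))$ exhaust $\R^d$. This holds provided $0\in\Int(\supp(T))$, i.e., $-v\in\Int(A_i)$: for then a ball $B_r(0)\subset\supp(T)$ yields, for any $x\in\R^d$, $\phi^{-nk}(x)\to 0$ (since $\phi^{-1}$ is a contraction fixing $0$), hence $\phi^{-nk}(x)\in B_r(0)\subset\supp(T)$ for large $n$, giving $x\in\phi^{nk}(\supp(T))$.

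The main obstacle is therefore to arrange $-v\in\Int(A_i)$, which is not automatic for an arbitrary choice of $a$. The geometric series $(I-\phi^k)^{-1}=-\sum_{n\ge 1}\phi^{-nk}$ (convergent by expansiveness) gives $-v=\phi^{-k}a+r_k$, where $r_k$ is of strictly smaller order than $\phi^{-k}a$ for large $k$. Thus for $k$ large it suffices to find a self-copy $T_i+a\subset\om^k(T_i)$ whose reference point $\phi^{-k}(a)$ lies in $\Int(A_i)$ at positive distance from $\partial A_i$---equivalently, a copy sitting deep in the interior of $\phi^k(A_i)$, with $\dist(a,\partial\phi^k(A_i))$ a large multiple of $\diam(A_i)$. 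That such ``deep-interior'' copies exist for sufficiently large $k$ is a Perron--Frobenius/boundary-counting estimate: the total number of $T_i$-tiles in $\om^k(T_i)$ grows like $\theta_1^k$, while, by Lemma~\ref{lem-bound1} (which gives $\Lk^d(\partial\phi^k(A_i))=0$), the number of such tiles within a fixed Euclidean distance of the boundary grows at a strictly slower exponential rate. Replacing $k$ by a sufficiently large multiple thus selects an admissible $a$, producing $v$, $T$, and the desired tiling $\Tk$ fixed by $\om^k$.
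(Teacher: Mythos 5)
The paper offers no proof of this lemma, citing Robinson's Theorem 5.10 instead; your argument is precisely the standard fixed-point construction used there ($v=(I-\phi^k)^{-1}a$, then the increasing union $\bigcup_n\om^{nk}(T_i+v)$), and it is correct, including the usually-glossed-over point that the seed tile must contain the origin in its \emph{interior}. The only quibble is that the count of tiles near $\partial\phi^k(A_i)$ need not decay at a strictly slower \emph{exponential} rate in general --- $\Lk^d(\partial A_i)=0$ only gives $o(\theta_1^k)$ --- but that weaker bound already suffices to produce the deep-interior copy you need.
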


Combining this with Lemma~\ref{lem-bound1}, we obtain that $\Lk^d(\partial \Sk)=0$ for all $\Sk\in X_\om$.

\subsection{Substitution action.}
The substitution $\om$ acts on the entire space $X_\om$, and it is easy to see from the definition of $X_\om$ that
$\om:\, X_\om \to X_\om$ is surjective. 
We will address the question of its invertibility, but first record the obvious relation:
\be \label{eq-infl}
\om(\Tk-y) = \om(\Tk) - \phi y.
\ee

\begin{lemma} \label{lem-invmeas}
If $\om$ is a primitive substitution with $FLC$, and $\mu$ is the unique invariant probability measure for the translation action $(X_\om,\R^d)$, then
$\mu$ is invariant under the substitution action $\om$ (in general, non-invertible). 
\end{lemma}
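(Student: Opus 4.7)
The plan is to exploit unique ergodicity: I will show that the push-forward measure $\om_*\mu$ is translation-invariant, and then conclude $\om_*\mu=\mu$ from Theorem~\ref{unerg}. Since $\om:X_\om\to X_\om$ is a local operation that scales the tiling metric in a controlled way (a patch agreement on $B_{1/r}(0)$ up to a translation of size $r$ becomes, after applying $\om$, an agreement on a ball of radius comparable to $\lam/r$ up to a translation of size comparable to $\|\phi\|r$), the map $\om$ is continuous, hence Borel measurable, so $\om_*\mu$ is a well-defined Borel probability measure on $X_\om$.

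The core computation is the following change-of-variables identity for preimages. Fix $y\in\R^d$ and a Borel set $B\subset X_\om$. Using the substitution-translation compatibility (\ref{eq-infl}), which is equivalent (after replacing $\Tk$ by $\Tk+y$) to $\om(\Tk+y)=\om(\Tk)+\phi y$, I compute
\[
\om^{-1}(B-\phi y)=\{\Tk:\om(\Tk)+\phi y\in B\}=\{\Tk:\om(\Tk+y)\in B\}=\om^{-1}(B)-y.
\]
Applying $\mu$ to both sides and using the translation-invariance of $\mu$ gives
\[
\om_*\mu(B-\phi y)=\mu\bigl(\om^{-1}(B-\phi y)\bigr)=\mu\bigl(\om^{-1}(B)-y\bigr)=\mu\bigl(\om^{-1}(B)\bigr)=\om_*\mu(B).
\]

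This shows $\om_*\mu$ is invariant under every translation of the form $\phi y$ with $y\in\R^d$. Because $\phi$ is expansive, all its eigenvalues are nonzero, so $\phi$ is a linear bijection of $\R^d$ and $\phi(\R^d)=\R^d$; hence $\om_*\mu$ is invariant under the full $\R^d$-action by translations. By Theorem~\ref{unerg}, the $\R^d$-action on $X_\om$ is uniquely ergodic, so $\om_*\mu=\mu$, which is exactly the assertion that $\mu$ is $\om$-invariant.

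The only subtle point I anticipate is verifying continuity of $\om:X_\om\to X_\om$ cleanly enough to guarantee Borel measurability; the rest is a short manipulation. Since the paper is working throughout with FLC tiling spaces on which $\om$ acts as a factor map of the Vershik-type inflation, this should be taken as standard and can be cited from \cite{Robi} or \cite{SolJap}. No invertibility of $\om$ is required anywhere in the argument — we only ever use preimages under $\om$, which is why the pushforward formulation is the right one.
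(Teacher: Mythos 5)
Your proposal is correct and is essentially the paper's own proof: the paper also considers $\om_*\mu=\mu\circ\om^{-1}$, notes that translation-invariance is immediate from (\ref{eq-infl}), and concludes $\om_*\mu=\mu$ by unique ergodicity. You have merely spelled out the preimage computation and the surjectivity of $\phi$ that the paper leaves implicit.
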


\begin{proof}
Consider the probability measure $\om_*\mu = \mu\circ \om^{-1}$ on $X_\om$. It is immediate from (\ref{eq-infl}) that this measure is invariant under
the translation action, hence $\mu=\om_*\mu$ by unique ergodicity.
\end{proof}

\begin{defi}
A primitive tile-substitution $\om$ is called {\em non-periodic} if all $\Tk\in X_\om$ are non-periodic, that is,
$\Tk-y = \Tk$ implies $y=0$.
If at least one $\Tk\in X_\om$ is non-periodic, then $\om$ is non-periodic by minimality.
\end{defi}

\begin{theorem}[\cite{Sol-ucp}] \label{th-ucp} The map $\om:\, X_\om \to X_\om$ is injective if and only if $\om$ is
a non-periodic substitution.
\end{theorem}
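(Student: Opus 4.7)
The plan is to establish each direction of the equivalence separately. The forward implication (injective $\Rightarrow$ non-periodic) follows from a short algebraic argument via the conjugation identity (\ref{eq-infl}), while the reverse implication (non-periodic $\Rightarrow$ injective) is the \emph{unique composition property} of Solomyak and requires a more delicate combinatorial analysis of super-tile decompositions.

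For the forward direction I argue by contrapositive. Suppose some $\Tk \in X_\om$ has non-trivial period group $L$; by applying Theorem~\ref{th-min} symmetrically one sees that every tiling in $X_\om$ has exactly the same period group $L$. Applying (\ref{eq-infl}) for each $x \in L$ gives $\om(\Tk) - \phi x = \om(\Tk - x) = \om(\Tk)$, so $\phi x$ is a period of $\om(\Tk) \in X_\om$, whence $\phi x \in L$; hence $\phi L \subset L$. The inclusion is strict: letting $V := \Span_\R L$, if $\phi L = L$ then $\phi$ would restrict to an automorphism of the lattice $L$ in the $\phi$-invariant subspace $V$, forcing its restricted determinant to have modulus one, in contradiction with the expansiveness of $\phi$. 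Picking $y \in L \setminus \phi L$ so that $\phi^{-1}y \notin L$, we get $\Tk - \phi^{-1}y \neq \Tk$ (since $\phi^{-1}y$ is not a period of $\Tk$), whereas $\om(\Tk - \phi^{-1}y) = \om(\Tk) - y = \om(\Tk)$ (since $y \in L$ is a period of $\om(\Tk)$). Thus $\om$ fails to be injective.

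For the reverse direction I would follow Solomyak's strategy and argue by contradiction: suppose $\om(\Tk_1) = \om(\Tk_2) =: \Sk$ with $\Tk_1 \neq \Tk_2$. Each $\Tk_j$ partitions $\Sk$ into super-tiles $\{\om(T) : T \in \Tk_j\}$, and these two partitions must disagree somewhere. I would form a decorated tiling $\widehat\Sk$ by labeling each tile $t \in \Sk$ with the pair consisting of the prototile type and internal position of $t$ within the super-tile from $\Tk_j$ containing it, for $j = 1, 2$. By FLC this labeling takes only finitely many values, so $\widehat\Sk$ lies in an FLC system carrying a lifted primitive substitution (after passing to a power of $\om$). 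The two partitions induce, at each $t \in \Sk$, an offset vector $v(t) \in \R^d$ between their super-tile centers; FLC forces the set of realized offsets to be finite, and the substitution relation $\om(\Sk - \phi^{-1}u) = \om(\Sk) - u$ constrains this set to be invariant under the contraction $\phi^{-1}$. A finite set invariant under a strict contraction contains only $0$, unless there is a globally constant non-zero offset $v_0$ realized everywhere, i.e.\ $\Tk_2 = \Tk_1 - v_0$. In the first case $\Tk_1 = \Tk_2$, contradicting our assumption; in the second case (\ref{eq-infl}) gives $\Sk = \om(\Tk_2) = \om(\Tk_1 - v_0) = \Sk - \phi v_0$, making $\phi v_0 \neq 0$ a period of $\Sk$ and contradicting non-periodicity.

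The main obstacle is making the reverse direction rigorous. The delicate issue is justifying that the set of offset vectors $\{v(t)\}$ is genuinely finite and $\phi^{-1}$-invariant, and that the dichotomy ``offsets collapse to zero or are globally constant'' is exhaustive. In dimension one the linear order of tiles makes this essentially automatic, but in dimension $d \ge 2$ super-tile boundaries can be fractal and the two decompositions may interleave in intricate ways. The careful propagation argument in [Sol-ucp] uses primitivity and repetitivity to match super-tile configurations across arbitrarily large regions of $\Sk$, together with the expansiveness of $\phi$, to close this gap.
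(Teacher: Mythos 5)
This theorem is imported by the paper from \cite{Sol-ucp} without proof, so there is no internal argument to compare against; your attempt has to be judged on its own. Your forward direction (injective $\Rightarrow$ non-periodic) is complete and correct: periods propagate to the whole hull by minimality, so all tilings share one period group $L$; the identity (\ref{eq-infl}) gives $\phi L\subseteq L$; the inclusion is strict because $\phi L=L$ would force $|\det(\phi|_{\Span_\R L})|=[L:\phi L]=1$, contradicting expansiveness; and any $y\in L\setminus\phi L$ yields $\Tk-\phi^{-1}y\ne\Tk$ with $\om(\Tk-\phi^{-1}y)=\om(\Tk)-y=\om(\Tk)$. That is a clean, self-contained argument for the easy half.

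The reverse direction, which is the actual content of the unique composition theorem, has a genuine gap --- one you flag yourself, but it should be named precisely. Two steps are asserted rather than proved. First, the claimed $\phi^{-1}$-invariance of the set of offset vectors does not follow from $\om(\Sk-\phi^{-1}u)=\om(\Sk)-u$: that identity relates translates of $\Sk$ to each other, whereas what you need is a relation between the level-one composition ambiguity of $\Sk$ and an ambiguity at higher levels of the supertile hierarchy, and it is not obvious that the ambiguity persists upward rather than dying out after one step. Second, the dichotomy ``all offsets vanish, or there is a single globally constant non-zero offset'' is exactly the hard combinatorial point: nothing in the sketch excludes an offset function taking several distinct non-zero values on different regions of $\Sk$, and the finite-set-under-a-contraction argument cannot even be started until invariance and some form of global coherence are established. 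Closing precisely these gaps --- by using repetitivity and FLC to show that a local coincidence of the two decompositions propagates globally, or else manufactures a non-trivial period --- is the substance of \cite{Sol-ucp}. As written, your argument for the hard implication is a plausible outline of that proof, not a proof.
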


We assume that $\om$ is non-periodic for the rest of the paper.

For non-periodic substitutions we have the $\Z$-action generated by $\om$, along with the translation $\R^d$-action.
It is useful to note that this $\Z$-action is, in some sense, {\em hyperbolic}, with the
orbit of $\Tk$ under the $\R^d$-action playing the role of the unstable set of $\Tk$ (clear from (\ref{eq-infl})), and the
the {\em transversal} containing $\Tk$ playing the role of the stable set, see
\cite{AP} for details. (The transversal is defined here as the set of all tilings
which agree with $\Tk$ exactly on some patch, possibly one tile, containing the origin in its interior; see Section 4 for precise
definitions.)

The substitution $\om$ can be extended to ``super-tiles'' and ``sub-tiles'' of all orders. More precisely, for any $k\in \Z$ let
$$
\phi^k \Ak:= \{\phi^k(T_i)\}_{i=1}^m,\ \ (\phi^k \om) (\phi^k T_i) = \phi^k(\om(T_i)).
$$
This defines a bi-infinite sequence of tile-substitutions and the corresponding tiling spaces $\{X_{\phi^k\om}\}_{k\in \Z}$.
The {\em subdivision map}
\be \label{eq-subdiv}
\Upsilon_k:\ X_{\phi^k \om} \to X_{\phi^{k-1}\om}
\ee
acts by subdividing each tile according to the rule implicit in the substitution. The inverse of the subdivision map is the
{\em composition map}, which is well-defined if and only if the substitution is non-periodic, by Theorem~\ref{th-ucp}. For $\Tk \in X_\om$
denote $\Tk^{(0)}:= \Tk$, and
$$
\Tk^{(k)} = \left\{\begin{array}{ll} \Upsilon_k^{-1}\cdots \Upsilon_1^{-1}(\Tk), & \mbox{if}\ k> 0,\\
\Upsilon_{k+1}\cdots \Upsilon_0(\Tk), & \mbox{if}\ k < 0;\end{array} \right.\ \ \Tk^{(k)}\in X_{\phi^k\om}.
$$
Note that $\Tk^{(k)}$ is always defined for $k<0$, and if $\om$ is non-periodic, then for $k>0$ as well.

\subsection{Some geometric measure theory}

Denote by $\Hk^\alpha$ the $\alpha$-dimensional Hausdorff measure (see e.g. \cite{mattila} for definitions and
basic properties) and by $\Lk^d$ the Lebesgue measure in $\R^d$.
A set $H\subset \R^d$ is said to be $m$-rectifiable for $m\in \N,\ m<d$, if $\Hk^m(H)>0$ and
there exist Lipschitz maps $h_i:\,\R^m\to \R^d$,\ $i\in \N$, 
such  that
\be \label{def-rectif}
\Hk^m\left(H\setminus \bigcup_{i=1}^\infty h_i(\R^m)\right)=0.
\ee
See e.g.\ \cite[p.204]{mattila}.

We will say that an open {\em bounded} set $\Om\subset \R^d$ is a {\em Lipschitz domain} if there exist finitely many Lipschitz maps $h_i:\,
\R^{d-1}\to \R^d$, $i\le N$, such that
\be \label{def-Lips}
\Hk^{d-1}\left(\partial \Om \setminus \bigcup_{i=1}^\infty h_i(\R^{d-1})\right)=0.
\ee
Thus, the boundary of a Lipschitz domain is $(d-1)$-rectifiable.
For $A\subset \R^d$ denote 
$$
U(A,r) = \{x\in \R^d:\ \dist(x,A) \le r\}.
$$
The $\alpha$-dimensional upper Minkowski content of $A$ is defined by 
$$
\Mk^{*\alpha}(A) = \limsup_{r\to 0} (2r)^{d-\alpha} \Lk^d(U(A,r)).
$$
It is known \cite{federer} that $\Mk^{*m}(A) = \Hk^m(A)$ for an $m$-rectifiable set $A$. It follows that 
for any $(d-1)$-rectifiable set $A$ and $b>0$ there exists $C(A,b)$ such that
\be \label{eq-gmt}
\Lk^d(U(A,r)) \le C(A,b) r,\ \ \mbox{for all}\ r\in (0,b].
\ee
Indeed, we have $\Mk^{*(d-1)}(A)>0$, hence (\ref{eq-gmt}) holds for some $b=b_0$, and then we can simply
take 
$$
C(A,b) = \max\left\{C(A,b_0),\frac{\Lk^d(U(A,b))}{b_0\Lk^d(U(A,b_0))}\right\}\ \ \mbox{for}\ \  b>b_0.
$$

\subsection{Linear algebra.} We will need the following well-known result.

\begin{lemma} \label{lem-jordan}
Let $J$ be a Jordan cell of size $s$ with diagonal entries $\theta$, $|\theta|>1$. Then
$$
\|J^k\| \le \left\{\begin{array}{ll} sk^{s-1}|\theta|^k & \mbox{for}\ k>0; \\
                                     s|k|^{s-1}|\theta|^{k+s} & \mbox{for}\ k<0, \end{array} \right.
$$
where $\|\cdot\|$ is the operator matrix norm induced by the Euclidean norm.
\end{lemma}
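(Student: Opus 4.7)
The strategy is the direct one: decompose $J=\theta I+N$ where $N$ is the standard nilpotent cell (ones on the off-diagonal, so $N^s=0$ and $\|N^j\|=1$ for $0\le j<s$, since each $N^j$ is a partial isometry on the standard basis). Because $\theta I$ and $N$ commute, the binomial theorem (respectively the negative binomial series) expresses $J^k$ as a finite sum indexed by $j=0,\dots,s-1$.

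For $k\ge 1$ I would write
\[
J^k=\sum_{j=0}^{\min(k,s-1)}\binom{k}{j}\theta^{k-j}N^j,
\]
and estimate termwise: $\|N^j\|=1$, $|\theta|^{k-j}\le|\theta|^k$ because $|\theta|>1$, and $\binom{k}{j}\le k^j$. Since $k\ge 1$ gives $k^j\le k^{s-1}$ for each $j\le s-1$, summing the $s$ terms produces $\|J^k\|\le sk^{s-1}|\theta|^k$.

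For $k=-n<0$ I would factor $J=\theta(I+\theta^{-1}N)$ and use the terminating series
\[
J^{-n}=\theta^{-n}(I+\theta^{-1}N)^{-n}=\sum_{j=0}^{s-1}(-1)^j\binom{n+j-1}{j}\theta^{-n-j}N^j,
\]
obtained from $\binom{-n}{j}=(-1)^j\binom{n+j-1}{j}$. The only combinatorial input needed is the bound $\binom{n+j-1}{j}\le n^j$ for $n\ge 1$, $j\ge 0$; this follows at once from $\prod_{i=0}^{j-1}(n+i)=n^j\prod_{i=0}^{j-1}(1+i/n)\le n^j\prod_{i=0}^{j-1}(1+i)=n^j\cdot j!$, where $1+i/n\le 1+i$ uses $n\ge 1$. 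Combining with $\|N^j\|=1$ and $|\theta|^{-j}\le 1$ (since $|\theta|>1$) gives $n^j|\theta|^{-j}\le n^{s-1}$ for each $j\le s-1$, so
\[
\|J^{-n}\|\le|\theta|^{-n}\sum_{j=0}^{s-1}n^j|\theta|^{-j}\le s\,n^{s-1}|\theta|^{-n}\le s\,n^{s-1}|\theta|^{-n+s},
\]
the last inequality being wasteful but matching the form stated in the lemma.

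There is no genuine obstacle here; the proof is essentially bookkeeping once one writes down the (negative) binomial expansion. The one tiny point worth isolating as a sublemma is the estimate $\binom{n+j-1}{j}\le n^j$, since without it one is tempted to use $(n+s-2)^j$, which introduces $s$-dependent constants that would force one to absorb factors of $|\theta|^s$ less transparently. With the clean bound above, the exponent $|\theta|^{k+s}$ in the statement leaves ample room.
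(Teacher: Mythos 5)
Your proof is correct. The paper states this lemma as a well-known fact and gives no proof at all, so there is nothing to compare against; your argument via $J=\theta I+N$, the (negative) binomial expansion, and the elementary bounds $\binom{k}{j}\le k^j$ and $\binom{n+j-1}{j}\le n^j$ is the standard one and all steps check out.
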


%%%%%%%%%%%%%%%%%%%%%%%%%%%%%%%%%%%%%%%%%%%%%%%%%%%%%%%%%%%%%%%%%%%%%%%%%%%%%%%%%%%%%%%%%%%%%%%%%%

\section{Finitely-additive measures on Lipschitz domains.}
Recall that we have the prototiles $T_i,\ i=1,\ldots,m$, with $\supp(T_i)=A_i$. 
By the definition of self-similar
tiling and the substitution matrix, we have
\be \label{eq-dij}
A_j = \bigcup_{i=1}^m (\phi^{-1}A_i + \phi^{-1} D_{ij}),\ \ j=1,\ldots,m,
\ee
where the union is almost disjoint (up to the boundaries), and $D_{ij}$ are finite sets, with $\Sf_{ij} = \# D_{ij}$.
Then we obtain, using Lemma~\ref{lem-bound1}, that
$$
\sum_{i=1}^m {\sf S}_{ij} \Lk^d(A_i) = \Lk^d(\supp(\om(T_j))= \Lk^d(\lam A_j) = \lam^d\Lk^d(A_j),\ \ j=1,\ldots,m,
$$
where $\lam$ is the linear dilatation of $\phi$.
It follows that $(\Lk^d(A_j))_{j=1}^m$ is the Perron-Frobenius eigenvector for ${\sf S}^t$,
with the eigenvalue $\lam^d$, hence the dominant eigenvalue of $\Sf^t$ is 
\be \label{lambda}
\th_1 = \lam^d.
\ee

Denote by $E^+$ the linear span of the Jordan cells of the transpose of the
substitution matrix
$\Sf^t$ corresponding to eigenvalues greater than 1. Our goal is to define finitely-additive (complex) measures
$\Phi^+_{v,\Tk}$ for $v\in E^+$ and $\Tk\in X_\om$, analogous to those from \cite{Bufetov0,Bufetov,Bufetov1}, 
which corresponds to $d=1$.

Recall that for each $\Tk\in X_\om$ we have a sequence $\{\Tk^{(k)}\}_{k\in \Z}$ of ``sub-tilings''
(for $k<0$) and ``super-tilings'' (for $k>0$), together with $\Tk^{(0)}=\Tk$, such that $\Tk^{(k-1)}$ is obtained from $\Tk^{(k)}$ by the
process of subdivision. They are uniquely defined by the assumption of non-periodicity, see Subsection 2.5.

Initially, $\Phi^+_{v,\Tk}$ is defined on the following ring of
subsets of $\R^d$:
$$
\Ck^+_{\Tk}:= \bigcup_{k\in \Z} \{F=(\supp(P)\setminus N_1) \cup N_2 :\ P\subset \Tk^{(k)},\ N_1, N_2\subset \partial \Tk^{(k)}\}. 
$$
(Recall that a ring of sets is closed under finite unions and intersections, but not necessarily under complements.)
In other words, this is the ring generated by sub-tiles and super-tiles of $\Tk$ of all orders (referred to as ``tiles
of order $k$'' for $k\in \Z$) and arbitrary subsets of their boundaries.
The finitely-additive measure $\Phi^+_{v,\Tk}$ is defined on tiles of order $k\in \Z$ by
\be \label{eq-mdef1}
\Phi^+_{v,\Tk}(\supp(T)) = ((\Sf^t)^k v)_j,\ \mbox{if}\ \exists\, k\in \Z,\ y\in \R^d:\ T=\phi^k(T_j)-y\in \Tk^{(k)},
\ee
and
$$
\Phi^+_{v,\Tk}(N)=0\ \ \ \mbox{if}\ N\subset \partial \Tk^{(k)}.
$$
Then this set function is extended to finite disjoint unions by additivity.
We need to show that this definition is consistent; then we get a finitely-additive set function on the ring $\Ck^+_{\Tk}$.
Clearly, it suffices to verify finite additivity for a tile and its
decomposition into sub-tiles.
We have
$$
\Phi^+_{v,\Tk}(A_j-y)= v_j,\ \ \ \mbox{where}\ T_j - y \in \Tk,
$$
and hence (\ref{eq-dij}) implies
$$
\sum_{i=1}^m \sum_{x\in D_{ij}} \Phi^+_{v,\Tk}(\phi^{-1}A_i +x -y) = \sum_{i=1}^m (\Sf^t)_{ji} ((\Sf^t)^{-1} v)_i = v_j,
$$
as desired. 
Note that $(\Sf^t)^{-1} v$ is well-defined because $v$ is in the expanding subspace for $\Sf^t$.
We showed finite additivity when subdividing a $\Tk$-tile into $\Tk^{(-1)}$-tiles; the general case follows.

Observe that $v\mapsto \Phi^+_{v,\Tk}$ is linear, so we can restrict ourselves to $v$ from a basis of $E^+$; specifically, to a basis
of eigenvectors and root vectors of $\Sf^t$, associated to the canonical Jordan form of $\Sf^t$.

\begin{defi} \label{def-rapid}
Define the {\bf rapidly expanding subspace of $E^{++}$ of $\Sf^t$} to be the linear span of Jordan cells
of eigenvalues satisfying the inequality
$$
|\theta|> \theta_1^{\frac{d-1}{d}}=\lam^{d-1},
$$
where $\lam$ is the linear dilatation of $\phi$, see (\ref{lambda}).
\end{defi}

The space $E^{++}$ yields finitely-additive measures for which the main contribution is in the interior of the set, rather than at
the boundary. Heuristically, the main contribution of the eigenvalues with absolute values in $(1, \lam^{d-1})$
will, on the contrary, be concentrated at the boundary. At any rate, it will turn out that the contribution of the latter eigenvalues
to the deviation of the ergodic average on, say, a ball of radius $R$ is bounded above by $CR^{d-1}$, see (\ref{eq-rapid}) below.
This effect was not present in the one-dimensional case, since for $d=1$ we have $E^{++}=E^+$.

Let $\gam>0$ be the average of $\lam^{d-1}$ and the smallest absolute value of an eigenvalue of $\Sf^t|_{E^{++}}$.
Then $\gam>\lam^{d-1}$, and we have
\be \label{eq-jordan}
\|(\Sf^t)^kv\|\le \const\cdot \gam^k\|v\|,\ \ \ k<0,\ \ \forall\,v\in E^{++},
\ee
where the constant depends only on the matrix $\Sf$.

Denote by $\Qk$ the ring of sets generated by
Lipschitz domains in $\R^d$ and subsets of their boundaries for a fixed $d$ (recall that Lipschitz domains are assumed to be {\bf bounded} by
definition).
We will show that for $v\in E^{++}$ the finitely-additive measure $\Phi^+_{v,\Tk}$ 
can be defined in a natural way on $\Qk$.
If the tile supports belong to $\Qk$ (in which case they have to be polyhedral, see Subsection 2.3), then this is an extension of the
corresponding finitely-additive measure defined on $\Ck^+_\Tk$. But we also allow ``fractal'' boundaries, in which case it is not clear whether
the finitely-additive measure can be extended to the ring generated by $\Ck^+_\Tk \cup \Qk$.

\begin{lemma} \label{lem-finadd1}
For any $v\in E^{++}$,
there exist finitely-additive measures $\Phi^+_{v,\Tk}$ defined on the ring $\Qk$. 
Moreover, they satisfy the following ``cocycle''
conditions for any $\Om \in \Qk$:
\be \label{coc1}
\Phi^+_{v,\Tk-y}(\Om) = \Phi^+_{v,\Tk}(\Om+y)\ \ \mbox{for all}\ \Tk\in X_\om,\ y\in \R^d,
\ee
\be \label{coc2}
\Phi^+_{\Sf^t v,\Tk}(\Om) = \Phi^+_{v,\om(\Tk)}(\phi(\Om)).
\ee
In particular, for an eigenvector $v$, with $\Sf^t v=\theta v$, we get
\be \label{coc3}
\Phi^+_{v,\om(\Tk)}(\phi(\Om)) = \Phi^+_{\theta v,\Tk}(\Om)=\theta \Phi^+_{v,\Tk}(\Om).
\ee
\end{lemma}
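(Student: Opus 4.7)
The plan is to extend $\Phi^+_{v,\Tk}$ from $\Ck^+_\Tk$ to $\Qk$ by exhaustion with sub-tiles of large negative order; the one technical hurdle, to be overcome by a Minkowski-content estimate combined with the rapid-expansion hypothesis, is the convergence of the approximations. Given a Lipschitz domain $\Om$ and $n\in\N$, I define the inner and outer $n$-approximations
$$
\Om_n^- := \bigcup\{\supp(T):\,T\in\Tk^{(-n)},\,\supp(T)\subset\Om\},\quad \Om_n^+ := \bigcup\{\supp(T):\,T\in\Tk^{(-n)},\,\supp(T)\cap\Om\ne\es\}.
$$
Both lie in $\Ck^+_\Tk$ and $\Om_n^-\subset\Om\subset\Om_n^+$, and I set $\Phi^+_{v,\Tk}(\Om):=\lim_{n\to\infty}\Phi^+_{v,\Tk}(\Om_n^-)$.

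To justify this, I bound the contribution of sub-tiles meeting $\partial\Om$. Since $\phi$ is a similitude of ratio $\lam$, a tile $T\in\Tk^{(-n)}$ has $\diam(\supp(T))\le C\lam^{-n}$ and $\Lk^d(\supp(T))\ge c\lam^{-nd}$, so every boundary-meeting tile lies in $U(\partial\Om,C\lam^{-n})$. Because $\partial\Om$ is $(d-1)$-rectifiable, (\ref{eq-gmt}) gives $\Lk^d(U(\partial\Om,C\lam^{-n}))\le C'\lam^{-n}$, and hence the number of such tiles is at most $C''\lam^{n(d-1)}$. For $v\in E^{++}$ the Jordan bound (\ref{eq-jordan}) yields $|\Phi^+_{v,\Tk}(\supp(T))|\le C_0\gam^{-n}\|v\|$ with $\gam>\lam^{d-1}$, whence
$$
\bigl|\Phi^+_{v,\Tk}(\Om_n^+)-\Phi^+_{v,\Tk}(\Om_n^-)\bigr|\,\le\,C_1\|v\|\,\bigl(\lam^{d-1}/\gam\bigr)^n\longrightarrow 0.
$$
The same estimate applied to tiles of order $-(n{+}1)$ whose order-$(-n)$ parent meets $\partial\Om$ makes $|\Phi^+_{v,\Tk}(\Om_{n+1}^-)-\Phi^+_{v,\Tk}(\Om_n^-)|$ a summable geometric sequence, so the limit exists; the squeeze by $\Om_n^+$ then shows that it is independent of the choice of approximating patches. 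It is precisely the strict inequality $\gam>\lam^{d-1}$, the defining property of $E^{++}$, that drives this convergence; without it the exhaustion fails, which conceptually explains why the construction works on $E^{++}$ but not on all of $E^+$.

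For finite additivity on disjoint Lipschitz domains $\Om_1,\Om_2$, observe that $(\Om_1\cup\Om_2)_n^-$ and $(\Om_1)_n^-\cup(\Om_2)_n^-$ differ only by tiles meeting $\partial\Om_1\cup\partial\Om_2$, and the boundary estimate annihilates the discrepancy in the limit; setting $\Phi^+_{v,\Tk}(N):=0$ for any $N\subset\partial\Om$ (again justified by the Minkowski bound) then extends the set function uniquely and additively to the whole ring $\Qk$. Cocycle (\ref{coc1}) is immediate from $(\Tk-y)^{(k)}=\Tk^{(k)}-y$ and translation equivariance of the approximations. For (\ref{coc2}) I first check on tiles, using the identity $(\om(\Tk))^{(k+1)}=\{\phi(T):T\in\Tk^{(k)}\}$: for $T=\phi^k T_j-z$ one has $\Phi^+_{v,\om(\Tk)}(\phi(\supp(T)))=((\Sf^t)^{k+1}v)_j=\Phi^+_{\Sf^tv,\Tk}(\supp(T))$; the relation $(\phi(\Om))_n^-=\phi(\Om_{n+1}^-)$ then transports the identity to arbitrary Lipschitz domains and completes the proof.
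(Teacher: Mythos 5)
Your proposal is correct and follows essentially the same route as the paper: exhaustion of a Lipschitz domain by tiles of large negative order, a Minkowski-content bound on the number of tiles near the boundary giving $O(\lam^{n(d-1)})$, the bound $\|(\Sf^t)^{-n}v\|\lesssim\gam^{-n}$ with $\gam>\lam^{d-1}$ to make the telescoping increments geometrically summable, and verification of the cocycle identities on tiles followed by passage to the limit. The inner/outer approximation framing and the explicit well-definedness remark are cosmetic additions; the substance coincides with the paper's proof.
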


Before the proof, we introduce a construction which will be used throughout the paper. It is an efficient hierarchical
``packing'' of a Lipschitz domain by tiles of varying orders. Analogous constructions have been used in \cite{Solomon1,Solomon2,ACG,ACG2,Sadun}.

Fix a tiling $\Tk\in X_\om$ and a Lipschitz domain $\Om$.
Recall that $\Tk^{(k)}|_\Om$ denotes the collection of $\Tk^{(k)}$-tiles whose supports lie in
$\Om$. Observe that $\supp(\Tk^{(k)}|_\Om) \in \Ck^+_\Tk$ and $\supp(\Tk^{(k+1)}|_\Om)\subset \supp(\Tk^{(k)}|_\Om)$. 
Further, let
\be \label{def-rk}
\Rk^{(k)}(\Om):= \{T\in \Tk^{(k)}|_\Om:\ \supp(T) \not\subset \supp(\Tk^{(k+1)}|_\Om)\}.
\ee
In words, $\Rk^{(k)}(\Om)$ consists of those tiles of order $k$ in $\Om$ which belong to tiles of
order $k+1$ that {\bf do not} lie in $\Om$, hence these tiles of order $k+1$ must intersect the boundary $\partial \Om$.
Let $d_{\max}, d_{\min}$ be the largest and the smallest diameter
of a $\Tk$-tile respectively. Since the largest diameter of a $\Tk^{(k+1)}$-tile equals $d_{\max} \lam^{k+1}$,
it follows that
$$
\supp(\Rk^{(k)}(\Om)) \subset U(\partial \Om, d_{\max} \lam^{k+1}).
$$
Denote by $a_{\min} = \min_{j\le m} \Lk^d(A_j)$ the smallest volume of a prototile.
Then the volume of an order $k$ tile is at least
$a_{\min}\lam^{dk}$, and we obtain
\be \label{eq-estrk}
\# \Rk^{(k)}(\Om) \le \Lk^d(U(\partial \Om, d_{\max} \lam^{k+1}))  a_{\min}^{-1} \lam^{-dk}.
\ee

\begin{proof}[Proof of Lemma~\ref{lem-finadd1}]
We define for a Lipschitz domain $\Om$:
\be \label{def-phiv}
\Phi^+_{v,\Tk}(\Om):= \lim_{k\to -\infty} \Phi^+_{v,\Tk}(\supp(\Tk^{(k)}|_\Om)).
\ee
Let us show that the limit exists (note that we cannot use monotonicity, since the values of $\Phi^+_{v,\Tk}$ need not be
positive or even real).
By (\ref{eq-estrk}) and (\ref{eq-gmt}),
$$
\# \Rk^{(k)}(\Om) \le C(\partial \Om,1) d_{\max}\lam  a_{\min}^{-1} \lam^{-(d-1)k} = \const\cdot \lam^{-(d-1)k}
$$
for $k\in \Z$ such that $d_{\max}\lam^{k+1}\le 1$, that is, for
$$
k \le -\log d_{\max}-\log\lam -1.
$$
Now, by finite additivity of $\Phi^+_{v,\Tk}$ on $\Ck^+_\Tk$, in view of (\ref{eq-mdef1}),
\begin{eqnarray*}
|\Phi^+_{v,\Tk}(\supp(\Tk^{(k)}|_\Om)) - \Phi^+_{v,\Tk}(\supp(\Tk^{(k+1)}|_\Om))| & = & |\Phi^+_{v,\Tk}(\supp(\Rk^{(k)}(\Om))| \\
               & \le & \sum_{T\in \Rk^{(k)}(\Om)} |\Phi^+_{v,\Tk}(\supp(T))| \\
               & \le & \# \Rk^{(k)}(\Om)\cdot \|(\Sf^t)^k v)\| \\
               & \le & \const\cdot \lam^{-(d-1)k} \gamma^k\|v\|,
\end{eqnarray*}
for $k<\min\{0, -\log d_{\max}-\log\lam -1\}$, where we used (\ref{eq-jordan}) in the last step.
By assumption, $|\gamma|>\lam^{d-1}$, 
hence the last expression tends to zero exponentially fast as $k\to -\infty$, and the existence of the
limit in (\ref{def-phiv}) is verified. We then define
$$
\Phi^+_{v,\Tk}(\Om\cup N) = \Phi^+_{v,\Tk}(\Om)\ \ \mbox{for}\ N\subset \partial \Om.
$$

Now let us check finite additivity. If $\Om_1$ and $\Om_2$ are Lipschitz domains with disjoint interiors such that
$$
\ov{\Om} = \ov{\Om}_1\cup \ov{\Om}_2
$$
for another Lipschitz domain $\Om$, then 
$$
\Tk^{(k)}|_\Om \setminus (\Tk^{(k)}_{\Om_1} \cup \Tk^{(k)}_{\Om_2}) \subset \Rk^{(k)}(\Om_1)\cap\Rk^{(k)}(\Om_2),
$$
since the former consists of those $\Tk^{(k)}$-tiles which intersect $\partial \Om_1\cap \partial \Om_2$.
The estimate above shows that the $\Phi^+_{v,\Tk}$-measure of the support of the latter patch
tends to zero as $k\to - \infty$. The definition (\ref{def-phiv})
then shows 
$$\Phi^+_{v,\Tk}(\Om) = \Phi^+_{v,\Tk}(\Om_1) + \Phi^+_{v,\Tk}(\Om_2).$$
Thus we can define $\Phi^+_{v,\Tk}$ on a finite union of disjoint Lipschitz domains and subsets of their boundaries
consistently, and finite additivity follows.

Formulas (\ref{coc1}) and (\ref{coc2}) are easily verified: 
they hold on the ring $\Ck_\Tk^+$ by definition, hence they hold for Lipschitz domains by (\ref{def-phiv}), and
therefore for all the elements of the ring $\Qk$.
\end{proof}

\smallskip

In the next lemma, we estimate the growth of our finitely-additive measures on dilations of a given Lipschitz domain.

We will often use constants which depend only on the tiling substitution $\om$ 
(which includes all the data for the tiling space $X_\om$), and on the
domain $\Omega$. This will be often written as $C=C(\om,\Om)$ for short.

\begin{lemma} \label{lem-finadd2}
Suppose that $v\in E^{++}$, with $\|v\|=1$, belongs to the $\Sf^t$-invariant
subspace corresponding to a Jordan block of
size $s\ge 1$, with an eigenvalue $\theta$ ($v$ is an eigenvector if $s=1$). Then 
for a Lipschitz domain $\Om$ and $\Om_R=R\Om$, we have for $R\ge 2$:
\be \label{eq-hoelder}
|\Phi^+_{v,\Tk}(\Om_R)|\le C_1 (\log R)^{s-1} R^\alpha,\ \ \mbox{where}\ \alpha = \frac{\log|\theta|}{\log \lam}\,,
\ee
for a constant $C_1=C_1(\om,\Om)$. 
\end{lemma}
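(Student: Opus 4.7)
The plan is to pack $\Om_R$ hierarchically, using supertiles of order as high as $N\asymp \log R/\log\lam$, and then estimate the remainders near $\partial\Om_R$ by combining a scaled version of \eqref{eq-gmt} with Lemma~\ref{lem-jordan} and \eqref{eq-jordan}. Concretely, let $N=N(R)$ be the largest integer with $d_{\max}\lam^{N+1}\le R$, so that $\lam^N\asymp R$. Iterating the decomposition $\supp(\Tk^{(k)}|_{\Om_R})=\supp(\Tk^{(k+1)}|_{\Om_R})\cup\supp(\Rk^{(k)}(\Om_R))$ (with almost disjoint interiors) from $k\to-\infty$ up to $k=N-1$ and using \eqref{def-phiv}, one obtains the telescoping identity
\[
\Phi^+_{v,\Tk}(\Om_R)=\Phi^+_{v,\Tk}(\supp(\Tk^{(N)}|_{\Om_R}))+\sum_{k=-\infty}^{N-1}\Phi^+_{v,\Tk}(\supp(\Rk^{(k)}(\Om_R))).
\]

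The bulk term contains at most $\Lk^d(\Om_R)/(a_{\min}\lam^{dN})=O_\Om(1)$ supertiles, each of mass at most $\|(\Sf^t)^N v\|\le sN^{s-1}|\theta|^N$ by Lemma~\ref{lem-jordan}; since $|\theta|^N=\lam^{\alpha N}\asymp R^\alpha$, it contributes $O((\log R)^{s-1}R^\alpha)$, which already matches the target order.

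For the remainders, the scaling identity $U(R\partial\Om,r)=R\cdot U(\partial\Om,r/R)$ together with \eqref{eq-gmt} (applied with $b=1$) yields $\Lk^d(U(R\partial\Om,r))\le C(\Om) R^{d-1}r$ for $r\le R$; combined with \eqref{eq-estrk}, this gives $\#\Rk^{(k)}(\Om_R)\le C_2(\Om)\,R^{d-1}\lam^{-(d-1)k}$ throughout the range $k\le N$. I would then split the sum at $k=0$. For $k\le 0$, \eqref{eq-jordan} gives $|\Phi^+_{v,\Tk}(\supp(\Rk^{(k)}(\Om_R)))|\le \const\cdot R^{d-1}(\gam/\lam^{d-1})^k$, a convergent geometric series (since $\gam/\lam^{d-1}>1$ and $k\le 0$) with total $O(R^{d-1})\le O(R^\alpha)$. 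For $0<k\le N-1$, Lemma~\ref{lem-jordan} yields the series $R^{d-1}\sum_{k=1}^{N-1}k^{s-1}(|\theta|/\lam^{d-1})^k$; since $|\theta|/\lam^{d-1}>1$, it is dominated up to a constant by its last term $N^{s-1}(|\theta|/\lam^{d-1})^N\asymp (\log R)^{s-1}R^\alpha/R^{d-1}$, contributing $O((\log R)^{s-1}R^\alpha)$.

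The main technical obstacle is establishing the scaled Minkowski-type estimate with uniform-in-$R$ constants; the rest is a balanced exponential sum in which the defining inequality $|\theta|>\lam^{d-1}$ for $v\in E^{++}$ plays a triple role: it makes the bulk (top-level) term dominate, it ensures convergence of the tail $k\le 0$, and it forces the sum over $0<k\le N$ to be dominated by its largest term at $k\asymp N$. The logarithmic factor $(\log R)^{s-1}$ enters only through the polynomial correction in Lemma~\ref{lem-jordan} for a Jordan block of size $s$, and disappears in the eigenvector case $s=1$.
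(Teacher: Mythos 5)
Your proof is correct and follows essentially the same route as the paper's: the same hierarchical packing of $\Om_R$ by the patches $\Rk^{(k)}(\Om_R)$, the same count $\#\Rk^{(k)}(\Om_R)\lesssim R^{d-1}\lam^{-(d-1)k}$ via the scaled Minkowski estimate (\ref{eq-gmt}), and the same conclusion that the resulting exponential sum is dominated by its top term because $|\theta|>\lam^{d-1}$. The only cosmetic difference is that you isolate the top-level ``bulk'' $\supp(\Tk^{(N)}|_{\Om_R})$ and count its $O(1)$ supertiles by volume, whereas the paper folds that level into the boundary sum at $k=k_R$; both give the same bound.
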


\begin{proof} 
Fix a tiling $\Tk\in X_\om$.
For $k\in \Z$ we consider patches $\Rk^{(k)}(\Om)$ introduced in (\ref{def-rk}).
If we fix $\Om$ and let $\Om_R = R\Om$, then we obtain by (\ref{eq-estrk}),
\begin{eqnarray}
\# \Rk^{(k)}(\Om_R) & \le & \Lk^d(U(\partial \Om_R, d_{\max} \lam^{k+1})) a_{\min}^{-1} \lam^{-dk} \nonumber \\
                    &  =  & R^d\Lk^d(U(\partial \Om, d_{\max} \lam^{k+1}R^{-1})) a_{\min}^{-1} \lam^{-dk}. \label{103.2}
\end{eqnarray}
Denote
$$
k_R = \max\{k\in \Z:\ \Rk^{(k)}(\Om_R) \ne \es\}.
$$
It is clear that $d_{\min}\lam^{k_R}\le \diam(\Om_R)=R\,\diam(\Om)$, so
\be \label{eq-kr}
\lam^{k_R}\le R\,\diam(\Om)/d_{\min}.
\ee
Thus, in (\ref{103.2}) we have $d_{\max} \lam^{k+1}R^{-1} \le  \lam\,\diam(\Om)d_{\max}/d_{\min}:=b$, hence
\be \label{103}
\# \Rk^{(k)}(\Om_R) \le C'\cdot R^{d-1} \lam^{-(d-1)k},\ \ \ \mbox{with}\ \ 
C' = C(\partial \Om,b)d_{\max}\lam a_{\min}^{-1}.
\ee
where we used (\ref{eq-gmt}). 
Note that the constant $b$, and hence $C'$, depends only on $\om$ and $\Om$.
We have by (\ref{def-phiv}):
\be \label{104}
\Phi^+_{v,\Tk}(\Om_R) = \sum_{k=-\infty}^{k_R}\sum_{T\in \Rk^{(k)}(\Om_R)} \Phi^+_{v,\Tk}(\supp(T)).
\ee
Therefore, 
\begin{eqnarray}
|\Phi^+_{v,\Tk}(\Om_R)| & \le & C'\cdot R^{d-1} \sum_{k=-\infty}^{k_R} \frac{\|(\Sf^t)^k v)\|}{\lam^{(d-1)k}} \label{104.1}\\
                        & \le & C''\cdot R^{d-1}\left( \sum_{k=-\infty}^{-1} \frac{|k|^{s-1}|\theta|^{k+s}}{\lam^{(d-1)k}} + 1 + 
   \sum_{k=1}^{k_R} \frac{|k|^{s-1}|\theta|^{k}}{\lam^{(d-1)k}}\right) \label{104.2} \\
                       & \le & C'''\cdot R^{d-1} \frac{|k_R|^{s-1} |\theta|^{k_R}}{\lam^{(d-1)k_R}}. \label{105}
\end{eqnarray}
We used (\ref{103}) and (\ref{eq-mdef1}) in (\ref{104.1}), Lemma~\ref{lem-jordan} in (\ref{104.2}), and the assumption $|\th|>\lam^{d-1}$
in (\ref{105}). Note that the constants $C'', C'''$ depend only on $\om$ and $\Om$ (they depend on the substitution
matrix, which is encoded in $\om$).
We also assumed that $k_R>0$, 
but this does not lead to 
loss of generality since it is enough to establish (\ref{eq-hoelder}) for
$R$ sufficiently large.
In view of (\ref{eq-kr}), we have
$$
\frac{|\theta|^{k_R}}{\lam^{(d-1)k_R}}\le \const\cdot R^{\frac{\log(|\theta|/\lam^{d-1})}{\log \lam}} =
\const\cdot R^{\frac{\log|\theta|}{\log\lam}-(d-1)},
$$
hence the inequality (\ref{105}) implies
$$
|\Phi^+_{v,\Tk}(\Om_R)| \le C_1 (\log R)^{s-1} R^{\log|\theta|/\log\lam},
$$
with $C_1=C_1(\om,\Om)$, as desired.
\end{proof}

We record the following fact, which easily follows from the proof of Lemma~\ref{lem-finadd2}, for future use.
The notation $\Tk|_{\Om_R}$, used in the lemma below, means the collection of all $\Tk$ tiles contained in $\Om_R$.

\begin{lemma} \label{lem-finadd2.cor}
For a Lipschitz domain $\Om$ there exists a constant $C_2=C_2(\om,\Om)>0$
such that for all $v\in E^{++}$, with $\|v\|=1$, and for all $\Tk\in X_\om$, we have for $\Om_R=R\Om$:
\be \label{corest}
|\Phi^+_{v,\Tk}(\Om_R) - \Phi^+_{v,\Tk}(\supp(\Tk|_{\Om_R})| \le C_2 R^{d-1},\ \ \mbox{for all}\ R\ge 1.
\ee
\end{lemma}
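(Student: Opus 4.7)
The plan is to reuse the hierarchical packing machinery developed in the proof of Lemma~\ref{lem-finadd2}, but now only sum over negative levels $k \le -1$, where the series converges to a finite constant thanks to the $E^{++}$ assumption.

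First I would observe that $\supp(\Tk|_{\Om_R}) = \supp(\Tk^{(0)}|_{\Om_R})$ and telescope the difference using the ring-$\Ck^+_\Tk$ additivity of $\Phi^+_{v,\Tk}$: for any integer $K < 0$,
\begin{equation*}
\Phi^+_{v,\Tk}(\supp(\Tk^{(K)}|_{\Om_R})) - \Phi^+_{v,\Tk}(\supp(\Tk^{(0)}|_{\Om_R})) = \sum_{k=K}^{-1} \Phi^+_{v,\Tk}(\supp(\Rk^{(k)}(\Om_R))),
\end{equation*}
since by the definition (\ref{def-rk}) of $\Rk^{(k)}$ we have $\supp(\Tk^{(k)}|_{\Om_R}) = \supp(\Tk^{(k+1)}|_{\Om_R}) \cup \supp(\Rk^{(k)}(\Om_R))$ (with overlap on the boundary, where $\Phi^+_{v,\Tk}$ vanishes). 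Letting $K\to -\infty$ and invoking the definition (\ref{def-phiv}) of $\Phi^+_{v,\Tk}(\Om_R)$ gives
\begin{equation*}
\Phi^+_{v,\Tk}(\Om_R) - \Phi^+_{v,\Tk}(\supp(\Tk|_{\Om_R})) = \sum_{k=-\infty}^{-1} \Phi^+_{v,\Tk}(\supp(\Rk^{(k)}(\Om_R))).
\end{equation*}

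Next I would estimate each term on the right exactly as in the proof of Lemma~\ref{lem-finadd2}. For $k \le -1$ and $R \ge 1$, the radius $d_{\max}\lam^{k+1}R^{-1} \le d_{\max}$ stays bounded, so (\ref{eq-estrk}) combined with (\ref{eq-gmt}) (applied with $b = d_{\max}$) gives
\begin{equation*}
\# \Rk^{(k)}(\Om_R) \le C' R^{d-1} \lam^{-(d-1)k}
\end{equation*}
for a constant $C' = C'(\om,\Om)$ independent of $v$, $\Tk$, $R$, and $k$. Combined with $|\Phi^+_{v,\Tk}(\supp(T))| \le \|(\Sf^t)^k v\|$ from (\ref{eq-mdef1}), this yields
\begin{equation*}
|\Phi^+_{v,\Tk}(\supp(\Rk^{(k)}(\Om_R)))| \le C' R^{d-1} \lam^{-(d-1)k} \|(\Sf^t)^k v\|.
\end{equation*}

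Then I would invoke (\ref{eq-jordan}) to obtain $\|(\Sf^t)^k v\| \le \const \cdot \gam^k$ for $k < 0$, with $\gam > \lam^{d-1}$ depending only on $\Sf$. Summing over $k \le -1$ produces
\begin{equation*}
\sum_{k=-\infty}^{-1} \lam^{-(d-1)k} \gam^k = \sum_{m=1}^{\infty} \left(\frac{\lam^{d-1}}{\gam}\right)^{\!m},
\end{equation*}
a convergent geometric series whose sum depends only on $\om$. Multiplying the resulting constant by $C'$ delivers the required bound $C_2 R^{d-1}$ with $C_2 = C_2(\om,\Om)$. No step presents a real obstacle; the only thing to watch is that the assumption $\gam > \lam^{d-1}$ makes the series converge, which is precisely why restricting to $v \in E^{++}$ rather than general $v \in E^+$ upgrades the $R^\alpha$ bound of Lemma~\ref{lem-finadd2} to the sharper $R^{d-1}$ bound claimed here.
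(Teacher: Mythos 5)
Your proof is correct and follows essentially the same route as the paper's: both express the difference $\Phi^+_{v,\Tk}(\Om_R) - \Phi^+_{v,\Tk}(\supp(\Tk|_{\Om_R}))$ as the sum over $k\le -1$ of the contributions of the patches $\Rk^{(k)}(\Om_R)$, bound $\#\Rk^{(k)}(\Om_R)$ by $C'R^{d-1}\lam^{-(d-1)k}$ via (\ref{eq-estrk}) and (\ref{eq-gmt}), and then sum the geometric series using $\|(\Sf^t)^k v\|\le \const\cdot\gam^k$ with $\gam>\lam^{d-1}$ from (\ref{eq-jordan}). Your telescoping identity and the observation that the cutoff radius stays bounded for $k\le -1$, $R\ge 1$ are exactly the points the paper relies on implicitly.
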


\begin{proof}
We have
$$
\Phi^+_{v,\Tk}(\supp(\Tk|_{\Om_R})) = \sum_{k=0}^{k_R}\sum_{T\in \Rk^{(k)}(\Om_R)} \Phi^+_{v,\Tk}(\supp(T)).
$$
Comparing with (\ref{104}) and using (\ref{103}) we obtain, similarly to (\ref{105}):
\begin{eqnarray*}
|\Phi^+_{v,\Tk}(\Om_R) - \Phi^+_{v,\Tk}(\supp(\Tk|_{\Om_R})| & \le &
\left|\sum_{k=-\infty}^{-1}\sum_{T\in \Rk^{(k)}(\Om_R)} \Phi^+_{v,\Tk}(\supp(T))\right| \\
& \le & C'R^{d-1} \sum_{k=-\infty}^{-1} \frac{\|(\Sf^t)^k v)\|}{\lam^{(d-1)k}} \\
                       & \le & C'R^{d-1} \cdot C'' \sum_{k=-\infty}^{-1} \frac{\gamma^{k_R}}{\lam^{(d-1)k_R}} \\
                       & \le & C_2 R^{d-1},
\end{eqnarray*}
where $\gamma>\lam^{d-1}$ is from (\ref{eq-jordan}).
\end{proof}

\subsection{H\"older estimates.}
Next we establish a H\"older estimate for our finitely-additive measures. Although more general
Lipschitz domains could be handled, we restrict ourselves to cubes, for simplicity and because the limit law in Section~\ref{section_limitlaw}
below is obtained in this setting. We do not need this result until Section~\ref{section_limitlaw}.
Denote
$$
Q_r:= [-r/2,r/2]^d\ \ \ \mbox{and}\ \ \ A_{r_1,r_2}:= Q_{r_2}\setminus \Int(Q_{r_1})\ \ \mbox{for}\ 0\le r_1 < r_2.
$$
Thus, $A_{r_1,r_2}$ is the closed ``annulus'' between two concentric cubes.

\begin{lemma} \label{lem-finadd3}
Suppose that $v\in E^{++}$, with $\|v\|=1$,
satisfies $\Sf^tv = \theta v$ (so that $\theta>\lam^{d-1}$ by the definition of the rapidly expanding subspace $E^{++}$). 
Then there exists a constant $C_3=C_3(\om,\Om)>0$
such that for any $\Tk\in X_\om$ and any $0\le r_1<r_2$ we have
\be \label{eq-hoelder2}
|\Phi^+_{v,\Tk}(Q_{r_2}) - \Phi^+_{v,\Tk}(Q_{r_1})| \le C_3  r_2^{d-1} (r_2-r_1)^{\alpha-(d-1)},\ \ 
\mbox{where}\ \alpha = \frac{\log|\theta|}{\log \lam}\,.
\ee
\end{lemma}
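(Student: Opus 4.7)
My plan is to reduce the estimate to a Lipschitz-domain bound on the annulus $A_{r_1,r_2}=Q_{r_2}\setminus\Int(Q_{r_1})$. By finite additivity of $\Phi^+_{v,\Tk}$ on the ring $\Qk$,
$$
\Phi^+_{v,\Tk}(Q_{r_2}) - \Phi^+_{v,\Tk}(Q_{r_1}) = \Phi^+_{v,\Tk}(A_{r_1,r_2}),
$$
so it suffices to bound $|\Phi^+_{v,\Tk}(A_{r_1,r_2})|$ by the right-hand side of (\ref{eq-hoelder2}). Write $\delta = r_2-r_1$. Since the interior of the annulus is a Lipschitz domain, the definition (\ref{def-phiv}) together with the identity used in (\ref{104}) gives
$$
\Phi^+_{v,\Tk}(A_{r_1,r_2}) = \sum_{k=-\infty}^{k_0}\ \sum_{T\in \Rk^{(k)}(A_{r_1,r_2})} \Phi^+_{v,\Tk}(\supp(T)),
$$
where $k_0 = \max\{k\in\Z:\Rk^{(k)}(A_{r_1,r_2})\ne\es\}$; since $v$ is an eigenvector with eigenvalue $\theta$, each individual summand is bounded in absolute value by $|\theta|^k$.

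The key geometric ingredient is the uniform Minkowski-type bound
$$
\Lk^d\bigl(U(\partial A_{r_1,r_2},s)\bigr)\le C_d\,r_2^{d-1}\,s\qquad\text{for all } 0<s\le C r_2,
$$
which I would verify directly by splitting the boundary into the two cube surfaces $\partial Q_{r_1}$ and $\partial Q_{r_2}$ and treating the cases $s\le r_1/2$ and $s>r_1/2$ separately. Inserting this into (\ref{eq-estrk}) yields, for every $k\le k_0$,
$$
\#\Rk^{(k)}(A_{r_1,r_2})\le C\,r_2^{d-1}\lam^{-(d-1)k},
$$
while the trivial volume bound, combined with $\Lk^d(A_{r_1,r_2})\le C r_2^{d-1}\delta$, gives
$$
\#\Rk^{(k)}(A_{r_1,r_2})\le C\,r_2^{d-1}\delta\,\lam^{-dk}.
$$

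The heart of the argument is the choice of a crossover scale. Let $k^*$ be the largest integer with $\lam^{k^*}\le \delta$; use the first bound for $k\le k^*$ and the second for $k>k^*$. Inserting $|\theta|^k$, each range becomes a geometric series. For $k\le k^*$ the common ratio is $|\theta|/\lam^{d-1}>1$ (this is the defining property of $E^{++}$), so the series is dominated by its largest term; using $\lam^{k^*}\asymp\delta$ this equals $\const\cdot r_2^{d-1}\delta^{\alpha-(d-1)}$. For $k>k^*$ the common ratio is $|\theta|/\lam^d<1$ (since $|\theta|<\theta_1=\lam^d$; the borderline $|\theta|=\theta_1$ is handled separately because there $\Phi^+_{v,\Tk}$ is a scalar multiple of Lebesgue measure, for which $|r_2^d-r_1^d|\le d\,r_2^{d-1}\delta$ is immediate), so the series is dominated by its first term, evaluating to $\const\cdot r_2^{d-1}\delta\cdot\delta^{\alpha-d} = \const\cdot r_2^{d-1}\delta^{\alpha-(d-1)}$. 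Adding the two contributions yields (\ref{eq-hoelder2}).

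The principal obstacle is locating the right crossover scale and recognizing that the competition between the boundary-area regime (which prevails for small tiles) and the volume regime (which prevails for tiles thicker than the annulus width) is resolved exactly at $\lam^{k^*}\asymp\delta$; once that balancing is in place, both partial sums conspire to produce the same exponent $\alpha-(d-1)$. A secondary technical nuisance is the uniformity of the Minkowski estimate in the two parameters $r_1$ and $r_2$, which prevents a direct appeal to (\ref{eq-gmt}) with $A = \partial A_{r_1,r_2}$, whose constant would otherwise depend on the set.
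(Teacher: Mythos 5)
Your proof is correct and follows the paper's argument in all essentials: the reduction to the annulus $A_{r_1,r_2}$, the hierarchical decomposition into the patches $\Rk^{(k)}(A_{r_1,r_2})$, the Minkowski-type bound $\Lk^d(U(\partial A_{r_1,r_2},t))\lesssim t\,r_2^{d-1}$ for the two cube surfaces, and the geometric series with ratio $|\theta|/\lam^{d-1}>1$ dominated by its top term of size $(r_2-r_1)^{\alpha-(d-1)}$. The only divergence is in how the large-$k$ end of the sum is controlled: the paper notes that $\Rk^{(k)}(A_{r_1,r_2})=\es$ as soon as $\lam^{k}\eta>r_2-r_1$ (a tile of order $k$ contains a ball of radius $\lam^{k}\eta$ which must fit inside the annulus), so the series simply terminates at a $k_0$ with $(|\theta|/\lam^{d-1})^{k_0}\le((r_2-r_1)/\eta)^{\alpha-(d-1)}$, whereas you retain the tail and dispose of it with a separate volume bound beyond the crossover $\lam^{k^*}\asymp r_2-r_1$ --- both yield the same exponent, the paper's route being marginally shorter since it needs neither the second counting bound nor the case distinction at $|\theta|=\theta_1$.
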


\noindent {\bf Remark.} Taking $r_1=0$ we obtain the upper bound $C_3 r_2^\alpha$, which agrees with (\ref{eq-hoelder}), since
$s=1$ for the eigenvector $v$.

\begin{proof}
We have
by finite additivity:
$$
\Phi^+_{v,\Tk}(Q_{r_2}) - \Phi^+_{v,\Tk}(Q_{r_1})=\Phi^+_{v,\Tk}(A_{r_1,r_2}).
$$
Consider $\Rk^{(k)}(A_{r_1,r_2})$ as defined in (\ref{def-rk}); recall that
$$
\# \Rk^{(k)}(A_{r_1,r_2}) \le \Lk^d(U(\partial A_{r_1,r_2}, d_{\max} \lam^{k+1}))a_{\min}^{-1}\lam^{-dk}
$$
by (\ref{eq-estrk}).
Clearly, $\partial A_{r_1,r_2}=\partial Q_{r_1} \cup \partial Q_{r_2}$. The following claim is elementary.

\medskip

\noindent {\bf Claim.} {\em For any $r>0$ and $t\in (0,r)$,
\be \label{eq-claim}
\Lk^d(U(\partial Q_r,t/2))< d\, 2^d t r^{d-1}.
\ee
}
Indeed, we have $\Lk^d(U(\partial Q_r,t/2))< (r+t)^d-(r-t)^d$ whence (\ref{eq-claim}) follows
by a simple calculus exercise.

\smallskip

Therefore, for all $0\le r_1 < r_2$,
$$
\Lk^d(U(\partial A_{r_1,r_2}, t/2)) \le \Lk^d(U(\partial Q_{r_1},t/2)) + \Lk^d(U(\partial Q_{r_2},t/2))\le d\,2^{d+1} t r_2^{d-1}.
$$
Thus,
\be \label{108}
\# \Rk^{(k)}(A_{r_1,r_2}) \le C(d,\om) \lam^{-(d-1)k}r_2^{d-1},\ \mbox{where}\ \ C(d,\om) = d\, 2^{d+2} d_{\max} \lam a_{\min}^{-1}.
\ee
Let $k_0=\max\{k\in \Z:\ \Rk^{(k)}(A_{r_1,r_2}) \ne \es\}$. 
We have
$$
\Phi^+_{v,\Tk}(A_{r_1,r_2})=\sum_{k=-\infty}^{k_0}\ \sum_{T\in \Rk^{(k)}(A_{r_1,r_2})} \Phi^+_{v,\Tk}(\supp(T)).
$$
Then we obtain, using (\ref{108}),
\begin{eqnarray} 
|\Phi^+_{v,\Tk}(A_{r_1,r_2})| & \le & C(d,\om) r_2^{d-1} \sum_{k=-\infty}^{k_0} \frac{\|(\Sf^t)^k v\|}{\lam^{(d-1)k}} \nonumber \\
                              & =   &  C(d,\om) r_2^{d-1} \sum_{k=-\infty}^{k_0} \frac{|\theta|^k}{\lam^{(d-1)k}}  \nonumber \\
                              & =   & \frac{C(d,\om)r_2^{d-1}}{1-\frac{\lam^{d-1}}{|\theta|}} \left( \frac{|\theta|}{\lam^{d-1}}\right)^{k_0}.
\label{dumb1}
\end{eqnarray}
It remains to estimate $k_0$. By definition, a tile of order $k_0$ must be contained in $A_{r_1,r_2}$. Let $\eta>0$ be
such that every $\Tk$ prototile contains a ball of radius $\eta$ in its interior. Then a ball of radius $\lam^{k_0}\eta$
must be contained in $A_{r_1,r_2}$. It is easy to see (we do not attempt to get a sharp estimate here) that 
$$
\lam^{k_0}\eta \le r_2-r_1,
$$
since the center of the ball must have at least one coordinate in the interval $[r_1,r_2]$.
It follows that 
$$
\left( \frac{|\theta|}{\lam^{d-1}}\right)^{k_0} \le \left(\frac{r_2-r_1}{\eta}\right)^{\frac{\log|\theta|}{\log\lam}-(d-1)},
$$ 
whence (\ref{dumb1}) yields
\be \label{dumb2}
|\Phi^+_{v,\Tk}(A_{r_1,r_2})| \le C_3 r_2^{d-1}(r_2-r_1)^{\frac{\log|\theta|}{\log\lam}-(d-1)},
\ee
with the constant $C_3$ depending only on the tiling substitution $\om$, as desired.
\end{proof}

%%%%%%%%%%%%%%%%%%%%%%%%%%%%%%%%%%%%%%%%%%%%%%%%%%%%%%%%%%%%%%%%%%%%%%%%%%%%%%%%%%%%%%%%%

\section{Finitely-additive measures on transversals and statement of the main theorem}

Recall that the ``Euclidean leaf,'' or the translation orbit, of a tiling $\Tk\in X_\om$ is the unstable set for the substitution map $\om$.
The stable leaf is a {\em transversal}, which we now define, and which is topologically 
a Cantor set for aperiodic tilings.
We then proceed to the construction of finitely-additive measures on the transversals. This construction is naturally dual to the one 
in the previous section.

\begin{defi} \label{def-trans}
For an admissible patch $P$ of tiles in the space $X_\om$ the set
$$
\Gam_{\om,P}:= \{\Tk\in X_\om:\ P \subset \Tk\}
$$
is called the transversal associated with the patch $P$.
\end{defi}

The tiling space $X_\om$ has a local product structure:
$$
X_\om \approx \Bigl(\bigcup_{j=1}^m (A_j \times \Gam_{\om,T_j})\Bigr)\mbox{\Large /}\sim\,,
$$
where $\approx$ is a natural homeomorphism, $T_j$ are the prototiles, $A_j=\supp(T_j)$, and the quotient $\sim$ corresponds to a certain ``gluing''
along the boundaries of tiles, see \cite{AP} for details. In fact, $X_\om$ can be considered as a 
translation surface or $\R^d$-solenoid \cite{BG,Gamb}.

If a patch $P\subset \Tk$ is such that $\supp(P)$ contains the origin in its interior, then
$\Gam_{\om,P}$ is a stable set of $\Tk$ for $\om$, in the sense that
$$
d(\om^k(\Tk'),\om^k(\Tk))\le c\lam^{-k}\ \ \mbox{for all}\ \Tk'\in \Gam_{\om,P},\ k\in \N,
$$
by the definition of the tiling metric $d$.

Now let us derive some properties of the transversals.
It is clear that 
\be \label{eq-trans1}
\om(\Gam_{\om,P}) \subset \Gam_{\om,\om(P)},\ \ \ \Gam_{\om,y+P} = y+\Gam_{\om,P}. 
\ee
We don't have equality in the inclusion above, however, we do have
\be \label{eq-trans2}
\Upsilon_k(\Gam_{\phi^k \om, \phi^k P}) = \Gam_{\phi^{k-1}\om, \phi^{k-1}P},
\ee
where $\Upsilon_k$ is the subdivision map from (\ref{eq-subdiv}).
Then how can we describe $\om(\Gam_{\om,P})$ precisely? This is the set of tilings $\Tk\in X_\om$ whose ``super-tiling'' $\Tk^{(1)}$ contains
the patch $\phi P$. Thus, we have
\be \label{decomp2}
\Gam_{\om,T_i} = \bigcup_{j=1}^m \bigcup_{x\in D_{ij}} (\om(\Gam_{\om,T_j}) - x),
\ee
where $D_{ij}$ are from (\ref{eq-dij}),
and this is a disjoint union.

Before we define the finitely-additive measures on the transversals, 
it is worthwhile to recall the formula for the unique invariant measure $\mu$. 
(We know that primitive self-affine tiling dynamical systems with finite local complexity are uniquely ergodic by
Theorem~\ref{unerg}.)
For a patch $P\subset \Tk\in X_\om$ and
$U\subset \R^d$, define
the set
$$
X_{P,U}:= \{\Sk\in X_\om:\, P-y\subset \Sk\ \mbox{for some}\ y\in U\}.
$$
Let $\eta>0$ be such that every prototile contains a ball of diameter $\eta$ in its interior.
It is clear (see e.g.\ \cite[Lemma 1.6]{So1}) that the sets $X_{P,U}$, with $\diam(U)\le \eta$ and $U$ open, 
generate the topology on the tiling space $X_\om$.
It is proved in \cite[Corollary 3.5]{So1} that the unique invariant measure $\mu$ satisfies
\be \label{eq-unerg}
\mu(X_{P,U})=\freq(P)\cdot\Lk^d(U)\ \ \mbox{for}\ P\subset \Tk\in X_\om\ \mbox{and $U$ Borel, with $\diam(U)\le \eta$},
\ee
where $\freq(P)$ is the uniform frequency of the patch $P$ in $\Tk$. (The existence of uniform frequencies is shown, e.g., in
\cite[Lemma A.6]{LMS2}.) In particular, we have
\be \label{eq-unerg2}
\mu(X_{T_j,U})=\freq(T_j)\cdot\Lk^d(U)
\ee
for a small enough $U$. It is well-known that 
\be \label{eq-PF}u^{(1)}:= (\freq(T_j))_{j\le m}
\ee 
is the Perron-Frobenius eigenvector of
the substitution matrix $\Sf$, normalized by the condition
$\langle v^{(1)},u^{(1)}\rangle=1$, where
$v^{(1)} = (\Lk^d(A_j))_{j=1}^m$ is a Perron-Frobenius eigenvector of $\Sf^t$.
Here and for the rest of the paper we are using the bilinear pairing in $\C^m$:
\be \label{pairing}
\langle v, u\rangle = \sum_{j=1}^m v_j u_j.
\ee

We should also note that there is a notion of {\em transverse measure} on a transversal $\Gam$.
It is a Borel measure $\nu$ on $\Bk(\Gam)$ such
that $\nu(A)=\nu(A-y)$ for every $A\in \Bk(\Gam)$ and $y\in \R^d$ such that $A-y\subset \Gam$. There is a 1-to-1 correspondence between
finite positive transverse measures and finite invariant measures for the tiling system, see \cite[Section 5]{BBG}.
In our case this is manifested by  (\ref{eq-unerg}) and (\ref{eq-unerg2}).

Next we proceed to define finitely-additive measures on the transversals $\Gam_{\om,T_i-x}$
for $i\le m$ and $x\in \R^d$. We can also define them on the transversals $\Gam_{\om,P}$ for more
general patches $P$, but that will not be necessary.

For $\Sf$ we have the direct-sum decomposition
$$
\C^m = \wtil{E}^+ \oplus \wtil{E}^-,
$$
where $\wtil{E}^+$ is spanned by Jordan cells of eigenvalues of $\Sf$ with absolute value greater than 1.
For $u \in \wtil{E}^+$, $j\le m$, $y\in \R^d$, and $k\ge 0$ let
\be \label{eq-defo}
\Phi^-_{u}(\om^k(\Gam_{\om,T_j-y})) = (\Sf^{-k} u)_j.
\ee
We have
$$
\om^k(\Gam_{\om,T_j-y}) \subset 
\Gam_{\om,T_i-x}\ \Longleftrightarrow\  T_i-x\in \om^k(T_j-y).
$$
We claim that for each $u \in \wtil{E}^+$,
(\ref{eq-defo}) defines a finitely-additive measure on the algebra of subsets of $\Gam_{T_i-x}$
generated by the sets $\om^k(\Gam_{\om,T_j-y})$, with
$j\le m$, $k\ge 0$ and $y\in \R^d$ such that $T_i-x\in \om^k(T_j-y)$.
It is enough to verify finite additivity in (\ref{decomp2}), since the general case reduces to it easily. We have
\begin{eqnarray*}
\Phi^-_{u}(\Gam_{\om,T_i}) = u_i = \sum_{j=1}^m \Sf_{ij} (\Sf^{-1}u)_j & = & \sum_{j=1}^m \Sf_{ij} \Phi^-_{u}(\om(\Gam_{\om,T_j}))\\
& = & \sum_{j=1}^m \sum_{x\in D_{ij}}\Phi^-_{u}(\om(\Gam_{\om,T_j}) - x),
\end{eqnarray*}
as desired.

We are not going to discuss the extension of $\Phi^-_{u}$ to a larger class of sets, but we will need finitely-additive measures
$m_{\Phi^-_{u}}$,
defined locally as the product $\Phi_u^-\times \Lk^d$, on the tiling space $X_\om$. 
In order to make this precise, we define the class of ``test functions'' which we will be dealing with, and we will
define their integrals with respect to $m_{\Phi^-_{u}}$.

\begin{defi} \label{good-func}
A function $f$ on
$X_\om$ is called {\em cylindrical} if it is integrable with respect to the unique invariant measure $\mu$ and depends only on the tile containing the origin,
that is,
$$
\exists\,i\le m,\ x\in \R^d,\ 0\in \supp(T_i)-x,\ T_i-x\in \Tk\cap\Tk'\ \Longrightarrow\ f(\Tk)=f(\Tk').
$$
\end{defi}

A cylindrical function may be identified with a family of functions
$\{\psi_i\}_{i\le m}$, where $\psi_i:\,A_i=\supp(T_i)\to \R$,
\ $\psi_i\in L^1(A_i)=L^1(A_i,\Lk^d)$ as follows:
$$
f(\Tk)=\psi_i(x)\ \ \mbox{if}\ 0\in A_i-x,\ \ T_i-x\in \Tk.
$$
The functions $\psi_i$ are only defined $\Lk^d$-a.e., which does not cause a problem since we will integrate 
cylindrical functions with 
respect to $\Phi^-_{u}\times \Lk^d$. 
The simplest cylindrical function is the characteristic function of a prototile $T_i$,
which is defined by $\psi_i\equiv 1$, $\psi_j\equiv 0$, $j\ne i$.

Now we define for any cylindrical $f$:
\be \label{int2}
m_{\Phi^-_{u}}(f):=\sum_{i=1}^m \Phi^-_{u}(\Gam_{\om,T_i}) \Lk^d(\psi_i) = \sum_{i=1}^m u_i \int_{A_i} \psi_i(y)\,dy.
\ee

\noindent{\bf Remarks.} 1. Finitely-additive measures $m_{\Phi^-_{u}}$, for $u\in \wtil{E}^+$, are invariant under the dynamics: this follows from
(\ref{int2}) and the fact that 
$$
\Phi^-_{u}(y+\Gam_{\om,P}) = \Phi^-_{u}(\Gam_{\om,y+P}) = \Phi^-_{u}(\Gam_{\om,P}).
$$

2. Let $u^{(1)}\in \C^m$ be the Perron-Frobenius eigenvector of the substitution matrix $\Sf$, 
normalized by the condition $\langle v^{(1)},u^{(1)}\rangle=1$.
As already mentioned, $u^{(1)}_j$ is the uniform frequency of tiles of type $j$ in the tilings $\Tk\in X_\om$. 
Thus, in view of (\ref{eq-unerg2}), $m_{\Phi^-_{u^{(1)}}}$ is
exactly the invariant probability measure $\mu$ on the tiling space. 
For a cylindrical function $f$ we denote by $\|f\|_1$ its norm in $L^1(X_\om,\mu)$; observe that
$$
\|f\|_1 = \sum_{i=1}^m u^{(1)}_i \int_{A_i}|\psi_i(y)|\,dy.
$$

3. Cylindrical functions are not dense in $L^1(X_\om,\mu)$; however, it follows from \cite[Lemma 1.6]{So1} that
the set of functions
$\{f\circ \om^{-k}:\ f\ \mbox{is cylindrical},\ k\in \N\}$ is dense.
Thus it is useful to compute $m_{\Phi^-_{u}}(f\circ \om^{-k})$ explicitly.
If $f$ is the characteristic function of a tile $T_j$, then $f\circ \om^{-k}$ is the characteristic function
of the super-tile $\phi^k T_j$, that is, $f\circ \om^{-k}(\Tk)=1$ if and only if $\Tk \in \om^k(\Gam_{\om,T_j})-y$ for
some $y\in \phi^k A_j$. Thus, it follows from
(\ref{eq-defo}) that
$$
m_{\Phi^-_{u}}(f\circ \om^{-k})=\sum_{i=1}^m (\Sf^{-k} u)_i\int_{\phi^k A_i} \psi_i\circ \phi^{-k}(y)\,dy
=\lam^{dk}\sum_{i=1}^m (\Sf^{-k} u)_i\int_{A_i} \psi_i (x)\,dx,
$$
keeping in mind that $|\det(\phi)|=\lam^d$.
In particular, if $\Sf u = \theta u$, then
\be \label{eq-defon}
m_{\Phi^-_{u}}(f\circ \om^{-k})=\theta^{-k} \lam^{dk} m_{\Phi^-_{u}}(f).
\ee

\medskip

Denote by $\wtil{E}^{++}$ the {\em rapidly expanding subspace} for the matrix $\Sf$, which is, by definitions, the linear span of Jordan cells for
$\Sf$ corresponding to eigenvalues
greater than $\theta_1^{\frac{d-1}{d}}=\lam^{d-1}$.
In our first main theorem, which we state below, only the finitely-additive measures $m_{\Phi^-_{u}}$ with $u$ in the rapidly expanding subspace 
play a role, since only their contribution dominates the ``boundary effects.'' %This is a phenomenon which only appears for $d\ge 2$.

Choose a basis $\{v^{(i)}\}_{i=1}^m$ for $\C^m$, consisting of eigenvectors and root vectors of $\Sf^t$, according to the ordering of the eigenvalues
$$
\theta_1=\lam^d> |\theta_2|\ge...\ge |\theta_m|
$$
(the eigenvalues are counted with algebraic multiplicity).
We set 
$$
v^{(1)} = (\Lk^d(A_j))_{j=1}^m,
$$
as discussed above. Then consider the dual basis $\{u^{(j)}\}_{j=1}^m$, so that $\langle v^{(i)}\,,\,u^{(j)}\rangle = \delta_{ij}$.
This agrees with the definition of $u^{(1)}$ in (\ref{eq-PF}). The vectors $\{u^{(j)}\}_{j=1}^m$ are the eigenvectors and root vectors of $\Sf$,
so that $\Sf u^{(j)}=\theta u^{(j)}$ if and only if $\Sf^t v^{(j)}=\theta v^{(j)}$ (note that we do not need to put complex conjugation, by our
definition of the pairing (\ref{pairing})).
Let $\ell$ be the dimension of the rapidly expanding subspace $E^{++}$, that is, 
$$
|\theta_\ell|>\theta_1^{\frac{d-1}{d}}\ \ \ \mbox{and}\ \ \ |\theta_{\ell+1}|\le \theta_1^{\frac{d-1}{d}}.
$$
Then $\{v^{(j)}\}_{j=1}^\ell$ is a basis for $E^{++}$ and $\{u^{(i)}\}_{i=1}^\ell$ is a basis for $\wtil{E}^{++}$.
Denote 
$$
\Phi^+_{j,\Tk}:= \Phi^+_{v^{(j)},\Tk}\ \ \mbox{and} \ \ \Phi^-_{i}:= \Phi^-_{u^{(i)}}.
$$

\begin{theorem} \label{th-main2} Let $(X_\om,\R^d)$ 
be a non-periodic self-similar tiling dynamical system of finite local complexity, let $\mu$ be the unique
invariant probability measure, and let $\Om$ be a bounded Lipschitz
domain in $\R^d$.
Then there exists a constant $C=C(\om,\Om)>0$, such that for any cylindrical function $f$ and any $\Tk\in X_\om$:
\begin{eqnarray} 
& & \left|\int_{\Om_R} f(\Tk - y)\,dy\ - \Lk^d(\Om_R) \int_{X_\om} f\,d\mu - 
\sum_{n=2}^\ell \Phi^+_{n,\Tk}(\Om_R)\cdot m_{\Phi^-_{n}}(f)\right|  \label{eq-th2} \\[1.2ex]
&  \le & \
CR^{d-1} (\log R)^s \|f\|_1, \ \ \mbox{for all}\ R\ge 2, \nonumber
\end{eqnarray}
where $s$ is the maximal size of the Jordan block corresponding to eigenvalues satisfying $|\theta|=\theta_1^{\frac{d-1}{d}}$ 
(if there are no such 
eigenvalues, then $s=0$). 
\end{theorem}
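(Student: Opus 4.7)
The plan is to reduce the ergodic integral to a linear combination of tile counts, dualise via the basis of generalised eigenvectors of $\Sf^t$, and then isolate the main Lebesgue contribution, the rapidly-expanding corrections given by $\Phi_{n,\Tk}^+(\Om_R)$ for $n\le \ell$, and the slow-mode remainder which must be absorbed into the $R^{d-1}(\log R)^s$ error. Writing the cylindrical function as a family $\{\psi_i\}$ on $\{A_i\}$, I would first replace $\Om_R$ by $\supp(\Tk|_{\Om_R})$. Since every $y$ in the symmetric difference lies in a $\Tk$-tile meeting $\partial\Om_R$, and the number of such tiles is $O(R^{d-1})$ by (\ref{eq-gmt}), while $\int_{A_i}|\psi_i|\le C\|f\|_1$ (using that the Perron--Frobenius eigenvector $u^{(1)}$ has strictly positive entries), this step contributes an error $O(R^{d-1}\|f\|_1)$ and leaves
$$\int_{\supp(\Tk|_{\Om_R})} f(\Tk-y)\,dy = \sum_{i=1}^m N_i(\Om_R,\Tk)\,c_i,\qquad c_i=\int_{A_i}\psi_i.$$

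Next I would expand $c=(c_i)\in\C^m$ in the basis $\{v^{(j)}\}_{j=1}^m$: $c=\sum_j \alpha_j v^{(j)}$, where by definition of the dual basis and (\ref{int2}),
$$\alpha_j=\langle c,u^{(j)}\rangle=\sum_i u^{(j)}_i \int_{A_i}\psi_i = m_{\Phi^-_j}(f).$$
Extend $\Phi^+_{v,\Tk}$ linearly in $v$ to arbitrary $v\in\C^m$ on the sub-ring generated by supertiles of nonnegative order (this is consistent because going from level $k+1$ to level $k$ only uses $\Sf^t$, never $(\Sf^t)^{-1}$). Then
$$\sum_i c_i N_i(\Om_R,\Tk) = \sum_{j=1}^m m_{\Phi^-_j}(f)\cdot \Phi^+_{v^{(j)},\Tk}(\supp(\Tk|_{\Om_R})).$$
For $j=1$, the identity $v^{(1)}=(\Lk^d(A_i))_i$ gives $\Phi^+_{v^{(1)},\Tk}=\Lk^d$ on tile supports, so this term equals $\Lk^d(\supp(\Tk|_{\Om_R}))\int f\,d\mu=\Lk^d(\Om_R)\int f\,d\mu+O(R^{d-1}\|f\|_1)$. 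For $2\le j\le \ell$, Lemma~\ref{lem-finadd2.cor} lets me swap $\Phi^+_{v^{(j)},\Tk}(\supp(\Tk|_{\Om_R}))$ for $\Phi^+_{j,\Tk}(\Om_R)$ at cost $O(R^{d-1})$, and $|m_{\Phi^-_j}(f)|\le C\|f\|_1$ by the same positivity argument.

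The main obstacle is the contribution of the slow modes $j>\ell$, where $|\theta_j|\le\lam^{d-1}$ and $\Phi^+_{v^{(j)},\Tk}$ is not defined on Lipschitz domains. Here I would exploit the hierarchical packing dual to the one used in Lemma~\ref{lem-finadd2}: write $\supp(\Tk|_{\Om_R})=\bigsqcup_{k=0}^{k_R}\bigsqcup_{T\in\Rk^{(k)}(\Om_R)}\supp(T)$, where $\Rk^{(k)}(\Om_R)$ collects the maximal $k$-supertiles contained in $\Om_R$. Finite additivity combined with (\ref{eq-mdef1}) then yields
$$\bigl|\Phi^+_{v^{(j)},\Tk}(\supp(\Tk|_{\Om_R}))\bigr|\le \sum_{k=0}^{k_R}\#\Rk^{(k)}(\Om_R)\cdot\bigl\|(\Sf^t)^k v^{(j)}\bigr\|.$$
Plug in $\#\Rk^{(k)}(\Om_R)\le C'R^{d-1}\lam^{-(d-1)k}$ from (\ref{103}) and the Jordan estimate $\|(\Sf^t)^k v^{(j)}\|\le C(k+1)^{s_j-1}\max(|\theta_j|^k,1)$ from Lemma~\ref{lem-jordan}. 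The resulting sum on $k$ has ratio $|\theta_j|/\lam^{d-1}$: for $|\theta_j|<\lam^{d-1}$ the series is geometric and bounded by a constant, giving $O(R^{d-1})$; for $|\theta_j|=\lam^{d-1}$ the sum $\sum_{k=0}^{k_R}(k+1)^{s_j-1}$ grows like $(k_R)^{s_j}$, and since $k_R\asymp \log R/\log\lam$ we pick up the factor $(\log R)^{s_j}$. Taking the maximum over critical eigenvalues produces the $(\log R)^s$ factor in the theorem, and multiplying by $|m_{\Phi^-_j}(f)|\le C\|f\|_1$ and summing in $j$ completes the proof. The delicate point throughout is to verify that only eigenvalues on the critical circle $|\theta|=\lam^{d-1}$ generate the log correction, while sub-critical and unimodular eigenvalues are absorbed into $O(R^{d-1}\|f\|_1)$.
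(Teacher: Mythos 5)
Your proposal is correct and follows essentially the same route as the paper's proof: reduce the integral to tile counts modulo an $O(R^{d-1}\|f\|_1)$ boundary error, expand in the biorthogonal bases $\{v^{(n)}\},\{u^{(n)}\}$ so that the coefficients become $m_{\Phi^-_n}(f)$, use the hierarchical packing $\Rk^{(k)}(\Om_R)$ together with Lemma~\ref{lem-finadd2.cor} for the modes $n\le\ell$, and bound the slow modes via (\ref{103}) and the Jordan estimates to get the $R^{d-1}(\log R)^s$ remainder. The only differences are presentational (you treat all prototile types at once and package the slow-mode sum as a finitely-additive set function on the non-negative-order sub-ring, whereas the paper reduces to a single prototile and estimates the raw double sum $I_2$ directly), and these do not change the argument.
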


\noindent {\bf Remarks.} 
1. The second term in (\ref{eq-th2}) can be written in a way consistent with the sum that follows:
for a cylindrical $f$,
$$
\int_{X_\om} f\,d\mu=m_{\Phi^-_1}(f)\ \ \mbox{and}\ \ \Lk^d(\Om_R) = \Phi^+_{1,\Tk}(\Om_R).
$$

2. We can formally interpret (\ref{eq-th2}) also in the case when $|\theta_2|\le \theta_1^{\frac{d-1}{d}}$; then
$\ell=1$ and the sum in the formula (\ref{eq-th2}) is zero.

\medskip

It is not hard to extend (\ref{eq-th2}) to functions of the form $f\circ\om^{-k}$, where $k\in \N$ and $f$ is cylindrical, which
form a dense subset of $L^1(X_\om,\mu)$.
In the next corollary, for simplicity, we assume that 
$\Sf$ has no Jordan blocks in the rapidly expanding subspace and either $\Om$ is the ball centered at the origin, or $\phi$ is
a pure dilation.

\begin{corollary} \label{cor-main}
Under the assumptions of Theorem~\ref{th-main2}, suppose, in addition, that $\Sf$ has no Jordan blocks in $\wtil{E}^{++}$,
and the finitely-additive
measures $\Phi^+_{n,\Tk}, \Phi^-_{n}$ correspond to eigenvectors of $\Sf^t$ and $\Sf$ respectively, 
with eigenvalues $\theta_n$, for $n\le \ell$. Moreover, assume that either $\Om$ is the ball centered at the origin, or $\phi$ is
a pure dilation. 
Then we have for any cylindrical function $f$ and $k\in \N$:
\begin{eqnarray}
& & \left|\int_{\Om_R} f\circ \om^{-k}(\Tk - y)\,dy\ - \Lk^d(\Om_R) \int_{X_\om} f\circ \om^{-k}\,d\mu-\right.  \label{eq-corr} \\[1.2ex]
& &- \left.\sum_{n=2}^\ell \Phi^+_{n,\Tk}(\Om_R)\cdot m_{\Phi^-_{n}}(f\circ \om^{-k})\right| \nonumber
\le CR^{d-1} \lam^k (\log (\lam^{-k} R))^s \|f\circ \om^{-k}\|_1, 
\end{eqnarray}
for all $R\ge 2\lam^k$,
where $C$ is the constant from Theorem~\ref{th-main2}.
\end{corollary}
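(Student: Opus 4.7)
The plan is to reduce the Corollary to Theorem~\ref{th-main2} by performing a substitution-compatible change of variables that trades $f\circ\om^{-k}$ for the cylindrical function $f$ itself. Starting from the identity $\om(\Tk-y)=\om(\Tk)-\phi y$ in (\ref{eq-infl}), iterated $k$ times, I would write $\om^{-k}(\Tk-y)=\om^{-k}(\Tk)-\phi^{-k}y$, then substitute $z=\phi^{-k}y$ with Jacobian $\lam^{-dk}$. This gives
\be
\int_{\Om_R} f\circ\om^{-k}(\Tk-y)\,dy = \lam^{dk}\int_{\phi^{-k}\Om_R} f(\om^{-k}(\Tk)-z)\,dz. \label{plan-cov}
\ee
Under the extra hypothesis that either $\Om$ is a ball centered at the origin or $\phi$ is a pure dilation, the rescaled set $\phi^{-k}\Om_R$ is exactly $\Om_{R'}$ with $R':=R/\lam^k$, so it is a Lipschitz domain of the same shape as $\Om$. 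This is precisely the geometric point where the additional assumption is used; without it, $\phi^{-k}\Om$ would be a rotated copy of $\Om$ and one would need to argue that the constant in Theorem~\ref{th-main2} is rotation-invariant.

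Since $R'\ge 2$ by the hypothesis $R\ge 2\lam^k$, I can apply Theorem~\ref{th-main2} to the tiling $\om^{-k}(\Tk)\in X_\om$, the cylindrical function $f$, and the domain $\Om$ at radius $R'$. This yields
\begin{align*}
\int_{\Om_{R'}} f(\om^{-k}(\Tk)-z)\,dz &= \Lk^d(\Om_{R'})\!\int_{X_\om}\! f\,d\mu + \sum_{n=2}^{\ell}\Phi^+_{n,\om^{-k}(\Tk)}(\Om_{R'})\,m_{\Phi^-_n}(f) + E,
\end{align*}
with $|E|\le C(R')^{d-1}(\log R')^s\|f\|_1$. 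Multiplying by $\lam^{dk}$ and comparing with the LHS of (\ref{plan-cov}), one now has to identify each of the three resulting terms with the corresponding quantities on the RHS of (\ref{eq-corr}).

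The identification is a direct book-keeping with three inputs. For the mean term, $\lam^{dk}\Lk^d(\Om_{R'})=\Lk^d(\Om_R)$ and $\int f\,d\mu=\int f\circ\om^{-k}\,d\mu$ by the $\om$-invariance of $\mu$ (Lemma~\ref{lem-invmeas}). For each deviation term, the cocycle (\ref{coc3}) applied $k$ times to the eigenvector $v^{(n)}$ (with $\Sf^tv^{(n)}=\th_n v^{(n)}$) gives $\Phi^+_{n,\Tk}(\Om_R)=\Phi^+_{n,\om^k(\om^{-k}\Tk)}(\phi^k\Om_{R'})=\th_n^k\Phi^+_{n,\om^{-k}(\Tk)}(\Om_{R'})$, while the scaling formula (\ref{eq-defon}) yields $m_{\Phi^-_n}(f)=\th_n^k\lam^{-dk}m_{\Phi^-_n}(f\circ\om^{-k})$; the factors of $\th_n^{\pm k}$ and $\lam^{\pm dk}$ then cancel to produce $\Phi^+_{n,\Tk}(\Om_R)\,m_{\Phi^-_n}(f\circ\om^{-k})$. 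The assumption that $\Sf$ has no Jordan blocks in $\wtil E^{++}$ guarantees this clean scaling at each eigenvalue $\th_n$ for $n\le\ell$. Finally, the error term becomes
$$
\lam^{dk}\cdot C(R')^{d-1}(\log R')^s\|f\|_1 = C\lam^{k}R^{d-1}(\log(\lam^{-k}R))^s\|f\circ\om^{-k}\|_1,
$$
using $\|f\|_1=\|f\circ\om^{-k}\|_1$, again by $\om$-invariance of $\mu$.

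The main potential obstacle is not any single analytic estimate but rather the careful matching of the $\th_n^k$ and $\lam^{dk}$ factors coming from three different sources --- the Jacobian of $\phi^{-k}$, the cocycle for $\Phi^+$ on the Euclidean leaf, and the scaling of $m_{\Phi^-_n}$ along the transversal. The absence of Jordan blocks in $\wtil E^{++}$ is what makes these exactly scalar multiplications rather than polynomially-growing matrix powers, and the geometric hypothesis on $\Om$ or $\phi$ is what lets $\phi^{-k}\Om_R$ re-enter Theorem~\ref{th-main2} with the same constant $C=C(\om,\Om)$.
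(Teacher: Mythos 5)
Your proposal is correct and follows essentially the same route as the paper's own proof: the change of variables $z=\phi^{-k}y$, the identification $\phi^{-k}\Om_R=\Om_{\lam^{-k}R}$ via the geometric hypothesis, an application of Theorem~\ref{th-main2} to $\om^{-k}\Tk$ at radius $\lam^{-k}R$, and the cancellation of the $\theta_n^{\pm k}$ and $\lam^{\pm dk}$ factors coming from (\ref{coc3}), (\ref{eq-defon}), and the Jacobian. Your explicit remarks on where each extra hypothesis enters, and on $\|f\|_1=\|f\circ\om^{-k}\|_1$ via $\om$-invariance of $\mu$, only make explicit what the paper leaves implicit.
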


\begin{proof}
We have 
\begin{eqnarray}
\int_{\Om_R} f\circ \om^{-k}(\Tk - y)\,dy &  = & \int_{\Om_R} f(\om^{-k}\Tk - \phi^{-k}y)\,dy \nonumber \\
& = &
\lam^{dk} \int_{\phi^{-k} \Om_R} f(\om^{-k}\Tk - x)\,dx. \label{extra}
%& = & \lam^{dk} \int_{\Om_{\lam^{-k}R}} f(\om^{-k}\Tk - x)\,dx, 
\end{eqnarray} 
Observe that
$$
\phi^{-k} \Om_R=\Om_{\lam^{-k}R}
$$
by the assumption on $\Om$ and $\phi$, so we can apply (\ref{eq-th2}), with $\Tk$ replaced by $\om^{-k}\Tk$ and
$R$ replaced by $\lam^{-k}R$. We have
$\Lk^d(\Om_{\lam^{-k}R})=\lam^{-dk}\Lk^d(\Om_R)$, 
$$
\Phi^+_{n,\om^{-k}\Tk}(\phi^{-k} \Om_R)=\theta_n^{-k}\Phi^+_{n,\Tk}(\Om_R)
$$
by (\ref{coc3}), and
$$
m_{\Phi^-_{n}}(f) = \theta_n^k \lam^{-kd} m_{\Phi^-_{n}}(f\circ \om^{-k})
$$
by (\ref{eq-defon}). Since everything is multiplied by $\lam^{dk}$ from (\ref{extra}), all ``extra'' factors cancel out.
In the right-hand side of (\ref{eq-th2}) we will get $C(\lam^{-k}R)^{d-1} (\log (\lam^{-k} R))^s \|f\|_1$,
which is also multiplied by $\lam^{dk}$, and keeping in mind that $\mu$ is $\om$-invariant by Lemma~\ref{lem-invmeas},
we obtain (\ref{eq-corr}).
\end{proof}

Next we deduce upper deviation bounds from  Theorem~\ref{th-main2}.

\begin{corollary} \label{th-main1}
Let $(X_\om,\R^d,\mu)$ be a non-periodic
self-similar tiling dynamical system. Suppose that the substitution matrix ${\sf S}$
has eigenvalues $\th_1,\ldots,\th_m$ (real and complex), counted with multiplicities and ordered in such a way that
$\th_1 > |\th_2|\ge \cdots \ge |\th_m|$.
Further, let $s$ be the size of the largest Jordan block associated with the
eigenvalues of absolute value $|\th_2|$.

Given a bounded Lipschitz domain $\Om$, there exists a constant $\wtil{C}=\wtil{C}(\om,\Om)>0$ 
such that for any cylindrical function $f$, with $\|f\|_1=1$, any
tiling $\Tk\in X_\om$, and $R\ge 2$ we have
$$
\left|\int_{\Om_R} f(\Tk-y)\,dy - \Lk^d(\Om_R) \int_{X_\om} f\,d\mu\right|
              \le \left\{ \begin{array}{ll} \wtil{C} R^{d-1}, & \mbox{if}\ |\th_2| < \th_1^{\frac{d-1}{d}}; \\
                                            \wtil{C}R^{d-1} (\log R)^s, & \mbox{if}\ |\th_2| = \th_1^{\frac{d-1}{d}}; \\
                                            \wtil{C}R^\alpha (\log R)^{s-1}, & \mbox{if}\ |\th_2| >  \th_1^{\frac{d-1}{d}},
                \end{array} \right.
$$
where $\alpha = d\log|\th_2|/\log\th_1 \in (d-1,d)$.
\end{corollary}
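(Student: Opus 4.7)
The plan is to extract the bounds directly from Theorem~\ref{th-main2}, using Lemma~\ref{lem-finadd2} to control each term in the sum indexed by $n=2,\dots,\ell$ and a simple estimate to control the functionals $m_{\Phi^-_n}(f)$.

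First I would bound $m_{\Phi^-_n}(f)$ by a multiple of $\|f\|_1$. Writing $f$ via its cylindrical data $\{\psi_i\}$ and using that all components $u^{(1)}_i$ of the Perron--Frobenius eigenvector are strictly positive, one has $\int_{A_i}|\psi_i|\,dy \le \|f\|_1/u^{(1)}_i$ for each $i$ (since $\|f\|_1=\sum_i u^{(1)}_i \int_{A_i}|\psi_i|\,dy$ is a sum of non-negative terms). Hence
$$
|m_{\Phi^-_n}(f)| \le \sum_{i=1}^m |u^{(n)}_i|\int_{A_i}|\psi_i|\,dy \le K_n\|f\|_1,\qquad K_n:=\sum_{i=1}^m \frac{|u^{(n)}_i|}{u^{(1)}_i},
$$
with $K_n$ depending only on $\om$. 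Combined with Lemma~\ref{lem-finadd2} applied to the basis vector $v^{(n)}\in E^{++}$ with Jordan size $s_n$ and eigenvalue $\theta_n$,
$$
\bigl|\Phi^+_{n,\Tk}(\Om_R)\cdot m_{\Phi^-_n}(f)\bigr| \le C'(\om,\Om)\,(\log R)^{s_n-1}R^{\alpha_n}\|f\|_1,\qquad \alpha_n=\frac{\log|\theta_n|}{\log\lam},
$$
for $R\ge 2$ and $n=2,\dots,\ell$.

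Next I would do the three-case analysis. If $|\theta_2|<\theta_1^{(d-1)/d}$, then no eigenvalue other than $\theta_1$ has absolute value at least $\theta_1^{(d-1)/d}$, so $\ell=1$ (the sum in (\ref{eq-th2}) is empty) and the parameter $s$ in Theorem~\ref{th-main2} is $0$; the theorem directly gives the $R^{d-1}$ bound. If $|\theta_2|=\theta_1^{(d-1)/d}$, again $\ell=1$ and the sum is empty, and the $s$ from Theorem~\ref{th-main2} coincides with the $s$ in the corollary, yielding $R^{d-1}(\log R)^s$. If $|\theta_2|>\theta_1^{(d-1)/d}$, then $\ell\ge 2$ and the dominant contribution comes from those indices $n$ with $|\theta_n|=|\theta_2|$: for these $\alpha_n=\log|\theta_2|/\log\lam=d\log|\theta_2|/\log\theta_1=\alpha$ and $s_n\le s$, giving a bound $\wtil{C}R^\alpha(\log R)^{s-1}$. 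Terms with $|\theta_n|<|\theta_2|$ contribute $R^{\alpha_n}(\log R)^{s_n-1}$ with $\alpha_n<\alpha$ and are absorbed, and the error term $R^{d-1}(\log R)^{s'}$ from Theorem~\ref{th-main2} is negligible because $\alpha>d-1$.

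There is no real obstacle: the argument is bookkeeping that tracks the Jordan sizes and logarithmic corrections for each eigenvalue and selects the dominant term. The one small care point is noting, in Cases 1 and 2, that the exponent $s$ appearing in Theorem~\ref{th-main2} (the max Jordan size among eigenvalues of modulus exactly $\theta_1^{(d-1)/d}$) agrees, or coincides with $0$, in the way needed to match the statement of the corollary.
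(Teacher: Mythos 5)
Your proposal is correct and follows essentially the same route as the paper, which deduces the first two cases directly from Theorem~\ref{th-main2} (noting $\ell=1$ there) and the third case from Theorem~\ref{th-main2} combined with Lemma~\ref{lem-finadd2}. The extra bookkeeping you supply --- the bound $|m_{\Phi^-_n}(f)|\le K_n\|f\|_1$ via positivity of the Perron--Frobenius eigenvector, and the tracking of Jordan sizes and the absorption of the $R^{d-1}(\log R)^{s'}$ error term when $\alpha>d-1$ --- is exactly the detail the paper leaves implicit.
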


\begin{proof} The first two cases are immediate from (\ref{eq-th2}), since then $\ell=1$. The third case also follows from (\ref{eq-th2}), in view of
Lemma~\ref{lem-finadd2}. \end{proof}

It is possible to show that Corollary~\ref{th-main1} is sharp, at least in the special case when the tiles are polyhedral,
in the sense that
the powers of $R$ in the right-hand side in each case cannot be replaced by a smaller power.

\medskip

\noindent {\bf Remarks.} 1. There are a number of results related to Corollary~\ref{th-main1} in the literature.
When $f$ is assumed to be the characteristic function of a  prototile, the corollary reduces to estimates of the rate of
convergence to frequency for prototiles. In the case $d=1$ this is essentially the same as estimating symbolic discrepancy for substitutions, which was
done by Adamczewski \cite{Adamcz}.

Solomon \cite{Solomon1,Solomon2} gives deviation estimates similar to ours for the number of tiles in a ``super-tile'' of high order.
The tiles are assumed to be
bi-Lipschitz equivalent to a ball, but the substitution need not be non-periodic. Under these assumptions, the estimates are shown to be sharp.

Aliste-Prieto, Coronel and Gambaudo \cite{ACG,ACG2} obtain analogous deviation estimates. The paper \cite{ACG}, which deals with the
$d=2$ case, estimates the deviation of average from the frequency for general Jordan domains and for
very general substitution tilings, including non-FLC tilings, the ``pinwheel-like'' tilings and tiles with fractal
boundary. However, the extension to $d>2$ in \cite{ACG2} handles only
the case of ``small'' $\theta_2$ under the stronger assumption $|\theta_2|\le \theta_1^{\frac{1}{d}}$.

Interest in such estimates was inspired by questions on bi-Lipschitz equivalence and bounded displacement of separated nets (also called Delone sets)
arising from primitive substitutions, like the Penrose tiling, to the lattice in $\R^d$, see \cite{BuKl}.

2. Sadun \cite{Sadun} obtained deviation estimates for the number of patches per volume in balls of large radius 
using rational \v{C}ech cohomology, with an error term computable from the patterns that appear on the boundary.

%%%%%%%%%%%%%%%%%%%%%%%%%%%%%%%%%%%%%%%%%%%%%%%%%%%%%%%%%%%%%%%%%%%%%%%%%%%%%%%%%%%%%%%%%%%%%%%%%%%%%%%%%%%

\section{Proof of Theorem~\ref{th-main2}.}

We will use the following notation: for a set $E\subset \R^d$, a tiling $\Tk$, and a patch $P$, denote by $N_P(E,\Tk)$ the number of
translated copies of $P$ in the tiling $\Tk$ whose support is contained in $E$. Since $\Tk$ is now fixed, we will just write $N_P(E)=N_P(E,\Tk)$.

Writing the cylindrical $f$ as a sum over prototiles $i\le m$, we can assume without loss of generality
that $\psi_j\equiv 0$ for $j\ne i$, and let $\psi:=\psi_i$.
Denote
$$
\Ik:=\int_{\Om_R} f(\Tk - y)\,dy.
$$
It follows from the definition of $f$ that if $y$ belongs to a translate of $T_i$ in $\Tk$, that is, $y\in A_i-x$ and $T_i-x\in \Tk$
for some $x\in \R^d$, then
$$f(\Tk-y)=\psi(y+x),$$ and $f(\Tk-y)=0$ otherwise. Thus,
\be \label{eq-integral}
\Ik = \sum_{x: \ (A_i-x)\cap \Om_R\ne \es} \int_{(A_i-x)\cap \Om_R} \psi(y+x)\,dy,
\ee
where the sum is over $x$ such that $T_i-x\in \Tk$.
Every translate of $T_i$ which is contained in $\Om_R$ contributes $\Lk^d(\psi)=\int_{A_i} \psi(y)\,dy$ to $\Ik$, 
and every translate of $T_i$ which intersects
the boundary of $\Om_R$ contributes at most $\|\psi\|_1=(u_i^{(1)})^{-1}\|f\|_1$.
Notice that the number of the translates intersecting the boundary does not exceed
$
\Lk^d(U(\partial \Om_R, d_{\max})) a_{\min}^{-1}.
$
We can write
$$
\Lk^d(U(\partial \Om_R, d_{\max})) =R^d\Lk^d(U(\partial \Om, d_{\max}/R)) \le C(\partial \Om, 1) d_{\max} R^{d-1},\ \ 
\mbox{for}\ R> d_{\max},
$$
by (\ref{eq-gmt}), hence
\be \label{116}
\Ik = N_{T_i}(\Om_R) \Lk^d(\psi) + O(R^{d-1}\|f\|_1),
\ee
where the implied constant in $O(\cdot)$ depends only on $\Om$
and $\om$. Thus it suffices to prove the desired estimate for $N_{T_i}(\Om_R)$.
(Note that $|\Lk^d(\psi)|\le (u_i^{(1)})^{-1}\|f\|_1$, so we will get the factor of $\|f\|_1$ in the right-hand
side of (\ref{eq-th2}).)

By the definition of the substitution matrix $\Sf$, we have
\be \label{106}
\om^k(T_j)-y\in \Tk\ \Rightarrow\ N_{T_i}(\phi^k(A_j)-y) = \Sf^k(i,j)=(\Sf^t)^k(j,i)=\langle (\Sf^t)^k e^{(i)}\,,\,e^{(j)}\rangle,
\ee
where $e^{(i)}$ is the standard $i$-th basis vector.

Recall that we have chosen a basis $\{v^{(n)}\}_{n=1}^m$ for $\C^m$, such that $\{v^{(n)}\}_{n=1}^\ell$ is a basis for
the $\Sf^t$-invariant subspace $E^{++}$, and a dual basis $\{u^{(n)}\}_{n=1}^m$.
Then we have
$$
e^{(i)} = \sum_{n=1}^m \langle e^{(i)}\,,\, u^{(n)}\rangle \,v^{(n)}= \sum_{n=1}^m u^{(n)}_i v^{(n)}.
$$
Therefore,
\be\label{102}
\langle (\Sf^t)^k e^{(i)}\,,\,e^{(j)}\rangle = \sum_{n=1}^m u^{(n)}_i ((\Sf^t)^k  v^{(n)})_j.
\ee
Next we essentially repeat the construction of Lemma~\ref{lem-finadd1} and consider the set
$ \Rk^{(k)}=\Rk^{(k)}(\Om_R) $ defined by (\ref{def-rk}).
Further, let us write $\Rk^{(k)} = \bigcup_{j=1}^m \Rk^{(k)}_j$, where $\Rk^{(k)}_j$ is the set of tiles of order $k$ in $\Rk^{(k)}$ of type $j$.
 Let $k_R=\max\{k:\ \Rk^{(k)}\ne \es\}$.
We have, in view of (\ref{106}) and (\ref{102}),
\begin{eqnarray}
N_{T_i}(\Om_R) & = & \sum_{k=0}^{k_R} N_{T_i} (\supp(\Rk^{(k)})) \nonumber \\
               & = & \sum_{k=0}^{k_R} \sum_{j=1}^m \#\Rk^{(k)}_j 
                      \sum_{n=1}^m u^{(n)}_i ((\Sf^t)^k  v^{(n)})_j \nonumber \\
     & = & \left(\sum_{n=1}^\ell + \sum_{n=\ell+1}^m \right) u^{(n)}_i \sum_{k=0}^{k_R} \sum_{j=1}^m 
                      \#\Rk^{(k)}_j ((\Sf^t)^k  v^{(n)})_j \label{1071} \\
     & =:& I_1 + I_2. \nonumber 
\end{eqnarray}

Recall that
$$
\Phi^+_{n,\Tk}(\supp(T)) = ((\Sf^t)^k  v^{(n)})_j\ \ \mbox{for}\ T\in \Tk^{(k)}\ \mbox{of type}\ j.
$$
Using this and finite-additivity of $\Phi^+_{n,\Tk}$, we can write
$$
I_1 = \sum_{n=1}^\ell  u^{(n)}_i \Phi^+_{n,\Tk}(\supp(\Tk|_{\Om_R})).
$$
By Lemma~\ref{lem-finadd2.cor},
\be \label{shows}
\Phi^+_{n,\Tk}(\supp(\Tk|_{\Om_R}))=\Phi^+_{n,\Tk}(\Om_R)+O(R^{d-1})\ \ \mbox{for}\ n\le \ell,
\ee
where the implied constant depends only on $\Om$ and $\om$.
Recall that $u^{(n)}_i= \Phi_n^-(\Gam_{\om,T_i})$.
Thus (\ref{shows}) yields
\be \label{117}
N_{T_i}(\Om_R)\Lk^d(\psi) = \sum_{n=1}^\ell \Phi^+_{n,\Tk}(\Om_R) \cdot m_{\Phi_n^{-}}(f) + I_2\cdot \Lk^d(\psi)+O(R^{d-1}),
\ee
with the implied constant that depends only on $\Om$ and $\om$.

It remains to estimate $I_2$. We have
$$
|I_2| \le \sum_{n=\ell+1}^m \|u^{(n)}\| \sum_{k=0}^{k_R} \# \Rk^{(k)} \|(\Sf^t)^k v^{(n)}\|.
$$
Below we use the notation $\lesssim$ to indicate inequality up to a multiplicative constant that depends only on $\Om$ and $\om$.
We have 
$$
\# \Rk^{(k)}= \# \Rk^{(k)}(\Om_R) \lesssim R^{d-1} \lam^{-(d-1)k}
$$
by (\ref{103}), and
\be \label{111}
\|(\Sf^t)^k v^{(n)}\| \lesssim k^{s-1}\lam^{(d-1)k}\ \ \mbox{for}\ n\ge \ell+1,\ k>0,
\ee
by the assumption that $v^{(n)}$, with $n\ge \ell+1$, is in the invariant subspace of $\Sf^t$ corresponding to eigenvalues $\theta$,\ 
$|\theta|\le \lam^{d-1}$, and $s$ is the maximal size of the Jordan block of an eigenvalue $\theta$,\ $|\theta|= \lam^{d-1}$.
It follows  that
$$
|I_2| \lesssim R^{d-1}\sum_{k=0}^{k_R} k^{s-1} \lesssim R^{d-1}k_R^s \lesssim R^{d-1}(\log R)^s,
$$
where the last inequality follows from (\ref{eq-kr}). This,
together with (\ref{116}) and (\ref{117}),
completes the proof of (\ref{eq-th2}) in the case when $s\ge 1$. If $s=0$, that is, all remaining eigenvalues are less than 
$\lam^{d-1}$
in absolute value, then we can replace the right-hand side of (\ref{111}) by $\gam^k$ for some $\gam< \lam^{d-1}$, and use that
$\sum_{k=0}^{k_R} \lam^{-k(d-1)} \gam^k \asymp 1$ to obtain
\be \label{eq-rapid}
|I_2|\lesssim R^{d-1}.
\ee
Now the theorem is proved completely. \qed

%%%%%%%%%%%%%%%%%%%%%%%%%%%%%%%%%%%%%%%%%%%%%%%%%%%%%%%%%%%%%%%%%%%%%%%%%%%%%%%%%%%%%%%%%%%%%%%%%%%%

\section{Limit laws for the deviation of ergodic averages} \label{section_limitlaw}

In order to obtain the limit law, we need to make the following additional assumptions:

\medskip

{\bf (A)} {\em the expansion map of the tiling substitution is a pure dilation: $\phi(x) = \lam x,\ \lam>1$;}

\medskip

{\bf (B)} {\em all the $\Tk$-prototiles are polyhedral.}

\medskip

Denote by $\Fk$ the class of {\bf bounded} cylindrical functions on $X_\om$.
For any $f\in \Fk$ and $\Tk\in X_\om$, 
define a continuous function on $[0,1]$ by
\be \label{eq-rv1}
\Frs_n[f,\Tk](r) = \int_{Q_{r\lam^n}} f(\Tk-y)\,dy.
\ee
Recall that $Q_r = [-r/2,r/2]^d$.
We consider $r\mapsto \Frs_n[f,\Tk](r)$ as a random variable on $(X_\om,\mu)$ with the values in $C[0,1]$, endowed with the norm topology.

\begin{theorem} \label{th-limitlaw}
Let $(X_\om,\R^d,\mu)$ be a non-periodic self-similar tiling dynamical system 
satisfying the assumptions {\bf (A)} and {\bf (B)}.
Suppose that the substitution matrix $\Sf$ has a simple positive real second eigenvalue $\theta_2 > 
\lam^{d-1}=\theta_1^{\frac{d-1}{d}}$, and all
other eigenvalues are less than $\theta_2$ in absolute value. Then there is a continuous functional $\beta:\,\Fk \to \R$ and a compactly supported
non-degenerate measure $\nu$ on $C[0,1]$ such that for any $f\in \Fk$ satisfying $\int_{X_\om} f\,d\mu=0$ and $\beta(f)\ne 0$, the
sequence of random variables
$$
\frac{\Frs_n[f,\Tk]}{\beta(f) \theta_2^n}
$$
converges in distribution to $\nu$ as $n\to\infty$.
\end{theorem}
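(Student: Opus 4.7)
The plan is to apply Theorem~\ref{th-main2} to $\Om_R=Q_{r\lam^n}$ (with $R=r\lam^n$, $r\in[0,1]$) and identify the dominant contribution after division by $\theta_2^n$. Under the spectral hypothesis, $\theta_2$ is a simple eigenvalue of $\Sf^t$ with one-dimensional eigenspace in $E^{++}$, and every other eigenvalue $\theta_j$ appearing in the sum of (\ref{eq-th2}) satisfies $|\theta_j|<\theta_2$. The $j=2$ summand is $\Phi^+_{2,\Tk}(Q_{r\lam^n})\cdot m_{\Phi^-_2}(f)$; by Lemma~\ref{lem-finadd2}, every $j\ge 3$ summand is $o(\theta_2^n)$ uniformly in $r\in[0,1]$ and $\Tk$, and the error term $O((r\lam^n)^{d-1}(\log(r\lam^n))^s\|f\|_1)$ is also $o(\theta_2^n)$ because $\lam^{d-1}<\theta_2$.

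Assumption (A) gives $\phi^n(Q_r)=Q_{r\lam^n}$, so iterating (\ref{coc3}) yields
$$
\Phi^+_{2,\Tk}(Q_{r\lam^n})=\theta_2^n\,\Phi^+_{2,\om^{-n}\Tk}(Q_r).
$$
Define $\beta(f):=m_{\Phi^-_2}(f)$, which is a continuous linear functional on $\Fk$ by (\ref{int2}), and the map $\Psi:X_\om\to C[0,1]$, $\Psi(\Tk)(r):=\Phi^+_{2,\Tk}(Q_r)$; set $\nu:=\Psi_*\mu$. When $\int f\,d\mu=0$ and $\beta(f)\ne 0$, the reduction above gives
$$
\frac{\Frs_n[f,\Tk]}{\beta(f)\theta_2^n}=\Psi(\om^{-n}\Tk)+\eps_n,\qquad \|\eps_n\|_{C[0,1]}\to 0,
$$
uniformly in $\Tk$. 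Lemma~\ref{lem-finadd3} (with $r_1=0$, then with arbitrary $0\le r_1<r_2\le 1$) supplies the uniform bound $|\Psi(\Tk)(r)|\le C_3 r^\alpha$ and a uniform H\"older modulus with exponent $\alpha-(d-1)>0$, so by Arzel\`a-Ascoli the image $\Psi(X_\om)$ is relatively compact in $C[0,1]$, making $\nu$ compactly supported. Because $\om$ is a $\mu$-preserving homeomorphism (Lemma~\ref{lem-invmeas} together with Theorem~\ref{th-ucp} in the non-periodic setting), the law of $\Psi(\om^{-n}\Tk)$ is exactly $\nu$ for every $n$; the uniform vanishing of $\eps_n$ then upgrades this to convergence in distribution of $\Frs_n[f,\Tk]/(\beta(f)\theta_2^n)$ to $\nu$ in $C[0,1]$.

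The main obstacle is non-degeneracy of $\nu$, namely that it is not a Dirac mass at any $g\in C[0,1]$. Suppose for contradiction $\Psi(\Tk)=g$ for $\mu$-a.e.\ $\Tk$. Combining (\ref{coc1}) with the translation-invariance of $\mu$ and the continuity in $y$ of $\Phi^+_{2,\Tk}(Q_r+y)$ (obtained by applying estimates analogous to Lemma~\ref{lem-finadd3} to the symmetric difference of shifted cubes), one concludes that for a.e.\ $\Tk$ and every $y\in\R^d$, $\Phi^+_{2,\Tk}(Q_r+y)=g(r)$. Partitioning $Q_{2r}$ into $2^d$ translated copies of $Q_r$ and using finite additivity (boundaries having vanishing $\Phi^+_2$) yields the scaling $g(2r)=2^d g(r)$, while (\ref{coc3}) gives $g(\lam r)=\theta_2 g(r)$. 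Continuity of $g$ combined with $\theta_2<\lam^d=\theta_1$ forces $g\equiv 0$. But $g\equiv 0$ would imply $\Frs_n[f,\Tk](1)/\theta_2^n\to 0$ for every admissible $f$, contradicting the sharpness of Corollary~\ref{th-main1} in the polyhedral setting (assumption (B)), which guarantees that the $R^\alpha$-order of fluctuations is attained for suitable $f$ and $\Tk$. Hence $\nu$ is non-degenerate, completing the proof.
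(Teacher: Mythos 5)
Your first half is essentially the paper's argument, with one worthwhile simplification: after reducing, via Theorem~\ref{th-main2}, to $\Frs_n[f,\Tk](r)/(\beta(f)\theta_2^n)=\Phi^+_{2,\om^{-n}\Tk}(Q_r)+\eps_n(r)$ with $\|\eps_n\|_{C[0,1]}\to0$ uniformly in $\Tk$, you observe that the comparison variable has law exactly $\nu$ for every $n$, so the ``converging together'' lemma yields weak convergence at once. This bypasses the paper's separate tightness step (Lemma~\ref{lem-tight}), which is a second H\"older-type computation for the ergodic integrals themselves; your route is legitimate and cleaner. (Two small points: (\ref{eq-th2}) applies only for $R=r\lam^n\ge2$, so for $r<2\lam^{-n}$ you need the trivial bounds $|\Frs_n[f,\Tk](r)|\le\|f\|_\infty(r\lam^n)^d$ and $|\Phi^+_{2,\Tk}(Q_{r\lam^n})|\le C(r\lam^n)^\alpha$; and your deduction that the two scalings $g(2r)=2^dg(r)$, $g(\lam r)=\theta_2 g(r)$ plus continuity force $g\equiv0$ needs a case analysis on whether $\log2/\log\lam$ is rational, or else the mediation by the H\"older modulus (\ref{eq-hoelder2}) that the paper uses.)

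The genuine gap is in the non-degeneracy argument, in two respects. First, the statement to be proved (see the remark following the theorem) is that for \emph{every} $r_0\in(0,1]$ the marginal $\varphi(r_0)$ is not a.s.\ constant; ruling out only that $\nu$ is a Dirac mass on $C[0,1]$ is strictly weaker, and your argument is structured around the latter. Second, and more seriously, your final contradiction --- ``$g\equiv0$ contradicts the sharpness of Corollary~\ref{th-main1}'' --- appeals to a claim the paper explicitly leaves unproved (it only remarks that sharpness ``is possible to show''), and that claim is essentially equivalent to the non-vanishing you are trying to establish; the natural proof of sharpness in the regime $|\theta_2|>\theta_1^{(d-1)/d}$ goes \emph{through} the non-degeneracy of $\Phi^+_{2,\Tk}$, so the argument is circular. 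The paper's route is different and is where assumption {\bf (B)} actually does its work: assuming $\Phi^+_{2,\Tk}(Q_{r_0})=c$ a.e.\ for a fixed $r_0$, one first forces $c=0$ by comparing $\Phi^+_{2,\Tk}(Q_{kr_0})=k^dc$ with $\Phi^+_{2,\Tk}(Q_{\lam^n r_0})=\theta_2^nc$ at $k=\lfloor\lam^n\rfloor$ via (\ref{eq-hoelder2}); then, with all small grid cubes of measure zero, one approximates the support of a single polyhedral tile by unions of grid cubes, controls the boundary contribution by the Minkowski-content estimates (this is where polyhedrality is used, to bound $\Hk^{d-1}(\partial\Om_n)$ uniformly), and concludes $\Phi^+_{2,\Tk}(A_i-y)=0$, contradicting the definition (\ref{eq-mdef1}), which gives $\Phi^+_{2,\Tk}(A_i-y)=v^{(2)}_i\ne0$ for the eigenvector $v^{(2)}$. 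You need some version of this last step; without it the proof is incomplete.
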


\noindent {\bf Remarks.} Nondegeneracy of the measure means that if 
$\varphi\in C[0,1]$ 
is distributed according to $\nu$, then for any $r_0\in (0,1]$ the distribution of the real-valued random variable $\varphi(r_0)$
is not concentrated at a single point. 

The measure $\nu$ and the functional $\beta(f)$ naturally come from the 2-nd term in the formula (\ref{eq-th2}), with $\Om=Q_1$, 
since the 1-st term in (\ref{eq-th2}) is zero. In other words,
\be \label{def-eta}
\nu= \mbox{the distribution of $r\mapsto \Phi^+_{2,\Tk}(Q_r)$,\ $r\in [0,1]$},
\ee
as a random variable on $(X_\om,\mu)$, and 
\be \label{eq-betaf}
\beta(f)= m_{\Phi^-_2}(f).
\ee
Note that 
$$
|\beta(f)|\le \sum_{i=1}^m |u^{(2)}_i|\cdot \Lk^d(A_i)\cdot \|f\|_\infty
$$
by (\ref{int2}),
so $\beta$ is a continuous functional on $\Fk\subset L^\infty(X_\om)$.

The theorem in the case $d=1$ was established in \cite{Bufetov}, and the general scheme of our proof is similar. However, it should be emphasized
that there are many complications because of the ``boundary effects'' for $d\ge 2$. Note that the assumptions {\bf (A)} and {\bf (B)}
hold in the one-dimensional case (with connected tiles) automatically.

\medskip

\noindent {\em Proof.} 
We are going to use a basic result (see \cite[Th.7.1]{Billing} or \cite{Bogachev}) which says that, given a sequence of probability measures
on $C[0,1]$, if their finite-dimensional distributions converge and the sequence is tight, then the measures converge weakly, which is
equivalent to saying that the random variables converge in distribution. Recall that
a family of probability measures on a separable metric space
is {\em tight} if for every $\eps>0$ there is a compact set such that its complement has measure less than $\eps$
for every measure in the family.

In view of (\ref{eq-th2}) and (\ref{eq-hoelder}), we have for $f\in \Fk$, with $\int_{X_\om} f\,d\mu=0$, by the assumptions on the substitution matrix:
$$
\left|\int_{Q_R} f(\Tk-y)\,dy - \Phi^+_{2,\Tk}(Q_R)\cdot m_{\Phi^-_2}(f)\right| \le 
C(\om,\Om) R^{\alpha-\delta}\|f\|_1,\ \ \mbox{with}\ \alpha=\frac{\log\theta_2}{\log\lam},
$$
for some $\delta\in (0,\alpha)$ and all $R\ge 2$. Therefore, for $f\in \Fk$ with $\beta(f)\ne 0$, by (\ref{eq-betaf}),
\be \label{eq-distrib1}
\left|\frac{\Frs_n[f,\Tk](r)}{\beta(f) \theta_2^n} - \frac{\Phi^+_{2,\Tk}(Q_{r\lam^n})}{\theta_2^n}\right|
\le C(\om,\Om) \lam^{-\delta n},\ \ \mbox{for all $n\in \N$ and $r\in [0,1]$.}
\ee
Note the following important equality, which follows from (\ref{coc3}) and the fact that $\phi(Q_r) = \lam Q_r = Q_{r\lam}$
by the assumption {\bf (A)}:
$$
\Phi^+_{2,\om(\Tk)}(Q_{r\lam}) = \theta_2 \Phi^+_{2,\Tk}(Q_{r}).
$$
Thus,
$$
\frac{\Phi^+_{2,\Tk}(Q_{r\lam^n})}{\theta_2^n} = \Phi^+_{2,\om^{-n}(\Tk)}(Q_r).
$$
Observe that $r\mapsto \Phi^+_{2,\om^{-n}(\Tk)}(Q_r)$ has the distribution of $\nu$ from (\ref{def-eta}) for all $n$, since
$\mu$ is $\om^{-1}$-invariant by Lemma~\ref{lem-invmeas}.
Thus, it follows from (\ref{eq-distrib1}) that the $k$-dimensional distributions of 
$\frac{1}{\beta(f)\theta_2^n}(\Frs_n[f,\Tk](r_1), \ldots, \Frs_n[f,\Tk](r_k))$ converge weakly to the  $k$-dimensional distributions of
$(\Phi^+_{2,\Tk}(Q_{r_1}), \ldots, \Phi^+_{2,\Tk}(Q_{r_k}))$. Further, (\ref{eq-hoelder2}) in Lemma~\ref{lem-finadd3} shows that the
support of $\nu$ is compact in $C[0,1]$ by the Arzel\`a-Ascoli Theorem.
In order to complete the proof, we need to establish (i) tightness; (ii) nondegeneracy of the limit measure $\nu$.

\subsection{Tightness.}
The following lemma will imply that the sequence of distributions of $r\mapsto \theta_2^{-n} \Frs_n[f,\Tk](r)$ is tight, again by  Arzel\`a-Ascoli.
In fact, all the distributions are supported on a single compact set.

\begin{lemma} \label{lem-tight}
There exists $C(\om)$ and $n_0\in \N$ such that for all  $f\in \Fk$ with $\int f\,d\mu=0$, for
all $\Tk\in X_\om$, all $n\ge n_0$, and all $r_1,r_2\in [0,1]$,
\be \label{eq-tight}
\frac{|\Frs_n[f,\Tk](r_2) - \Frs_n[f,\Tk](r_1)|}{\theta_2^n} \le C(X_\om)\|f\|_\infty \cdot |r_2-r_1|^{\alpha-(d-1)},
\ee
where $\alpha = \frac{\log \theta_2}{\log \lam}$.
\end{lemma}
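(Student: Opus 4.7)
Without loss of generality assume $r_1 < r_2$. By additivity of the integral,
$$
\Frs_n[f,\Tk](r_2) - \Frs_n[f,\Tk](r_1) = \int_{A_{r_1\lam^n,\,r_2\lam^n}} f(\Tk-y)\,dy,
$$
the integral of $f$ over a cubical shell. My plan is to compare the annulus thickness $(r_2-r_1)\lam^n$ with the tile scale $1$ (equivalently, compare $r_2-r_1$ with $\lam^{-n}$), and use different arguments in the two regimes. The target H\"older exponent $\alpha-(d-1)$ is positive by the hypothesis $\theta_2>\lam^{d-1}$, and this gap will be crucial throughout.

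In the thin shell regime $r_2-r_1\le\lam^{-n}$, I would use only the crude bound
$$
\bigl|\Frs_n[f,\Tk](r_2) - \Frs_n[f,\Tk](r_1)\bigr| \le \|f\|_\infty\,\Lk^d(A_{r_1\lam^n,\,r_2\lam^n}) \le d\,r_2^{d-1}\,\lam^{nd}\,(r_2-r_1)\,\|f\|_\infty.
$$
Dividing by $\theta_2^n=\lam^{n\alpha}$ and splitting $r_2-r_1=(r_2-r_1)^{\alpha-(d-1)}(r_2-r_1)^{d-\alpha}$, the remaining factor $\lam^{n(d-\alpha)}(r_2-r_1)^{d-\alpha}$ is at most $1$ precisely because $r_2-r_1\le\lam^{-n}$ and $d-\alpha>0$, giving the desired bound.

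In the thick shell regime $r_2-r_1>\lam^{-n}$, I would apply Theorem~\ref{th-main2} at the two scales $R=r_2\lam^n$ and $R=r_1\lam^n$ with $\Om=Q_1$, and subtract. Since $\int f\,d\mu=0$, what remains is
$$
\sum_{n'=2}^\ell \bigl[\Phi^+_{n',\Tk}(Q_{r_2\lam^n}) - \Phi^+_{n',\Tk}(Q_{r_1\lam^n})\bigr]\,m_{\Phi^-_{n'}}(f) + E,
$$
with $|E|\lesssim \lam^{n(d-1)} n^s \|f\|_\infty$. For the dominant $n'=2$ term, Lemma~\ref{lem-finadd3} gives
$$
\bigl|\Phi^+_{2,\Tk}(Q_{r_2\lam^n}) - \Phi^+_{2,\Tk}(Q_{r_1\lam^n})\bigr| \le C_3\,(r_2\lam^n)^{d-1}\bigl((r_2-r_1)\lam^n\bigr)^{\alpha-(d-1)} = C_3\,r_2^{d-1}(r_2-r_1)^{\alpha-(d-1)}\theta_2^n,
$$
and combined with $|m_{\Phi^-_2}(f)|\lesssim\|f\|_\infty$ this yields the required estimate. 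Subdominant eigenvalues $\theta_{n'}$, $3\le n'\le\ell$, would be handled by analogs of Lemma~\ref{lem-finadd3} (with possible logarithmic factors from Jordan blocks); the factor $(|\theta_{n'}|/\theta_2)^n=\lam^{n(\alpha_{n'}-\alpha)}$ is exactly compensated by $(r_2-r_1)^{\alpha_{n'}-\alpha}\le \lam^{n(\alpha-\alpha_{n'})}$ in this regime. Finally, $|E|/\theta_2^n=\lam^{-n(\alpha-(d-1))}n^s\|f\|_\infty$ is dominated by $(r_2-r_1)^{\alpha-(d-1)}\|f\|_\infty$ once $n\ge n_0$, with $n_0$ chosen large enough to absorb the polynomial $n^s$ into the exponential gain.

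\noindent\textbf{Expected main obstacle.} The delicate point is that every term must be bookkept against the same reference scale $\theta_2^n(r_2-r_1)^{\alpha-(d-1)}\|f\|_\infty$, and the breakpoint $r_2-r_1\sim\lam^{-n}$ of the dichotomy is forced by the exponent $\alpha-(d-1)$ itself. The argument succeeds because of the strict inequality $\alpha>d-1$, which provides an exponential gap $\lam^{-n(\alpha-(d-1))}$ that simultaneously absorbs (i) the polynomial-in-$n$ loss from Theorem~\ref{th-main2}'s error term, (ii) the losses from each subdominant eigenvalue, and (iii) minor edge cases such as $r_1$ near $0$ or $r_i\lam^n$ near $2$ where Theorem~\ref{th-main2} does not directly apply (these can be handled as in the thin shell regime).
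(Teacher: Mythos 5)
Your thin-shell case ($r_2-r_1\le\lam^{-n}$) is exactly the paper's argument and is correct. In the thick-shell case your architecture is close to workable --- the main term handled via Lemma~\ref{lem-finadd3}, the subdominant eigenvalues compensated by $(\lam^n(r_2-r_1))^{\alpha_{n'}-\alpha}\le 1$ --- but the treatment of the error term $E$ contains a step that fails. You claim that $|E|/\theta_2^n\lesssim \lam^{-n(\alpha-(d-1))}n^s\|f\|_\infty$ is dominated by $(r_2-r_1)^{\alpha-(d-1)}\|f\|_\infty$ ``once $n\ge n_0$.'' But the only lower bound on $(r_2-r_1)^{\alpha-(d-1)}$ available in this regime is $\lam^{-n(\alpha-(d-1))}$, attained up to a constant when $r_2-r_1$ sits at the breakpoint $\lam^{-n}$; for such pairs the required inequality reads $n^s\le C$, which is false for $s\ge1$ no matter how large $n_0$ is. The estimate must hold uniformly over $r_1,r_2$ for each fixed $n$, so there is no leftover exponential gain into which $n^s$ can be absorbed. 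Since the hypotheses of Theorem~\ref{th-limitlaw} do not exclude eigenvalues of modulus exactly $\lam^{d-1}$ (which is precisely what produces $s\ge1$ in Theorem~\ref{th-main2}), this is a genuine gap; your argument closes only when $s=0$.

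The paper avoids this by not invoking Theorem~\ref{th-main2} as a black box: it redoes the tile count directly on the annulus $Q_{\lam^nr_2}\setminus Q_{\lam^nr_1}$. The Perron--Frobenius component vanishes because $\int f\,d\mu=0$, and \emph{all} remaining spectral components --- including any Jordan blocks at modulus $\lam^{d-1}$ --- are lumped together using $\|(\Sf^t)^kv^{(s)}\|\le C\theta_2^k$, valid because every non-Perron eigenvalue has modulus strictly smaller than $\theta_2$ under the standing hypotheses. Combined with $\#\Rk^{(k)}\lesssim\lam^{(d-1)(n-k)}$ and the geometric constraint $\eta\lam^{k_0}\le\lam^n(r_2-r_1)$ on the largest order $k_0$ of tiles fitting inside the annulus, the resulting geometric series gives $(\theta_2/\lam^{d-1})^{k_0-n}\le((r_2-r_1)/\eta)^{\alpha-(d-1)}$ with no logarithmic loss. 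Replacing your black-box application of Theorem~\ref{th-main2} by this annulus-level counting repairs the proof; the rest of your outline, including the boundary-layer error of size $O(\lam^{n(d-1)}\|f\|_\infty)$ and the edge case $r_1\lam^n<2$, is consistent with what the paper does.
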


\begin{proof}
Let $r_1 < r_2$. 
We have
$$
\Frs_n[f,\Tk](r_2) - \Frs_n[f,\Tk](r_1) = \int_{Q_{\lam^n r_2} \setminus Q_{\lam^n r_1}} f(\Tk-y) \,dy =:\Ik.
$$
By the definition of cylindrical functions, there exist $\psi_i \in L^\infty(A_i)$, $i\le m$, such that
$$
f(\Tk)= \psi_i(x)\ \ \mbox{iff}\ \ 0\in \Int(A_i-x),\ T_i-x\in \Tk
$$
(we can, of course, ignore the case when $x$ belongs to the boundary of a tile, since the boundary has measure zero).
Then we have, similarly to (\ref{eq-integral}):
$$
\Ik =\sum_{i=1}^m\ 
 \sum_{x:\, (A_i-x) \cap (Q_{\lam^n r_2} \setminus Q_{\lam^n r_1})\ne \es}\ \ \int_{(A_i-x) \cap (Q_{\lam^n r_2} \setminus Q_{\lam^n r_1})}
\psi_i(y+x)\,dy,
$$
where the inside sum is over $x$ such that $T_i-x\in \Tk$.
For $n\in \N$ such that $\lam^n(r_2-r_1) \le 1$, we estimate $\Ik$ as follows, keeping in mind that $\|f\|_\infty = \max_i \|\psi_i\|_\infty$:
\begin{eqnarray}
|\Ik|\le \Lk^d(Q_{\lam^n r_1} \setminus Q_{\lam^n r_2}) \cdot\|f\|_\infty & = & [(\lam^n r_2)^d-(\lam^n r_1)^d]\cdot\|f\|_\infty \nonumber\\
              & = & \lam^{nd}(r_2^d-r_1^d) \cdot\|f\|_\infty \nonumber \\
              & \le &  d\lam^{nd}(r_2-r_1) \cdot\|f\|_\infty. \label{eq-dur1}
\end{eqnarray}
Observe that
$$
\left(\frac{\lam^d}{\theta_2}\right)^n = \lam^{n(d - \frac{\log\theta_2}{\log\lam})}\le (r_2-r_1)^{\alpha-d},
$$
by the assumption $\lam^n \le (r_2-r_1)^{-1}$, keeping in mind that $\alpha = \frac{\log\theta_2}{\log\lam}$. Thus, by (\ref{eq-dur1}),
$$
\frac{|\Ik|}{\theta_2^n} \le d\|f\|_\infty \left(\frac{\lam^d}{\theta_2}\right)^n (r_2-r_1) \le d\|f\|_\infty (r_2-r_1)^{\alpha-(d-1)},
$$
which yields (\ref{eq-tight}) for such $n$. 

For $n\in \N$ such that $\lam^n(r_2-r_1) >1$, we proceed similarly to the proof of Theorem~\ref{th-main2} and estimate
\begin{eqnarray*}
& & \left|\Ik - \sum_{i=1}^m N_{T_i}(Q_{\lam^n r_2} \setminus Q_{\lam^n r_1})\cdot \Lk^d(\psi_i)\right|\\
& \le & \Lk^d(U(\partial Q_{\lam^n r_1}\cup \partial Q_{\lam^n r_2}, d_{\max}))
\cdot a_{\min}^{-1} a_{\max} \|f\|_\infty,
\end{eqnarray*}
where $a_{\max}$ is the maximal volume of a $\Tk$ prototile.
By (\ref{eq-claim}), 
\begin{eqnarray*}
\Lk^d(U(\partial Q_{\lam^n r_1}\cup \partial Q_{\lam^n r_2}, d_{\max})) & \le & d\, 2^{d+1} d_{\max} ((\lam^n r_1)^{d-1} + (\lam^n r_2)^{d-1})\\
                & \le & d\, 2^{d+2} d_{\max} \lam^{n(d-1)}.
\end{eqnarray*}
We have
$$
\left(\frac{\lam^{d-1}}{\theta_2}\right)^n = \lam^{-n(\alpha-(d-1))}< (r_2-r_1)^{\alpha-(d-1)}
$$
by the assumption $\lam^n(r_2-r_1) >1$. Therefore, 
$$
\theta_2^{-n}
\left|\Ik - \sum_{i=1}^m N_{T_i}(Q_{\lam^n r_2} \setminus Q_{\lam^n r_1})\cdot \Lk^d(\psi_i)\right| 
\le \const\cdot \|f\|_\infty\cdot (r_2-r_1)^{\alpha-(d-1)},
$$
with the constant depending only on $X_\om$,
and it remains to estimate 
$$
\theta_2^{-n} \sum_{i=1}^m N_{T_i}(Q_{\lam^n r_2} \setminus Q_{\lam^n r_1}) \cdot \Lk^d(\psi_i).
$$
This is done similarly to (parts of) the proof of Theorem~\ref{th-main2}, with some elements from the proof of Lemma~\ref{lem-finadd3}.
We proceed to the formal estimate.

Consider $\Rk^{(k)} = \Rk^{(k)}(Q_{\lam^n r_2} \setminus Q_{\lam^n r_1})$, and let $k_0= \max\{k:\  \Rk^{(k)}\ne \es\}$. Further, let
$\Rk_j^{(k)}$ be the collection of tiles of type $j$ in $\Rk^{(k)}$.
For $i\le m$, using (\ref{106}) and (\ref{102}), we have,
similarly to (\ref{1071}),
\begin{eqnarray*}
N_{T_i}(Q_{\lam^n r_2} \setminus Q_{\lam^n r_1}) & = & \sum_{k=0}^{k_0} N_{T_i} (\supp(\Rk^{(k)}))  \\
               & = & \sum_{k=0}^{k_0} \sum_{j=1}^m \#\Rk^{(k)}_j
                      \sum_{s=1}^m u^{(s)}_i ((\Sf^t)^k  v^{(s)})_j \\
     & = & \left(\sum_{s=1}^1 + \sum_{s=2}^m \right) u^{(s)}_i \sum_{k=0}^{k_0} \sum_{j=1}^m
                      \#\Rk^{(k)}_j ((\Sf^t)^k  v^{(s)})_j \\
     & =:& I_1^{(i)} + I_2^{(i)}. 
\end{eqnarray*}
Now, 
$$
I_1^{(i)} = u_i^{(1)}\Lk^d(\supp(\Tk|_{Q_{\lam^n r_2} \setminus Q_{\lam^n r_1}})),
$$
hence
$$
\sum_{i=1}^m I_1^{(i)}\cdot \Lk^d(\psi_i) =0,
$$
in view of $\int f\,d\mu = \sum_{i=1}^m u_i^{(1)} \cdot \Lk^d(\psi_i)=0$.
Next,
\begin{eqnarray*}
|I_2^{(i)}| & \le & \sum_{s=2}^m \|u^{(s)}\|\sum_{k=0}^{k_0} \#\Rk^{(k)}\|(\Sf^t)^k v^{(s)}\| \\
            & \le & C_4 \sum_{k=0}^{k_0} \#\Rk^{(k)} \theta_2^k,
\end{eqnarray*}
by the assumptions on the matrix $\Sf^t$, where the constant $C_4>0$ depends only on the tiling space.
We have
$$
\#\Rk^{(k)} \le C(d,X_\om)(\lam^n r_2)^{d-1} \lam^{-(d-1)k} \le C(d,X_\om) \lam^{(d-1)(n-k)}
$$
by (\ref{108}), hence
\be \label{last1}
\theta_2^{-n} \left| \sum_{i=1}^m I_2^{(i)}\cdot \Lk^d(\psi_i)\right| \le C_5\left(\frac{\theta_2}{\lam^{d-1}}\right)^{k_0-n}\|f\|_\infty,
\ee
with a constant $C_5>0$ that depends only on the tiling space.
Recall that $\eta\lam^{k_0} \le \lam^n(r_2-r_1)$, where $\eta$ is the radius of a ball contained in every $\Tk$ prototile.
Thus $\lam^{k_0-n} \le \frac{r_2-r_1}{\eta}$, hence
the right-hand side of (\ref{last1}) is bounded above by
$$
C_5 \left(\frac{r_2-r_1}{\eta} \right)^{\alpha-(d-1)}.
$$
Now, combining everything together, we obtain the desired estimate.
\end{proof}

\subsection{Nondegeneracy of the limiting measure.}
It remains to prove that $\nu$ is non-trivial and non-degenerate for every $r\in (0,1]$. Assume, to the contrary, that for some $r$
we have $\Phi^+_{2,\Tk}(Q_r)=c$ for $\mu$-a.e.\ $\Tk\in X_\om$. By Fubini, we can find $\Tk\in X_\om$ such that
\be \label{kuk1}
\forall\,x\in \Q^d,\ \forall\,n\in \Z,\ \Phi^+_{2,\om^{-n}(\Tk-rx)}(Q_r)= c.
\ee
Here we use that $\mu$ is invariant under translations and under the action of $\om^{-1}$.
By (\ref{coc1}), we obtain that $\Phi^+_{2,\Tk}(Q_r+x)=c$ for all $x\in \Z^d$, and then by finite additivity,
$$
\Phi^+_{2,\Tk}(Q_{kr})=k^2 \Phi^+_{2,\Tk}(Q_r)=k^2c\ \ \mbox{for}\ k\in \N,
$$
decomposing the larger cube into the union of disjoint translates of $Q_r$. On the other hand,
$$
\Phi^+_{2,\Tk}(Q_{\lam^n r}) = \theta_2^n \Phi^+_{2,\om^{-n}(\Tk)}(Q_r)=\theta_2^n c
$$
by (\ref{coc3}). Now take $k=\lfloor \lam^n \rfloor$ and observe that
$$
|\Phi^+_{2,\Tk}(Q_{\lam^n r})-\Phi^+_{2,\Tk}(Q_{kr})|\le \const\cdot \lam^{n(d-1)}
$$
by (\ref{eq-hoelder2}). 
This implies that $c=0$; otherwise, we get a contradiction for $n$ sufficiently large, keeping in mind that
$\lam^{d-1}< \theta_2$.

Now suppose  $c=0$. Then $\Phi^+_{2,\Tk}(Q_{k^{-1}r}-rx)=0$ for $k\in \Nat$ and $x\in \Q^d$ by the argument as above. Then we can approximate
supports of the tiles of $\Tk$ by the unions of such cubes to conclude that they also have zero $\Phi^+_{2,\Tk}$-measure. But this is a 
contradiction, since $\Phi^+_{2,\Tk}(A_i-y) = v_i$, the $i$-th component of the eigenvector of $\Sf^t$ corresponding to $\theta_2$, if
$T_i-y\in \Tk$.

Let us explain this more carefully. It is only here that we are using the assumption that the prototiles are polyhedral. Fix a tile $T_i-y\in \Tk$ and
denote by $\Om_n$ the union of ``grid cubes'' $2^{-n}(Q_r-rx)$, with $x\in \Z^d$, whose closure is contained in the interior of 
$A_i-y$.
Then $V_n:=(A_i-y)\setminus \Om_n$ is a Lipschitz domain and $\Phi^+_{2,\Tk}(\Om_n)=0$ by the argument above.
We essentially repeat the arguments from Lemma~\ref{lem-finadd2} and Lemma~\ref{lem-finadd3} and start by writing
\be \label{eq-ult2}
\Phi^+_{2,\Tk}(V_n) = \sum_{k=-\infty}^{k_0} \sum_{T\in \Rk^{(k)}(V_n)}\Phi^+_{2,\Tk}(\supp(T)),
\ee
where $k_0 = \max\{k:\ \Rk^{(k)}(V_n)\ne \es\}$. Next,
\be \label{eq-ult1}
\#\Rk^{(k)}(V_n) \le \Lk^d(U(\partial V_n, d_{\max}\lam^{k+1}))a_{\min}^{-1} \lam^{-dk}.
\ee
By construction, $\Int(A_i-y) \subset U(\Om_n,2^{-n}r\sqrt{d})$, hence
\be \label{eq-ult3}
\lam^{k_0}\eta \le 2^{-n}r\sqrt{d},
\ee
where $\eta$ is the diameter of a ball contained in every $\Tk$ prototile, thus $d_{\max}\lam^{k+1}\le b_1\cdot 2^{-n}r$
for $k\le k_0$ for some $b_1$ independent of $n$. An elementary argument (see \cite[Lemma 2.2]{Lacz})
 shows that for any union $F$ of lattice cubes in $Z^d$ we have
\be \label{eq-gmt2}
\Lk^d(U(\partial F,t))\le 2(1+2b_1)^{d-1}t\,\Hk^{d-1}(\partial F),\ \ \ t\in (0,b_1],
\ee
where $\Hk^{d-1}(\partial F)$ is just the surface area of the boundary. Indeed, for every face of $\partial F$ (say, with the
``vertical'' normal), consider
the ``parallelepiped neighborhood'' of the face, with the vertical side length equal to $2t$ and the other $(d-1)$
sides of length $1+2b_1$. 
Clearly, it contains the Euclidean neighborhood of the face of radius $t$ for all $t\le b_1$, and the inequality (\ref{eq-gmt2})
follows. Scaling by $2^{-n}r$, we obtain
$$
\Lk^d(U(\partial \Om_n,t))\le 2(1+2b_1)^{d-1} t \,\Hk^{d-1}(\partial \Om_n),\ \ \ t\in (0, b_1\cdot 2^{-n}r].
$$
Therefore, for large $n$, such that $d_{\max}\lam^{k_0+1}\le 1$, we have, in view of (\ref{eq-gmt}),
\begin{eqnarray*}
\Lk^d(U(\partial V_n,d_{\max}\lam^{k+1})) & \le & \Lk^d(U(\partial A_i, d_{\max}\lam^{k+1})) +
\Lk^d(U(\partial \Om_n,d_{\max}\lam^{k+1})) \\
& \le & C(\partial A_i,1) d_{\max}\lam^{k+1} \\
& + & 2(1+2b_1)^{d-1} d_{\max}\lam^{k+1} \Hk^{d-1}(\partial \Om_n).
\end{eqnarray*}
It is clear that $\Hk^{d-1}(\partial \Om_n)$ are uniformly bounded in $n$, since $A_i-y$ is polyhedral, and $\Om_n$ is its
approximation by a union of $2^{-n}r$-grid cubes. It follows that 
$$
\Lk^d(U(\partial V_n,d_{\max}\lam^{k+1}))\le b_2 \lam^k,\ \ \forall\,k\in \Z,\ k\le k_0,
$$ 
hence, by (\ref{eq-ult1}),
$$
\#\Rk^{(k)}(V_n) \le b_3 \lam^{-(d-1)k},\ \ \forall\,k\in \Z,\ k\le k_0.
$$
Finally, by (\ref{eq-ult2}) and (\ref{eq-ult3}),
\begin{eqnarray*}
|\Phi^+_{2,\Tk}(V_n)| & \le & b_4 \sum_{k=-\infty}^{k_0} \lam^{-(d-1)k}\theta_2^k \\
                      & \le & b_5 \left(\frac{\theta_2}{\lam^{d-1}}\right)^{k_0} 
                       \le  b_5 (2^{-n}r\sqrt{d})^{\alpha-(d-1)}.
\end{eqnarray*}
Since the latter tends to zero as $n\to \infty$ we obtain that 
$$
\Phi^+_{2,\Tk}(A_i-g)=\Phi^+_{2,\Tk}(V_n)+\Phi^+_{2,\Tk}(\Om_n) = \Phi^+_{2,\Tk}(V_n)=0, 
$$which is a contradiction. The theorem is proved completely.
\qed

\medskip

\noindent {\bf Acknowledgement.}
This project was started when both of us were visiting
MPIM Bonn, for whose warm hospitality we are deeply grateful.


\begin{thebibliography}{99}

\bibitem{Adamcz} B. Adamczewski, Symbolic discrepancy and self-similar dynamics, {\em Ann.\ Inst.\ Fourier (Grenoble)} {\bf 54} (2004), no.\ 7, 
2201--2234.

\bibitem{ACG} J. Aliste-Prieto, D. Coronel, J.-M. Gambaudo, Rapid convergence to frequency for substitution tilings of the plane,
{\em Comm.\ Math.\ Phys.} {\bf 306} (2011), no.\ 2, 365--380. 

\bibitem{ACG2} J. Aliste-Prieto, D. Coronel, J.-M. Gambaudo,
Linearly repetitive Delone sets are rectifiable, {\em Preprint} arXiv:1103.5423, 2011.

\bibitem{AP} 
\newblock J. Andersen and I. Putnam,
\newblock Topological invariants for substitution tilings and their associated $C^*$-algebras,
\newblock {\em Ergodic Theory Dynam.\ Systems} {\bf 18} (1998), 509--537.

\bibitem{BBG}
J. Bellissard, R. Benedetti, and J.-M. Gambaudo, Spaces of tilings, finite telescopic approximations and gap-labeling,
{\em  Comm. Math. Phys.}  {\bf 261}  (2006),  no.\ 1, 1--41.

\bibitem{BG} 
\newblock R. Benedetti and J.-M. Gambaudo,
\newblock On the dynamics of $\mathbb G$-solenoids. Applications to Delone sets,
\newblock {\em Ergodic Theory Dynam.\ Systems} {\bf 23} (2003), 673--691.

\bibitem{Billing} P. Billingsley, {\em Convergence of probability measures}, Wiley, 1999.

\bibitem{Bogachev} V. I. Bogachev, {\em Measure Theory}, Springer-Verlag, 2007.

\bibitem{Bufetov0} A. Bufetov, Suspension flows over Vershik's automorphisms, {\em Preprint} arXiv:0804.3970v1, 2008.

\bibitem{Bufetov} A. Bufetov, Finitely-additive measures on the asymptotic foliations of a Markov compactum, {\em Preprint}
arXiv:0902.3303, 2009.

\bibitem{Bufetov1} A. Bufetov, Limit theorems for translation flows, {\em Preprint} arXiv:0804.3970v4, 2011.

\bibitem{BuKl} D. Burago and B. Kleiner, Separated nets in Euclidean space and Jacobians of bi-Lipschitz maps, {\em Geom.\ Func.\ Anal.} {\bf 8} 
(1998), no.\ 2, 273--282.

\bibitem{Danzer} L. Danzer,
Inflation species of planar tilings which are not of locally finite complexity,
{\em Proc.\ Steklov Inst.\ Math.} {\bf 239} (2002), no.\ 4, 108--116.

\bibitem{DKT} J.-M. Dumont, T. Kamae, and S. Takahashi, Minimal cocycles with the scaling property and substitutions, {\em Israel J. Math.}
{\bf 95} (1996), 393--410.

\bibitem{federer} H. Federer, {\em Geometric Measure Theory}, Springer-Verlag, 1969.

\bibitem{Forni} G. Forni, Deviation of ergodic average for area-preserving flows on surfaces of higher genus, {\em Ann. of Math} (2) {\bf 146} (1997),
no.\ 2, 295--344. 

\bibitem{FraRob} N. P. Frank and E. A. Robinson, Jr.,
{\em Generalized $\beta$-expansions, substitution tilings, and
local finiteness}, Trans.\ Amer.\ Math.\ Soc. {\bf 360} (2008),
no.\ 3, 1163--1177.

\bibitem{FraSad} N. P. Frank and L. Sadun, 
{\em Topology of some tiling spaces without finite local complexity},
Discrete Contin.\ Dyn.\ Syst. {\bf 23} (2009), no.\ 3, 847--865. 

\bibitem{Gamb} 
\newblock J.-M. Gambaudo,
\newblock A note on tilings and translation surfaces,
\newblock  {\em Ergodic Theory Dynam.\ Systems} {\bf 26}  (2006), 179--188.

\bibitem{Gott} W. H. Gottschalk, Orbit-closure decompositions and almost
periodic properties, {\em Bull.\ Amer.\ Math.\ Soc.} {\bf 50} (1944), 915--919.

\bibitem{Kamae} T. Kamae, Linear expansions, strictly ergodic homogeneous cocycles and fractals, {\em Israel J. Math.} {\bf 106} (1998), 313--337.

\bibitem{Ken1} R. Kenyon, Self-replicating tilings,
in {\em Symbolic dynamics and its applications (New Haven, CT, 1991)},
239--263, Contemp.\ Math., Vol.\ 135,
Amer.\ Math.\ Soc., Providence, RI, 1992.

\bibitem{Lacz} M. Laczkovich, Uniformly spread discrete sets in $\R^d$, 
{\em J.\ London Math.\ Soc.} (2) {\bf 46} (1992), no.\ 1, 39--57. 

\bibitem{LMS2} J.-Y. Lee, R. V. Moody, and B. Solomyak,
Consequences of Pure Point Diffraction Spectra for Multiset Substitution
Systems, {\em Discrete Comput.\ Geom.} {\bf 29} (2003), 525--560.

\bibitem{mattila} P. Mattila, {\em 
Geometry of sets and measures in Euclidean spaces. Fractals and rectifiability.} Cambridge University Press, Cambridge, 1995.

\bibitem{prag} B. Praggastis, Numeration systems and Markov partitions from
self similar tilings, {\em Trans.\ Amer.\ Math.\ Soc.} {\bf 351} (1999), no.\
8, 3315--3349.

\bibitem{radin} C. Radin, The pinwheel tilings of the plane,
{\em Ann.\ of Math.} (2) {\bf 139} (1994), no.\ 3, 661--702.

\bibitem{Robi}  E. A. Robinson, Jr.,
Symbolic dynamics and tilings of $\R^d$, in
{\em  Symbolic dynamics and its applications}, Proc.\ Sympos.\ Appl.\ Math.,
Vol.\ 60, Amer.\ Math.\ Soc., Providence, RI, 2004, pp.\ 81--119.

\bibitem{Rud} D. J. Rudolph,
Markov tilings of $\R^n$ and representations of $\R^n$ actions, in
{\em  Measure and measurable dynamics (Rochester, NY, 1987)},
Contemp.\ Math., Vol.\ 94, Amer.\ Math.\ Soc., Providence, RI, 1989, pp.\
271--290.

\bibitem{Sadun-book} L. Sadun, {\em Topology of tiling spaces}, University Lecture Series, {\bf 46}, Amer. Math. Soc., Providence, RI, 2008.

\bibitem{Sadun} L. Sadun, Exact regularity and the cohomology of tiling spaces, {\em Ergodic Theory Dynam.\ Systems} 
{\bf 31} (2011), 1819--1834.

\bibitem{Solomon1} Y. Solomon, Substitution tilings and separated nets with similarities to the integer lattice, 
{\em Israel J.\ Math.} {\bf 181} (2011), 445--460. 

\bibitem{Solomon2} Y. Solomon, A Simple Condition for Bounded Displacement, {\em Preprint}  arXiv:1111.1690, 2011.

\bibitem{So1} B. Solomyak,
       Dynamics of self-similar tilings,
     {\em  Ergodic Th.\ and Dynam.\ Sys.} {\bf 11} (1997), 695--738.


\bibitem{Sol-ucp} B. Solomyak, Nonperiodicity implies
unique composition for self-similar translationally finite tilings,
{\em Discrete Comput.\ Geom.} {\bf 20} (1998), no.\ 2, 265--279.


\bibitem{SolJap} B. Solomyak, 
Eigenfunctions for substitution tiling systems, in {\em Probability and number theory (Kanazawa 2005)}, 
Adv.\ Stud\ Pure Math., Vol.\ 49, Math.\ Soc.\ Japan, Tokyo, 2007, pp.\ 433--454.

\bibitem{Vershik} A. M. Vershik, A theorem on Markov periodic approximation in ergodic theory (Russian), in
{\em Boundary value problems of mathematical physics and related questions in the theory of functions, 14. Zap. Nauchn. Sem. Leningrad. Otdel. Mat. Inst. Steklov. (LOMI)} {\bf 115} (1982), 72--82, 306. 

\bibitem{VerLiv} A. M. Vershik and A. N. Livshits, Adic models of ergodic transformations, spectral theory, substitutions, and related topics,
in {\em Representation theory and dynamical systems,} 185--204, {\em Adv. Soviet Math.} {\bf 9}, Amer. Math. Soc., Providence, RI, 1992.

\bibitem{Zorich} A. Zorich, Deviation for interval exchange transformations, {\em Ergodic Theory Dynam.\ Sys.} {\bf 17} (1997), no.\ 6, 1477--1499.

\end{thebibliography}
\end{document}